\tikzset{>=stealth'}
\def\beq{\arraycolsep1pt\begin{eqnarray*}}
	\def\eeq{\end{eqnarray*}}
\newcommand{\sign}{{\rm{sign}}}
\newtheorem{theorem}{Theorem}[section]
\newtheorem*{theorem*}{Theorem}
\newtheorem{lemma}[theorem]{Lemma}
\newtheorem{proposition}[theorem]{Proposition}
\newtheorem{corollary}[theorem]{Corollary}
\newtheorem*{definition*}{Definition}
\newtheorem{remark}[theorem]{Remark}
\title{On the Lambert problem with drag}
\author{Antonio J. Ure\~na\\ \\
Departamento de Matem\'atica Aplicada, Facultad de Ciencias, \\
Universidad de Granada, 18071, Granada, Spain}
\date{}
\begin{document}

	\maketitle


	\begin{abstract}
		The Lambert problem consists in connecting two given points in a given lapse of time under the gravitational influence of a fixed center. While this problem is very classical, we are concerned here with situations where friction forces act alongside the Newtonian attraction. Under some boundedness assumptions on the friction, there exists exactly one rectilinear solution if the two points lie on the same ray, and at least two solutions travelling in opposite directions otherwise.
	\end{abstract}

\ \ \ {\em Keywords and phrases:} Kepler problem, Dirichlet boundary conditions, friction

\section{Introduction}

Finding a solution of the Kepler problem from two specified times and the corresponding respective positions is usually referred to as the {\em Lambert problem}. In other terms, it is the combination of the Kepler problem with Dirichlet boundary conditions.

\medbreak

 The history of this problem goes back in  time to the dawn of Celestial Mechanics, having been briefly mentioned by Lambert in a letter to Euler \cite{Bop}, and subsequently by Lagrange in his {\em Mécanique Analytique} \cite[\S 34, p. 39]{Lag}. 
 In Gauss' {\em Theoria Motus} \cite[\S 84, p. 108]{Gau} we read 
\begin{quote} 
  `Hence, inversely, it is apparent that  two radii vectors given in magnitude and position, together with the time in which the heavenly body describes the intermediate space, determine the whole orbit. But this problem, to be considered among the most important in the theory of the motions of the heavenly bodies, is not so easily solved, since the expression of the time in terms of the elements is transcendental, and moreover, very complicated.'
 \end{quote}
 \medbreak
 
 Fast forward to the second half of the twentieth century. In the sixties the development of computers and the needs of the aerospace industry gave rise to an important literature on numerical iterative algorithms designed to approximate solutions of the Lambert problem \cite{Eli, LanBla, Goo}. From a more theoretical point of view, the first results on existence and uniqueness are due to Sim\'o \cite{Sim}, whose approach was based on the Levi-Civita transformation. More recently, Albouy \cite[\S 38]{Alb}, \cite{Alb2}, has resorted to a related result, the so-called {\em Lambert theorem} to throw some new light on  Sim\'o's result. See also \cite{AlbUre1, AlbUre2}.

\medbreak

If the particle moves in the vacuum and is not affected by any forces other than the gravity of the fixed center, the problem is integrable, a fact already known to Newton. If on the other hand our particle crosses a cloud of gas or dust (or is so close to the Earth that it interacts with its atmosphere), then one should take into account the influence of the drag. Friction forces in Celestial Mechanics have also a tradition spanning for centuries. Their effect was already studied by Euler \cite{Eul} or Poincar\'e \cite[Chapter VI]{Poi}, but research in this direction continues to this day \cite{Sto,Cel,MarOrtReb, MarOrtReb2,Har}. See also the recent work \cite{Pan}, which studies numerically the Lambert problem in a frictional environment. 

\medbreak

Many forms of friction make sense from a physical point of view. See, e.g., the discussion in \cite[p. 266-267]{MarOrtReb2}.  In this paper we shall always assume that the friction force is linear in the velocity and acts in the opposite direction of motion. On the other hand, its intensity may depend in a complicated way on the position of the particle, especially if the environment is heterogeneous. Mathematically we are led to a system of differential equations of the form
\begin{equation*}
\label{eu1}\ddot x+D(x)\dot x=-\frac{x}{|x|^3}\,,\qquad x\in\mathbb R^2\backslash\{0\}\,,\tag{$K$}
\end{equation*}
where $D:\mathbb R^2\backslash\{0\}\to\mathbb R$ is nonnegative and continuously differentiable. Unless explicitly stated otherwise, solutions of \eqref{eu1} are understood in a classical sense, i.e., they are assumed to be twice continuously differentiable and avoid the collision singularity.

\medbreak

Given a solution $x=x(t)$ of \eqref{eu1}, its angular momentum $c(t):=\det(x(t),\dot x(t))$ satisfies the first-order linear equation $\dot c=-D(x(t))c$ and therefore, it cannot change sign. Passage to polar coordinates  $x=r(\cos\theta,\sin\theta)$ leads to the classical equality  $c=r^2\dot\theta$, and consequently the sign of $c$ divides the set of solutions of (\ref{eu1}) into three nonoverlapping classes: {\em solutions living in a ray} (or rectilinear), {\em solutions rotating counterclockwise, and solutions rotating clockwise}. 

\medbreak

Assuming that our solution is defined on the time interval $[-T,0]$, it will be called an {\em arc from $A:=x(-T)$ to $B:=x(0)$} provided that $|\theta(0)-\theta(-T)|<2\pi$. (For reasons that will be clear below we name the time interval $[-T,0]$ instead of the more conventional option $[0,T]$). 
On the other hand, the number $T$ is usually referred to as the {\em flight (or transfer) time}  of the arc. 

\medbreak

 We are now ready to formulate our problem in a more precise way. {\em Given points $A,B\in\mathbb R^2\backslash\{0\}$, and given some flight time $T>0$, are there arcs from $A$ to $B$ having flight time $T$?} With other words, we are concerned with the Dirichlet problem arising from the combination of \eqref{eu1} with the boundary conditions 
  \begin{equation}\label{bc}
 x(-T)=A,\qquad x(0)=B\,,\tag{$BC$}
 \end{equation}
 focusing our attention on solutions rotating for less than one tour on the given time interval.  Our precise assumptions on the (nonnegative, continuously differentiable) friction coefficient $D=D(x)$ will be as follows:
 \begin{itemize}
 	\item[{\bf [}{\bf D$_1$]}]\qquad $D:\mathbb R^2\backslash\{0\}\to\mathbb R$ is bounded. 
 	\item[{\bf [}{\bf D$_2$]}]\qquad $\lim_{x\to 0}\sqrt{|x|}\,\nabla D(x)=0$.
\end{itemize}
  The main result of this paper is given below:
 \begin{theorem}\label{th1}
 	{Assuming {\bf [D$_{1-2}$]}, fix some flight time $T>0$ and points $A,B\in\mathbb R^2\backslash\{0\}$. Then:
 		\begin{enumerate}
 			\item[(a)] If $A$ and $B$ lie on the same ray starting at the origin then there exists a unique rectilinear arc going from $A$ to $B$ in the flight time $T$.
 			\item[(b)] If $A,B$ do not lie on the same ray starting at the origin then there exists at least one arc from $A$ to $B$ with flight time $T$ and rotating clockwise, and at least one arc from $A$ to $B$ with flight time $T$ and rotating counterclockwise.
 		 		
 		 		\end{enumerate}} 
 \end{theorem}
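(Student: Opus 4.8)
\emph{Strategy.} The plan is to regularize the Kepler collision so that equation \eqref{eu1} gives rise to a flow that is complete on the time interval $[-T,0]$, and then to solve the boundary value problem \eqref{bc} by shooting: an intermediate value argument for part (a) and a degree (winding number) argument for part (b). The a priori control is furnished by the hypotheses. Since $D\ge 0$, the energy $\tfrac12|\dot x|^2-|x|^{-1}$ is nonincreasing along solutions; hence, as long as $|x|$ is bounded below, $|\dot x|$ — and therefore $|x|$ — stays bounded on $[-T,0]$, so by [D$_1$] no solution escapes to infinity in finite time and the only obstruction to global existence on $[-T,0]$ is collision. Assumption [D$_2$] is what makes the regularized vector field regular (of class $C^1$) across the collision set, so that uniqueness of solutions and continuous dependence on the data persist for orbits reaching or grazing the origin.

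\emph{Part (a).} Observe first that every arc joining two points of the same ray is automatically rectilinear: the angular momentum $c$ keeps a constant sign, so $\theta$ is monotone, and $\theta(0)=\theta(-T)$ forces $\theta$ to be constant. Writing $x=r\,e$ with $e$ the common unit vector, \eqref{eu1} reduces to the scalar equation $\ddot r+\widetilde D(r)\dot r=-r^{-2}$, $\widetilde D(r):=D(re)$; the substitution $r=\rho^2$ together with the Sundman time change $dt=\rho^2\,d\tau$ turns it into the regular planar system
\[
\rho''=\tfrac12 E\rho-\widetilde D(\rho^2)\rho^2\rho',\qquad E'=-4\,\widetilde D(\rho^2)(\rho')^2,\qquad E:=\tfrac12\dot r^2-r^{-1},
\]
whose coefficient $\widetilde D(\rho^2)$ is $C^1$ in $\rho$ precisely because of [D$_2$]. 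Up to the symmetry $\rho\mapsto-\rho$, a rectilinear arc from $A$ to $B$ corresponds to a solution staying in $\rho>0$ that joins the levels $|A|^{1/2}$ and $|B|^{1/2}$ with $\int\rho^2\,d\tau=T$. A helpful structural fact is that every turning point of $r$ is a strict local maximum (at $\dot r=0$ one has $\ddot r=-r^{-2}<0$), so the arc is either monotone or unimodal. Existence then follows by shooting from $A$: as the initial radial velocity runs from the collision threshold to $+\infty$, the value $r(0)$ runs continuously over $(0,\infty)$, so it hits $|B|$. Uniqueness reduces to strict monotonicity of this shooting function, which is a comparison/disconjugacy statement for the variational equation along the arc, and is the delicate point of this part.

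\emph{Part (b).} It suffices, by the symmetry between the two rotation senses, to produce the counterclockwise arc. Levi--Civita variables $x=z^2$ ($z\in\mathbb C$) together with a Sundman time change again regularize the collision, and the transformed second order system in $(z,z',E)$ is $C^1$ thanks to [D$_2$], its coefficients involving $D(z^2)$ whose $z$-gradient has size comparable to $\sqrt{|x|}\,\nabla D(x)\to0$. In these variables a counterclockwise arc from $A$ to $B$ of flight time $T$ and less than one tour is a $z$-trajectory issuing from a fixed square root of $A$ that first reaches the appropriate square root of $B$ after turning through an angle in $(0,\pi)$, with $|z|^2=|B|$ and elapsed $t$-time equal to $T$ there. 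I would study the two-parameter shooting map $\Phi$ sending the admissible initial velocities $v=\dot x(-T)$ (those with $\det(A,v)>0$) to the pair $(|x|,\text{elapsed time})$ at that first correct crossing: it is defined and continuous on an open region $\mathcal U$ of the $v$-plane, it is proper as a map $\mathcal U\to(0,\infty)^2$ (as $v\to\partial\mathcal U$ the trajectory either collides before the crossing is completed, or escapes, or the elapsed time degenerates), and the sought arc exists once $(|B|,T)$ lies in its image. That this holds would be deduced from a nonzero degree, computed by deforming $D$ to $0$ along $D_\lambda=\lambda D$, $\lambda\in[0,1]$, which connects the problem to the frictionless Lambert problem, whose solvability within this class — and hence the nonvanishing of the relevant degree — is classical.

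\emph{Main obstacle.} The principal difficulty, common to both parts, is to reconcile the collision singularity with the shooting and degree arguments. One must check that the regularized flow is genuinely complete on $[-T,0]$ for the data at hand; that passing to the regularized picture introduces no spurious solutions — in particular trajectories that ``bounce'' at the origin, which are not classical arcs and must be excluded; and, for part (b), that the behaviour of $\Phi$ near $\partial\mathcal U$ (escape to infinity, approach to collision, approach to the rectilinear regime) is controlled uniformly along the homotopy in $\lambda$, so that the degree can indeed be evaluated. It is exactly here that the boundedness in [D$_1$] and the sharp decay rate in [D$_2$] are used in an essential way.
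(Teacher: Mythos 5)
Your overall architecture (reduce (a) to a scalar shooting problem, regularize collisions à la Levi--Civita using {\bf [D$_2$]}, and settle (b) by a degree/homotopy argument anchored at a problem whose degree is computable) matches the paper's, but there are three concrete gaps. First, in part (a) you explicitly defer the uniqueness step (``strict monotonicity of the shooting function \ldots is the delicate point''), which is precisely the content that must be supplied; the paper does it with an elementary comparison argument (Lemma \ref{lem400}: two solutions of the scalar equation \eqref{1d} intersect at most once, proved by integrating $\ddot r_i+\delta(r_i)\dot r_i=-r_i^{-2}$ between two consecutive intersections and comparing the resulting boundary slopes), which then yields both monotonicity of the shooting map and, via Lemma \ref{lem41}{\em (ii)} and Lemma \ref{lem43}, the nondegeneracy needed later. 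Note also that the paper deliberately avoids Levi--Civita in part (a): {\bf [D$_2$]} is not needed there (Corollary \ref{cor42}), so invoking the regularized system for (a) is an unnecessary detour.

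Second, in part (b) you shoot \emph{forward} from $A$ over initial velocities $v=\dot x(-T)$, and properness of your map $\Phi$ requires an a priori bound on $|v|$ over the solution set. The paper only establishes such a bound for the \emph{final} speed $|\dot x(0)|$ (Proposition \ref{prop2}) and explicitly states that the analogous bound for the initial speed is not known; this is exactly why it works with time-backwards Poincar\'e maps. As written, the properness of your forward shooting map is an unsubstantiated claim on which the whole degree argument rests. Third, your treatment of the boundary behaviour (``collides before the crossing is completed, or escapes, or the elapsed time degenerates'') does not rule out that the solution set of the homotopy accumulates on the collision boundary of $\mathcal U$; the paper excludes this by proving that arcs whose data approach $\partial\Omega$ sweep an angle of at least $2\pi$ in the limit (Corollary \ref{lem32}, resting on the continuous extension of the regularized flow in Proposition \ref{prop12}), so that arcs of less than one tour joining non-collinear endpoints stay uniformly away from the boundary. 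This quantitative ``bouncing'' statement is the technical heart of part (b) and is absent from your sketch. Finally, your homotopy $D_\lambda=\lambda D$ to the frictionless Lambert problem is a genuinely different (and workable-looking) route, but it obliges you to prove the a priori bounds uniformly in $\lambda$ and to compute the frictionless degree via Sim\'o's uniqueness plus a nondegeneracy check; the paper instead keeps $D$ fixed and rotates the endpoint $A_\lambda$ from the collinear configuration to $A$, so that the base degree is read off directly from Corollary \ref{cor42} and Lemma \ref{lem43}.
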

Some remarks are in order:
\begin{enumerate}
\item[(i)] It seems reasonable to ask whether assumptions {\bf [D$_{1-2}$]} are actually necessary. While we cannot entirely answer to this question, it will be clear from our discussion (see Corollary \ref{cor42}) that {\bf [D$_{2}$]} may indeed be fully dropped in the case of the rectilinear statement {\em (a)}. Assumption {\bf [D$_{2}$]} will be required only in Sections \ref{sec6}-\ref{sec7} to complete the proof of {\em (b)}, and we do not know whether one could construct another proof without this hypothesis. It implies that the function $z\in\mathbb C\backslash\{0\}\mapsto D(z^2)$ can be extended to a continuously differentiable function of two real variables on $\mathbb C\equiv \mathbb R^2$,  and in particular, $D$ has a limit at $x=0$.
\item[(ii)] Concerning the nonrectilinear situation {\em (b)}, the two arcs rotating in opposite directions are well-known to be unique  in the frictionless case $D\equiv 0$ (see \cite{Sim}). We do not know whether uniqueness still holds under the presence of friction. 
\item[(iii)] Throughout this paper we focus our attention on motions  making less than one full tour on the given time interval, which we call {\em arcs} (they are sometimes called {\em simple arcs} in the literature).  It would be interesting to study the existence and multiplicity of solutions turning more than one tour; see, e.g. the recent paper \cite{AlbUre2} on the frictionless situation.

\item[(iv)]In the rectilinear case {\em (a)}, the unique solution is actually nondegenerate (see Lemma \ref{lem43}). Thus, when the two endpoints are slightly perturbated so as not lie on the same line, our solution can be continued in such a way that it sweeps a small angle (the so-called direct arc). These problems admit also solutions rotating in the opposite direction and therefore sweeping an angle close to $2\pi$ (indirec arcs); in the limit such solutions converge to a {\em generalized solution} which bounces at the origin. Bouncing solutions will not appear explicitly in this paper, but they will be somehow behind the arguments of Sections \ref{sec8}-\ref{sec7}.
\end{enumerate}	

After this introduction the paper is organized as follows. We begin with Sections \ref{sec31} and \ref{sec44}, where we discuss some general properties of the damped Kepler equation {\em (K)}. In Section \ref{sec22} we state, without proof, three important results labelled as Propositions \ref{prop11}, \ref{prop2} and \ref{prop12}, which will promptly lead to the proof of Theorem \ref{th1} in Section \ref{sec555}. The second part of the paper is devoted to prove the three propositions advanced in Section \ref{sec22}; more precisely Proposition  \ref{prop11} is proved in Section \ref{sec5}, Proposition \ref{prop2} is established in Section \ref{sec3}, and the remaining Sections \ref{sec8}-\ref{sec7} are devoted to validate Proposition \ref{prop12}. 

 \section{Catastrophes are due to collisions}\label{sec31}
 Equation \eqref{eu1} admits several alternative rewritings which can be used to reveal a number of its features. To start with, let the nonvanishing function $x=x(t)$, $t\in]\alpha,0]$, be continuously differentiable (our solutions will always be defined on time intervals ending at $t=0$ unless explicitly stated otherwise). We set
\begin{equation}\label{pp}
p(t):=\exp\left(-\int_{t}^0 D(x(s))ds\right).
\end{equation}
Then \eqref{eu1} becomes
$$\frac{d}{dt}\big(p(t)\dot x\big)=-\frac{p(t)}{|x|^3}x,\qquad x\not=0,$$
as one can readily check. While this equality is reminiscent of the usual presentation of Sturm-Liouville systems, the function $p$ appearing here depends on $x$ in a nonlinear, nonlocal fashion. Notice that
\begin{equation}\label{sti}
e^{D_*t}\leq p(t)\leq 1\,,\qquad t\in]\alpha,0]\,,
\end{equation}
where $D_*\geq 0$ stands for some upper bound of $D$ on $\mathbb R^2\backslash\{0\}$. If $x=x(t)$ is a solution of \eqref{eu1}, then it will satisfy
\begin{equation}\label{eu25}
\big|p(t)\dot x(t)-\dot x(0)\big|=\left|\int_{0}^{t}\frac{p(s)}{|x(s)|^3}x(s)ds\right|\leq\int_{t}^{0} \frac{1}{|x(s)|^2}ds\leq\frac{|t|}{\min_{t\leq s\leq 0}|x(s)|^2}\,,\qquad t\in]\alpha,0].
\end{equation}

We shall often work with solutions $x:]\alpha,0]\to\mathbb R^2\backslash\{0\}$ of \eqref{eu1} which are {\em maximal in the past}. It means that if the extended solution $\hat x:]\hat\alpha,0]\to\mathbb R^2\backslash\{0\}$ satisfies that $\hat\alpha\leq\alpha$ and $\hat x(t)=x(t)$ for every $t\in]\alpha,0]$, then $\hat\alpha=\alpha$. In this situation, if $\alpha>-\infty$ we may say that $x$ is not globally defined in the past, and solutions of this kind are the target of the following 

\begin{lemma}\label{lem31}{Let $x:]\alpha,0]\to\mathbb R^2\backslash\{0\}$ be a solution of \eqref{eu1}, maximal in the past. If 	$\alpha>-\infty$ then $\liminf_{t\downarrow\alpha}|x(t)|=0$.}
	\begin{proof}
The combination of \eqref{sti}-\eqref{eu25} gives
	$$|\dot x(t)|\leq e^{D_*|t|}|\dot x(0)|+ \frac{|t|\,e^{D_*|t|}}{\min_{t\leq s\leq 0}|x(s)|^2}\,,\qquad \alpha<t<0.$$ 	
		
		If we assume, using a contradiction argument, that $\liminf_{t\downarrow\alpha}|x(t)|>0$, then this inequality implies that $\limsup_{t\downarrow\alpha}|\dot x(t)|<+\infty$, and \eqref{eu1} gives $\limsup_{t\downarrow\alpha}|\ddot x(t)|<+\infty$. Therefore both $x(t)$ and $\dot x(t)$ have limits when $t\downarrow\alpha$, the limit of $x$ being nonzero. The standard continuation theory for solutions of ordinary differential equations states that our solution can be extended to some time interval containing $\alpha$. This is a contradiction and concludes the proof.  	\end{proof}
\end{lemma}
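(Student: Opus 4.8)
The plan is to argue by contradiction: assume $\liminf_{t\downarrow\alpha}|x(t)|>0$ and show that this forces $x$ to extend to a solution on a neighbourhood of $\alpha$, contradicting maximality in the past. The proof will be a short chain of a priori estimates culminating in an appeal to the standard ODE continuation theorem.

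The first step is to promote the one-sided hypothesis $\liminf_{t\downarrow\alpha}|x(t)|>0$ into a uniform positive lower bound for $|x|$ on the whole interval $]\alpha,0]$. Indeed, the liminf being positive yields some $t_0\in]\alpha,0]$ with $|x|$ bounded below by a positive constant on $]\alpha,t_0]$; on the compact interval $[t_0,0]$ the continuous, nonvanishing function $|x|$ is again bounded below by a positive constant; hence $m:=\inf_{]\alpha,0]}|x|>0$. Now I would feed $m$ into the estimates already derived in this section: combining \eqref{sti} and \eqref{eu25} gives
$$|\dot x(t)|\le e^{D_*|t|}\,|\dot x(0)|+\frac{|t|\,e^{D_*|t|}}{m^2}\,,\qquad t\in\,]\alpha,0]\,,$$
and since $\alpha>-\infty$ the right-hand side is bounded on $]\alpha,0]$. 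Thus $\dot x$ is bounded, so $x$ is Lipschitz on $]\alpha,0]$ and therefore extends continuously to $t=\alpha$ with $|x(\alpha)|\ge m>0$. Plugging this back into \eqref{eu1}, the right-hand side $-D(x)\dot x-x/|x|^3$ stays bounded as $t\downarrow\alpha$ (using the boundedness of $D$), so $\ddot x$ is bounded, $\dot x$ is Lipschitz, and $\dot x(t)$ also has a finite limit as $t\downarrow\alpha$.

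For the final step I would rewrite \eqref{eu1} as the first-order system $\dot y=G(y)$ with $y=(x,\dot x)$ and $G(x,v)=\big(v,\,-D(x)v-x|x|^{-3}\big)$, which is of class $C^1$ on the open set $(\mathbb R^2\backslash\{0\})\times\mathbb R^2$ because $D\in C^1$ and $x\mapsto x|x|^{-3}$ is smooth away from the origin. By the previous paragraph the curve $t\mapsto(x(t),\dot x(t))$ extends continuously to $t=\alpha$ with limit lying in this open set; the standard existence-and-uniqueness theorem then provides a local solution of the Cauchy problem with that datum at $t=\alpha$, which by uniqueness agrees with $(x,\dot x)$ on the overlap and so yields a genuine extension of $x$ to an interval $]\alpha-\eta,0]$. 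This contradicts the assumption that $x$ is maximal in the past. The only point requiring a moment's care is the first reduction — converting the $\liminf$ condition near $\alpha$ into a uniform bound — but this is settled immediately by compactness of $[t_0,0]$; the rest is routine.
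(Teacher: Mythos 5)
Your proposal is correct and follows essentially the same route as the paper: the same a priori bound on $\dot x$ from the combination of \eqref{sti} and \eqref{eu25}, then boundedness of $\ddot x$ via \eqref{eu1}, existence of nonzero limits for $(x,\dot x)$ at $t=\alpha$, and the standard continuation theorem to contradict maximality. You merely spell out a few steps the paper leaves implicit (the compactness reduction from $\liminf$ to a uniform lower bound, and the Lipschitz argument for the limits), which is fine.
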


Assume now that $T>0$ is fixed and $x_n:[-T,0]\to\mathbb R^2\backslash\{0\}$ is a sequence of solutions of \eqref{eu1} satisfying 
$$x_n(0)\to x_0\not=0,\qquad \dot x_n(0)\to\dot x_0.$$

\begin{lemma}\label{lem322}
	{If the solution $x=x(t)$ of {(K)} with $x(0)=x_0$ and $\dot x(0)=\dot x_0$ cannot be extended to $[-T,0]$, then $\min_{[-T,0]}|x_n|\to 0$ as $n\to+\infty$.}
	
	\begin{proof}We argue by contradiction and assume that, after possibly passing to a subsequence, $\{\min_{[-T,0]}|x_n|\}_n$ is bounded from below by a positive constant. Combining \eqref{sti}-\eqref{eu25} with the fact that $\{\dot x_n(0)\}$ is bounded we conclude that $\{|\dot x_n|\}_n$ is uniformly bounded on $[-T,0]$. In addition $\{|x_n(0)|\}$ is bounded, and it follows that also $\{|x_n(t)|\}_n$ is uniformly bounded on $[-T,0]$. Moreover, both sequences $\{x_n(t)\}_n$, $\{\dot x_n(t)\}_n$ are equicontinuous (as a consequence of equation \eqref{eu1} in the latter case), and the Ascoli-Arzela lemma states that they are both uniformly convergent, at least along some subsequence. Then, $x(t):=\lim_{n\to+\infty}x_n(t)$ must be a solution of \eqref{eu1} satisfying $x(0)=x_0$, $\dot x(0)=\dot x_0$ and defined on $[-T,0]$. It contradicts our assumptions and concludes the proof.		
\end{proof}
\end{lemma}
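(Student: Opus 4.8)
The plan is to argue by contradiction, reusing the machinery of Lemma \ref{lem31}. Suppose the conclusion fails; then, after passing to a subsequence (not relabelled), there is a constant $\delta>0$ with $\min_{[-T,0]}|x_n|\geq\delta$ for every $n$. The first step is to promote this lower bound into genuine compactness of the sequence. Applying to each $x_n$ on $[-T,0]$ the same computation that produced the estimate displayed in the proof of Lemma \ref{lem31} (it follows from \eqref{sti}--\eqref{eu25}) and using that $\{\dot x_n(0)\}$ is bounded, since it converges, one obtains a bound $|\dot x_n(t)|\leq C$ on $[-T,0]$ with $C$ independent of $n$. Integrating from $0$ and using the boundedness of $\{x_n(0)\}$, the sequence $\{x_n\}$ is uniformly bounded above as well, say $|x_n|\leq R$; combined with the lower bound this confines every curve $x_n$ to a fixed compact annulus $\Omega=\{\delta\leq|x|\leq R\}$, on which $D$ and the Newtonian field $-x/|x|^3$ are bounded.

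The second step is a standard Ascoli--Arzel\`a compactness argument. On $\Omega$ the right-hand side of \eqref{eu1} is bounded, so $\{\ddot x_n\}$ is uniformly bounded on $[-T,0]$; hence $\{x_n\}$ and $\{\dot x_n\}$ are uniformly bounded and equicontinuous, and a further subsequence converges uniformly on $[-T,0]$ to a pair $(y,\dot y)$. Passing to the limit in the integral form of \eqref{eu1}, the curve $y:[-T,0]\to\mathbb R^2\setminus\{0\}$ is a classical solution of \eqref{eu1} --- it never meets the origin because it stays in $\Omega$ --- and by construction $y(0)=x_0$, $\dot y(0)=\dot x_0$.

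The final step is to invoke uniqueness for the initial value problem associated with \eqref{eu1}, which holds because the vector field is $C^1$ away from the collision set. Then $y$ coincides with $x$ on the common part of their domains, and since $y$ is defined on all of $[-T,0]$ it extends $x$ to $[-T,0]$, contradicting the hypothesis. I expect the only delicate point to be bookkeeping: keeping track of the nested subsequences and verifying that the limit curve inherits precisely the initial data $(x_0,\dot x_0)$. The two analytic inputs --- compactness and uniqueness --- are routine and run parallel to those already used in Lemma \ref{lem31}.
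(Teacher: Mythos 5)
Your argument is correct and follows essentially the same route as the paper's proof: contradiction via a uniform lower bound on $|x_n|$, the a priori velocity bound from \eqref{sti}--\eqref{eu25}, Ascoli--Arzel\`a, and passage to the limit to produce a solution on $[-T,0]$ with the prescribed final data. The only difference is that you make explicit the final appeal to uniqueness of the initial value problem, which the paper leaves implicit; this is a harmless (and arguably welcome) clarification.
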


\section{Rectilinear solutions of the Kepler equation}\label{sec44}
We devote this section to explore several features of the collinear motions of the Kepler problem. More precisely, let $w_0\in\mathbb R^2$ with $|w_0|=1$ be fixed and consider motions $x:]\alpha,0]\to\mathbb R^2\backslash\{0\}$ of the form $x(t):=r(t)w_0$ with $r=r(t)>0$. Setting $\delta(r):=D(r w_0)$, equation \eqref{eu1} becomes
\begin{equation}\label{1d}
\ddot r+\delta(r)\dot r=-\frac{1}{r^2},\qquad r>0.
\end{equation} 

Throughout this section we shall study this scalar equation under the assumption (modelled on {\bf [D$_{1}$]}) that $\delta:]0,+\infty[\to\mathbb R$ is {\em nonnegative, bounded and continuously differentiable} (growth assumptions on $\delta'$ near the origin are not required at this stage). We shall start with the following observation:
 \begin{lemma}\label{lem400}
 {Two solutions $r_1\not\equiv r_2$ of \eqref{1d} intersect at most once. With other words, if $r_1(t_*)=r_2(t_*)$ for some $t_*$ then $r_1(t)\not=r_2(t)$ for any $t\not=t_*$ in the common definition interval of $r_1$ and $r_2$.}	
\begin{proof}We use a contradiction argument and assume instead that there are solutions $r_1\not\equiv r_2$ of \eqref{1d} and times  $t_A<t_B$ in the common definition interval of $r_1$ and $r_2$ such that
	\begin{equation}\label{eqq25}
		r_1(t_A)=r_2(t_A)=:r_A,\qquad r_1(t_B)=r_2(t_B)=:r_B,\qquad   r_1(t)<r_2(t)\ \text{for every }t\in]t_A,t_B[.
	\end{equation}
	Let $\Delta:]0,+\infty[\to\mathbb R$ denote a primitive of $\delta$. Integration in both sides of \eqref{1d} leads to the equalities
	$$\dot r_i(t_B)-\dot r_i(t_A)=-\Delta(r_B)+\Delta(r_A)-\int_{t_A}^{t_B}\frac{1}{r_i(t)^2}dt,\qquad i=1,2,$$
	implying that
	$$\dot r_2(t_B)-\dot r_2(t_A)>\dot r_1(t_B)-\dot r_1(t_A),$$
	which is not possible since $\dot r_1(t_A)\leq\dot r_2(t_A)$ and  $\dot r_1(t_B)\geq\dot r_2(t_B)$, by \eqref{eqq25}. It concludes the proof.
	\end{proof}
 \end{lemma}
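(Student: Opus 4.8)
The plan is to argue by contradiction, so suppose $r_1\not\equiv r_2$ are solutions of \eqref{1d} that agree at two distinct times in their common interval of definition. First I would clean up the configuration. Since $\delta$ is $C^1$ and the right-hand side $-1/r^2$ is smooth on the region $r>0$, the Cauchy problem for \eqref{1d} is uniquely solvable; hence $r_1$ and $r_2$ cannot coincide on any nondegenerate subinterval (otherwise they would coincide identically). Consequently, between the two agreement times one can find $t_A<t_B$ that are \emph{consecutive} zeros of $r_1-r_2$, so that $r_1-r_2$ keeps a strict constant sign on $]t_A,t_B[$; relabelling $r_1,r_2$ if necessary, I may assume $r_1(t_A)=r_2(t_A)=:r_A$, $r_1(t_B)=r_2(t_B)=:r_B$, and $r_1(t)<r_2(t)$ for all $t\in\,]t_A,t_B[$. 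This is exactly the situation \eqref{eqq25}.

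The key algebraic point is that \eqref{1d} can be put in divergence form: if $\Delta$ denotes any primitive of $\delta$, then $\delta(r)\dot r=\frac{d}{dt}\Delta(r(t))$, so \eqref{1d} reads $\frac{d}{dt}\bigl(\dot r+\Delta(r)\bigr)=-1/r^2$. Integrating this identity over $[t_A,t_B]$ for $i=1,2$, and using that both solutions take the value $r_A$ at $t_A$ and $r_B$ at $t_B$ (so that the $\Delta$-contributions at the endpoints are the same for $i=1$ and $i=2$), gives
\[
\dot r_i(t_B)-\dot r_i(t_A)=\Delta(r_A)-\Delta(r_B)-\int_{t_A}^{t_B}\frac{dt}{r_i(t)^2},\qquad i=1,2.
\]
Since $0<r_1(t)<r_2(t)$ on $]t_A,t_B[$ we have $1/r_1(t)^2>1/r_2(t)^2$ there, so subtracting the identity for $i=1$ from that for $i=2$ yields the strict inequality $\dot r_2(t_B)-\dot r_2(t_A)>\dot r_1(t_B)-\dot r_1(t_A)$.

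To finish I would read off a contradiction from the geometry of the crossing. The function $g:=r_1-r_2$ vanishes at $t_A$ and is negative just to the right of $t_A$, forcing $\dot g(t_A)\le 0$, i.e. $\dot r_1(t_A)\le\dot r_2(t_A)$; and $g$ vanishes at $t_B$ and is negative just to the left of $t_B$, forcing $\dot g(t_B)\ge 0$, i.e. $\dot r_1(t_B)\ge\dot r_2(t_B)$. Adding these two inequalities gives $\dot r_1(t_B)-\dot r_1(t_A)\ge\dot r_2(t_B)-\dot r_2(t_A)$, contradicting the strict inequality obtained above. I do not anticipate a genuine obstacle: the step most in need of care is the opening reduction — invoking uniqueness for \eqref{1d} to pass to two consecutive crossings with a definite sign of $r_1-r_2$ between them — together with the bookkeeping that makes the $\Delta$-boundary terms cancel; the rest is an elementary comparison. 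It is worth noting that neither boundedness nor nonnegativity of $\delta$ is actually used in this argument, only that $\delta$ is $C^1$, which guarantees both unique solvability of \eqref{1d} and the existence of a primitive $\Delta$.
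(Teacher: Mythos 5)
Your proposal is correct and follows essentially the same route as the paper: reduce to consecutive crossings with a fixed sign of $r_1-r_2$, integrate the equation in the divergence form $\frac{d}{dt}\bigl(\dot r+\Delta(r)\bigr)=-1/r^2$ so the $\Delta$-boundary terms cancel, compare the integrals of $1/r_i^2$, and contradict the sign of $\dot r_1-\dot r_2$ at the endpoints. The only difference is that you spell out the uniqueness argument justifying the reduction to consecutive zeros, which the paper leaves implicit.
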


Let $r=r(t)$, $t\in]\alpha,0]$, be a solution of \eqref{1d}. Recalling the arguments at the beginning of Section \ref{sec31}, and setting 
\begin{equation}\label{pn31}
p(t):=\exp\left(-\int_t^0\delta(r(s))ds\right),
\end{equation}
we see that 
\begin{equation}\label{1d+}
\frac{d}{dt}(p(t)\dot r)=-\frac{p(t)}{r^2},\qquad r>0.
\end{equation}
On the other hand, denoting by $D_*\geq 0$ an upper bound of $\delta$ on $]0,+\infty[$ one checks that the inequalities \eqref{sti} still hold in this situation.

\medbreak

 Fix numbers $T,r_B>0$ and consider the set $\mathcal I$ of final speeds $v\in\mathbb R$ such that the solution $r=r(t)$ of \eqref{1d} with $r(0)=r_B$ and $\dot r(0)=v$ is defined in the past up to time $t=-T$. The usual smooth dependence theorems state that $\mathcal I$ is open and the function $$\mathfrak R:\mathcal I\to\mathbb R,\qquad v\mapsto r(-T),$$ 
is continuously differentiable on $\mathcal I$. The main result of this section collects some basic properties of $\mathcal I$ and $\mathfrak R$:
\begin{lemma}\label{lem41}
	{The following hold:
		\begin{enumerate}
		\item[(i)] There exists some $\beta\in\mathbb R$ such that $\mathcal I=]-\infty,\beta[$.
			\item[(ii)] $\mathfrak R$ establishes a decreasing diffeomorphism from $]-\infty,\beta[$ into $]0,+\infty[$. With formulas,
		\begin{equation*}
			\mathfrak R'(v)<0\text{ for every } v<\beta\,,\qquad\lim_{v\to-\infty}\mathfrak R(v)=+\infty,\qquad \lim_{v\to\beta_-}\mathfrak R(v)=0.
		\end{equation*}
		
	\end{enumerate}}
\begin{proof}
The fact that $\mathcal I$ is open and $\mathfrak R:\mathcal I\to\mathbb R$ is continuously differentiable is a direct consequence of the usual theorems of smooth dependence on initial conditions. Moreover, it follows from Lemmas \ref{lem31} and \ref{lem400} that the set $\mathcal I$ is an interval and $\mathfrak R:\mathcal I\to\mathbb R$ is strictly decreasing.

\medbreak

We claim first that $\mathcal I$ is unbounded from below and $\lim_{v\to-\infty}\mathfrak R(v)=+\infty$. It can be done by picking some sequence $\{r_n\}_n$ of solutions of \eqref{1d} with 
$$r_n(0)=r_B\text{ for every }n,\qquad 0>v_n:=\dot r_n(0)\to-\infty.$$ Each function $r_n$ is defined on some interval $]\alpha_n,0]$, maximal to the left. If $\dot r_n(t)<0\ \forall t\in]\max(\alpha_n,-T),0[$ then Lemma \ref{lem31} implies that $\alpha_n<-T$ and we set $a_n:=-T$. Otherwise, $\dot r_n(t)=0$ for some $t\in]\max(\alpha_n,-T),0[$, and we denote by $a_n$ the maximum of such numbers $t$. In any case, $r_n(t)\geq r_B$ for all $t\in[a_n,0]$, and defining $p_n:[a_n,0]\to\mathbb R$ as in \eqref{pn31} for $r=r_n$, equality \eqref{1d+} gives
\begin{equation*}
\left|\frac{d}{dt}\big(p_n(t)\dot r_n\big)\right|=\frac{p_n(t)}{r_n(t)^2}\leq\frac{e^{D_*T}}{r_B^2} ,\qquad a_n\leq t\leq 0,\qquad n\in\mathbb N,
\end{equation*}
from where it follows that $\max_{[a_n,0]}\dot r_n\to-\infty$ as $n\to+\infty$. In combination with Lemma \ref{lem31} it implies the claim. 

\medbreak

We observe next that $\mathcal I$ is bounded from above. Arguing by contradiction, we assume the existence of a second sequence $\{r_n\}_n$ of solutions of \eqref{1d} with $v_n:=\dot r_n(0)\to+\infty$, all of them defined on $[-T,0]$ and satisfying $r_n(0)=r_B$. It follows from \eqref{1d+} that 
$$\frac{d}{dt}(p_n(t)\dot r_n(t))< 0,\qquad t\in[-T,0],$$
where each $p_n:[-T,0]\to\mathbb R$ is defined as in \eqref{pn31} for $r=r_n$. It implies that $\dot r_n(t)\to+\infty$ uniformly with respect to $t\in[-T,0]$, which is not possible since all $r_n$ are positive.

\medbreak

We also need to check that $\lim_{v\to\beta_-}\mathfrak R(v)=0$. This statement follows from the combination of Lemma \ref{lem322} with the observation that solutions $r=r(t)$ of \eqref{1d} do not have local minima in open time intervals.

\medbreak

It remains to show that $\mathfrak R'(v)\not=0$ for every $v<\beta$. We use a contradiction argument and assume instead that $\mathfrak R'(v_*)=0$ for some $v_*<\beta$. It implies the existence of some solution $r_*=r_*(t)$ of \eqref{1d} such that the linear Dirichlet problem 
\begin{equation}\label{lin}
\ddot u+\delta'(r_*(t))\dot r_*(t)u+\delta(r(t))\dot u=\frac{2u}{r_*(t)^3},\qquad u(-T)=u(0)=0,
\end{equation}
has a nonzero solution $u:[-T,0]\to\mathbb R$. After possibly replacing $u$ by $-u$ and $T$ by some smaller time there is no loss of generality in further assuming that
$u(t)>0$ for every $t\in]-T,0[$. Noting that $\delta'(r_*)\dot r_*u+\delta(r_*)\dot u=(d/dt)(\delta(r_*)u)$, integration in \eqref{lin} gives
$$\dot u(0)-\dot u(-T)=2\int_{-T}^0\frac{u(t)}{r_*(t)^3}\,ds>0,$$
which is not possible since $\dot u(-T)<0<\dot u(0)$. This contradiction concludes the proof.
\end{proof}
\end{lemma}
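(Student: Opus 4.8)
The engine for the whole lemma is the reformulation \eqref{1d+}: along any solution of \eqref{1d} the quantity $p(t)\dot r(t)$ satisfies $(p\dot r)'=-p/r^2<0$, so it is strictly decreasing; since $p>0$ this forces $\dot r$ to change sign at most once (from $+$ to $-$ as $t$ increases), i.e.\ every solution of \eqref{1d} has at most one critical point, a strict global maximum. I would quote the standard smooth-dependence theorems for the openness of $\mathcal I$ and the $C^1$ regularity of $\mathfrak R$, and then use Lemma \ref{lem400} for the ``interval plus decreasing'' content of (i): if $v_1<v_2$ lie in $\mathcal I$, the solutions $r_{v_1},r_{v_2}$ coincide at $t=0$ and hence, by Lemma \ref{lem400}, never meet again, so $r_{v_2}(t)<r_{v_1}(t)$ for $t<0$; a solution with speed $v\in\,]v_1,v_2[$ is likewise trapped between them, stays in a compact subset of $]0,+\infty[$ on its interval of definition, and therefore extends to $[-T,0]$ by Lemma \ref{lem31}. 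This gives $v\in\mathcal I$ together with $\mathfrak R(v_2)<\mathfrak R(v)<\mathfrak R(v_1)$.

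Next I would locate the endpoints of $\mathcal I$. As $v\to-\infty$ the solution is backward-increasing near $t=0$; cutting the time interval at the last zero of $\dot r$ (or at $-T$ if there is none) one has $r\ge r_B$ on what remains, hence $|(p\dot r)'|=p/r^2\le r_B^{-2}$ there, and integrating — using also the two-sided bound \eqref{sti} on $p$ — shows $\dot r$ stays very negative, so for $|v|$ large $\dot r$ never vanishes, the solution survives to $-T$ by Lemma \ref{lem31}, and $\mathfrak R(v)=r_B-\int_{-T}^0\dot r\to+\infty$. For the upper bound, if $v>0$ and the solution reached $-T$ then $(p\dot r)'<0$ with $p\le1$ would force $\dot r\ge v$ throughout $[-T,0]$, hence $\mathfrak R(v)\le r_B-vT$, which is negative once $v$ is large — impossible. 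Thus $\mathcal I$ is an open interval, unbounded below and bounded above, so $\mathcal I=\,]-\infty,\beta[$ for some $\beta\in\mathbb R$, and $\lim_{v\to-\infty}\mathfrak R(v)=+\infty$.

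For $\lim_{v\to\beta_-}\mathfrak R(v)=0$ I would argue by contradiction: $\mathfrak R$ being decreasing, the limit is some $L\ge0$, and if $L>0$ I pick $v_n\uparrow\beta$ and use that solutions of \eqref{1d} have no interior minimum, so $\min_{[-T,0]}r_n=\min(r_B,\mathfrak R(v_n))\to\min(r_B,L)>0$; by the contrapositive of Lemma \ref{lem322} the solution with speed $\beta$ would then extend to $[-T,0]$, forcing $\beta\in\mathcal I$ — a contradiction.

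The delicate point, which I expect to be the main obstacle, is $\mathfrak R'\ne0$; together with monotonicity this gives $\mathfrak R'<0$, and with the two limits above it yields the diffeomorphism claim of (ii) via the inverse function theorem. Suppose $\mathfrak R'(v_*)=0$, and let $u=u(t)$ be the derivative of the solution $r(t;v)$ of \eqref{1d} with respect to $v$ at $v=v_*$. Then $u$ solves the linearized equation in \eqref{lin} with $u(0)=0$, $\dot u(0)=1$ and $u(-T)=\mathfrak R'(v_*)=0$, so $u\not\equiv0$ solves a homogeneous linear Dirichlet problem; restricting to a subinterval on which $u$ is one-signed and replacing $u$ by $-u$ if needed, one may assume $u>0$ in the interior, whence $\dot u>0$ at the left endpoint and $\dot u<0$ at the right. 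The key should then be the identity $\delta'(r_*)\dot r_*\,u+\delta(r_*)\dot u=(d/dt)(\delta(r_*)u)$ — the same exact-derivative device underlying Lemma \ref{lem400} — so that integrating \eqref{lin} over that subinterval and using $u=0$ at both ends leaves $\dot u_{\mathrm{right}}-\dot u_{\mathrm{left}}=2\int u/r_*^3>0$, contradicting the signs just found. Everything else in the argument is monotonicity plus the compactness already packaged in Lemmas \ref{lem31}--\ref{lem322}.
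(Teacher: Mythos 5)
Your proposal is correct and follows essentially the same route as the paper: smooth dependence for openness and regularity, Lemmas \ref{lem31} and \ref{lem400} for the interval and monotonicity structure, the monotone quantity $p\dot r$ from \eqref{1d+} for both endpoint behaviours, Lemma \ref{lem322} plus the absence of interior minima for $\lim_{v\to\beta_-}\mathfrak R(v)=0$, and the exact-derivative identity $\delta'(r_*)\dot r_*u+\delta(r_*)\dot u=(d/dt)(\delta(r_*)u)$ for the nondegeneracy step. Indeed, in that last step your endpoint signs $\dot u(-T)>0>\dot u(0)$ are the correct ones for a solution positive on the interior (the paper's text writes them the other way round), and the extra details you supply (the trapping argument between $r_{v_1}$ and $r_{v_2}$, the bound $\mathfrak R(v)\leq r_B-vT$) are just explicit versions of what the paper leaves implicit.
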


One immediately arrives to the following reformulation of Theorem \ref{th1}{\em (a)}, where no traces of assumption {\bf [D$_2$]} are present:
\begin{corollary}\label{cor42}{Let $\delta:]0,+\infty[\to\mathbb R$ be continuously differentiable, nonnegative and bounded. Then for every $r_A,r_B>0$ and every $T>0$ there exists a unique solution of \eqref{1d} with $r(-T)=r_A$ and $r(0)=r_B$.}
\end{corollary}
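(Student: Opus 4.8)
The plan is to obtain the corollary as an immediate consequence of Lemma~\ref{lem41}. I would fix $r_B>0$ and reuse the notation of that lemma: let $\mathcal I=\,]-\infty,\beta[\,$ and let $\mathfrak R:\mathcal I\to\mathbb R$ be the shooting map sending $v$ to the value at $t=-T$ of the solution of \eqref{1d} with $r(0)=r_B$, $\dot r(0)=v$, defined whenever that solution extends back to time $-T$. By part~(ii) of Lemma~\ref{lem41}, $\mathfrak R$ is a strictly decreasing diffeomorphism of $]-\infty,\beta[$ onto $]0,+\infty[\,$; in particular it is a bijection of $\mathcal I$ onto $]0,+\infty[$.

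For existence, I would take the given $r_A>0$ and use surjectivity to find the (unique) $v_*\in\,]-\infty,\beta[\,$ with $\mathfrak R(v_*)=r_A$. Since $v_*\in\mathcal I$, the solution $r=r(t)$ of \eqref{1d} determined by $r(0)=r_B$ and $\dot r(0)=v_*$ is defined on the whole interval $[-T,0]$, and by definition of $\mathfrak R$ it satisfies $r(-T)=\mathfrak R(v_*)=r_A$. Hence $r$ solves the Dirichlet problem with $r(-T)=r_A$, $r(0)=r_B$.

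For uniqueness, I would start from an arbitrary solution $r=r(t)$ of \eqref{1d} defined on $[-T,0]$ with $r(-T)=r_A$ and $r(0)=r_B$. Its final velocity $v:=\dot r(0)$ lies in $\mathcal I$, precisely because the solution does reach time $-T$, and $\mathfrak R(v)=r(-T)=r_A$. Injectivity of $\mathfrak R$ forces $v=v_*$, and then the uniqueness part of the standard existence--uniqueness theorem for the second-order equation \eqref{1d} shows that $r$ coincides with the solution constructed in the previous paragraph.

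I do not anticipate any real obstacle here: all the substantive work has already been done in Lemma~\ref{lem41} (the monotonicity of the shooting map, inherited from Lemma~\ref{lem400}; the fact that $\mathcal I$ is an interval unbounded below and bounded above; the limits $\mathfrak R\to+\infty$ as $v\to-\infty$ and $\mathfrak R\to 0$ as $v\to\beta_-$, which rest on Lemmas~\ref{lem31} and~\ref{lem322}; and the nondegeneracy $\mathfrak R'\neq 0$). The only point to state carefully is that ``a solution of \eqref{1d} with $r(-T)=r_A$ and $r(0)=r_B$'' means a solution defined on the closed interval $[-T,0]$, which is exactly what places $\dot r(0)$ in $\mathcal I$ and makes the uniqueness argument go through.
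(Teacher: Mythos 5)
Your argument is correct and is exactly the route the paper intends: the paper gives no separate proof of Corollary \ref{cor42}, presenting it as an immediate consequence of Lemma \ref{lem41}{\em (ii)}, with surjectivity of $\mathfrak R$ onto $]0,+\infty[$ giving existence and strict monotonicity (injectivity) giving uniqueness. Your added remark that a boundary-value solution on $[-T,0]$ automatically has $\dot r(0)\in\mathcal I$ is the right point to make explicit.
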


We close this section by exploring the nondegeneracy of the rectilinear solutions of the Kepler equation \eqref{eu1}. Some one-dimensional nondegeneracy was already established in Lemma \ref{lem41}{\em (ii)}, but  we would like to show nondegeneracy in the context of the planar Dirichlet problem \eqref{eu1}-\eqref{bc}. More precisely, we shall adopt the following
\begin{definition*}{A solution $x_*:[-T,0]\to\mathbb R^2\backslash\{0\}$ of \eqref{eu1} will be called nondegenerate if the variational equation 
		\begin{equation}\label{linp}
			\ddot w+\langle\nabla D(x_*(t)),w\rangle\dot x_*(t)+D(x_*(t))\dot w=-\frac{1}{|x_*(t)|^3}\,w+3\frac{\langle x_*(t),w\rangle}{|x_*(t)|^5}\,x_*(t)\,,\qquad w\in\mathbb R^2\,,
		\end{equation} 
together with the homogeneous Dirichlet boundary conditions $w(-T)=0=w(0)$, admit only the trivial solution $w\equiv 0$.}
\end{definition*}
\begin{lemma}\label{lem43}{Every rectilinear solution $x_*:[-T,0]\to\mathbb R^2\backslash\{0\}$ of \eqref{eu1} is nondegenerate.}
\begin{proof}Set $x_*(t):=r_*(t)w_0$ where $w_0\in\mathbb R^2$ is unitary and $r_*(t)>0$ for every $t\in[-T,0]$. Equation \eqref{linp} becomes
\begin{equation}\label{linp4}
	\ddot w+\langle\nabla D(r_*(t)w_0),w\rangle\dot r_*(t)w_0+\delta(r_*(t))\dot w=-\frac{1}{r_*(t)^3}\,w+3\frac{\langle w_0,w\rangle}{r_*(t)^3}\,w_0\,,\qquad w\in\mathbb R^2\,,
\end{equation} 	
where $\delta(r):=D(rw_0),\ r>0$. Let $w:[-T,0]\to\mathbb R^2$ be a solution with $w(-T)=0=w(0)$; then $v(t):=\det(w(t),w_0)$ satisfies the linear second-order equation
$$\ddot v+\delta(r_*(t))\dot v+\frac{v}{r_*(t)^3}=0,\qquad t\in[-T,0],$$
which also admits the positive solution $r_*:[-T,0]\to\mathbb R$. We use the method of reduction of order and set $v=r_*(t)v_1$, to obtain
$$\ddot v_1=-\left(2\frac{\dot r_*(t)}{r_*(t)}+\delta(r_*(t))\right)\dot v_1\,,\qquad v_1(-T)=v_1(0)=0\,.$$
The boundary conditions imply the existence of some $t_0\in]-T,0[$ such that $\dot v_1(t_0)=0$, subsequently the differential equation implies that $\dot v_1\equiv0$, and again by the boundary conditions, $v_1\equiv 0$. Consequently, $\det(w(t),w_0)\equiv0$, implying the existence of some function $u:[-T,0]\to\mathbb R$ such that $w(t)=u(t)w_0$ for every $t\in[-T,0]$. Going back to \eqref{linp4} we see that $u$ must be a solution of \eqref{lin}, and we deduce that $\dot u(0)\,\mathfrak R'(r_*(0))=u(-T)=0$. But $\mathfrak R'(r_*(0))<0$, and so, $\dot u(0)=0$, so that $u\equiv0$ by uniqueness. Therefore $w\equiv 0$, thus concluding the proof.
\end{proof}
\end{lemma}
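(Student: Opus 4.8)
The plan is to prove nondegeneracy of a rectilinear solution $x_*(t)=r_*(t)w_0$ by decomposing a solution $w$ of the variational equation \eqref{linp} into its component along $w_0$ and its component orthogonal to $w_0$, and showing that each of these must vanish. Writing $w = uw_0 + v\,w_0^\perp$ (equivalently, tracking $u = \langle w, w_0\rangle$ and $v = \det(w, w_0)$), the key algebraic observation is that the right-hand side of \eqref{linp}, namely $-|x_*|^{-3}w + 3\langle x_*, w\rangle|x_*|^{-5}x_*$, simplifies drastically because $x_*$ is parallel to $w_0$: the term $3\langle x_*, w\rangle|x_*|^{-5}x_*$ only affects the $w_0$-component, so the $w_0^\perp$-component of $w$ decouples into an autonomous scalar second-order linear equation. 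This is exactly the approach suggested by the structure of the variational equation, and the bulk of the calculation is routine substitution.

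The first step is to compute the equation satisfied by $v(t) := \det(w(t), w_0)$. Applying $\det(\cdot, w_0)$ to \eqref{linp} and using that $\det(x_*(t), w_0) = 0$ and $\det(\dot x_*(t), w_0) = \dot r_*(t)\det(w_0, w_0) = 0$, the friction cross-term $\langle \nabla D(x_*(t)), w\rangle \dot x_*(t)$ drops out (it is parallel to $w_0$), and one is left with
$$\ddot v + \delta(r_*(t))\dot v + \frac{v}{r_*(t)^3} = 0, \qquad v(-T) = v(0) = 0.$$
Crucially this same scalar equation is solved by $r_*$ itself — this is just equation \eqref{1d} differentiated appropriately, or more directly one checks that $r_*$ solves the linearization of the radial flow, which is the content of the computation that $r_* $ satisfies $\ddot r_* + \delta(r_*)\dot r_* = -r_*^{-2}$ so that $\frac{d}{dt}(\cdot)$... actually one verifies directly that $\ddot r_* + \delta(r_*)\dot r_* + r_*^{-3}\cdot r_* = \ddot r_* + \delta(r_*)\dot r_* + r_*^{-2} = 0$ is false — so I should instead just observe that the operator $L v := \ddot v + \delta(r_*)\dot v + r_*^{-3}v$ annihilates $r_*$ because $r_*^{-3}\cdot r_* = r_*^{-2}$ and $\ddot r_* + \delta(r_*)\dot r_* = -r_*^{-2}$: indeed $L r_* = -r_*^{-2} + r_*^{-2} = 0$. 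Since $r_* > 0$ on all of $[-T,0]$, I use reduction of order, writing $v = r_* v_1$; the substitution kills the zeroth-order term and yields $\ddot v_1 + (2\dot r_*/r_* + \delta(r_*))\dot v_1 = 0$ with $v_1(-T) = v_1(0) = 0$. A first integral (or Rolle's theorem producing an interior zero of $\dot v_1$, followed by the uniqueness/Gronwall argument on the first-order linear ODE for $\dot v_1$) forces $\dot v_1 \equiv 0$, hence $v_1 \equiv 0$, hence $v \equiv 0$.

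Once $v \equiv 0$, the solution $w$ is everywhere parallel to $w_0$, say $w(t) = u(t)w_0$. Substituting back into \eqref{linp} and taking the $w_0$-component (the equation is now scalar), the cross term contributes $\langle \nabla D(r_*(t)w_0), w_0\rangle u\,\dot r_* = \delta'(r_*(t))\dot r_*(t)u$ along $w_0$, and the right side becomes $-r_*^{-3}u + 3r_*^{-3}u = 2r_*^{-3}u$; thus $u$ solves precisely the linear equation \eqref{lin} from the proof of Lemma~\ref{lem41}, with $u(-T) = u(0) = 0$. Now I invoke the nondegeneracy already proved there: the solution of \eqref{lin} with $u(0) = 0$, $\dot u(0) = 1$ has $u(-T) = \mathfrak R'(r_*(0)) \ne 0$ (this is Lemma~\ref{lem41}(ii), which gives $\mathfrak R'(v_*) < 0$ strictly); by linearity the general solution with $u(0)=0$ satisfies $u(-T) = \dot u(0)\,\mathfrak R'(r_*(0))$, so $u(-T) = 0$ forces $\dot u(0) = 0$, whence $u \equiv 0$ by uniqueness for the initial value problem. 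Therefore $w \equiv 0$ and the solution is nondegenerate.

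The main obstacle — though a mild one — is getting the coefficient bookkeeping right in the two decoupled equations, particularly confirming that the $D$-gradient cross-term in \eqref{linp} is harmless: it is parallel to $\dot x_* \parallel w_0$, so it disappears entirely from the $v$-equation and contributes the clean term $\delta'(r_*)\dot r_* u$ to the $u$-equation, matching \eqref{lin} exactly. The other point requiring a little care is the reduction-of-order step: one must use that $r_*$ stays strictly positive on the \emph{closed} interval $[-T,0]$ (which holds since $x_*$ avoids collision on $[-T,0]$) so that the change of variables $v = r_* v_1$ is legitimate throughout, and then argue that the first-order linear homogeneous ODE for $\dot v_1$ with an interior zero has only the trivial solution. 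No new hypotheses beyond those already in force are needed, and in particular assumption \textbf{[D$_2$]} plays no role here.
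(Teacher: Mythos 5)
Your proposal is correct and follows essentially the same route as the paper: project onto $w_0^\perp$ to get the scalar equation $\ddot v+\delta(r_*)\dot v+r_*^{-3}v=0$ solved by $r_*>0$, kill $v$ by reduction of order, then identify the remaining $w_0$-component with a solution of \eqref{lin} and conclude via $\mathfrak R'\neq 0$ from Lemma \ref{lem41}. The only cosmetic remark is that $\mathfrak R'$ is a derivative with respect to the final speed, so it should be evaluated at $\dot r_*(0)$ rather than $r_*(0)$ — a slip present in the paper as well, and harmless since Lemma \ref{lem41}(ii) gives $\mathfrak R'<0$ everywhere on its domain.
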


\section{Three cornerstones supporting the proof}\label{sec22}

The purpose of this section is to bring forward three important results, labelled as Propositions \ref{prop11}, \ref{prop2} and \ref{prop12}, which will hold up the proof of Theorem \ref{th1}{\em (b)} in Section \ref{sec555}. In order to keep the pace of the exposition their proofs will be postponed to Sections \ref{sec5}-\ref{sec7}, in the second part of the paper. 
\begin{proposition}\label{prop11}{Nonrectilinear solutions  of \eqref{eu1} are globally defined in the past. With other words, if the nonrectilinear solution $x:]\alpha,0]\to\mathbb R^2\backslash\{0\}$ is maximal in the past, then $\alpha=-\infty$. }
\end{proposition}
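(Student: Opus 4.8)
The plan is to argue by contradiction. Suppose $x:\,]\alpha,0]\to\mathbb R^2\backslash\{0\}$ is a nonrectilinear solution of \eqref{eu1}, maximal in the past, with $\alpha>-\infty$; then Lemma \ref{lem31} gives $\liminf_{t\downarrow\alpha}|x(t)|=0$, so it will be enough to bound $|x|$ away from $0$ on $]\alpha,0]$. As a preliminary I would record that the angular momentum $c(t):=\det(x(t),\dot x(t))$ is well controlled: since the motion is not rectilinear $c\not\equiv0$, and since $c$ solves the linear equation $\dot c=-D(x(t))c$, it never vanishes and keeps a fixed sign; integrating it and using nonnegativity and boundedness of $D$ together with $\alpha>-\infty$,
$$|c(0)|\le|c(t)|\le|c(0)|\,e^{D_*|\alpha|}\,,\qquad t\in\,]\alpha,0]\,,$$
where $D_*$ is an upper bound for $D$. (This is the only place where finiteness of $\alpha$ is used directly.)

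The heart of the argument is a change of independent variable. In polar coordinates $x=r(\cos\theta,\sin\theta)$ one has $c=r^2\dot\theta$, so $\dot\theta$ has a fixed sign and never vanishes, and $\theta$ maps $]\alpha,0]$ diffeomorphically onto an interval $J$ with endpoints $\theta(0)$ and $\theta_\alpha:=\lim_{t\downarrow\alpha}\theta(t)$, the latter possibly infinite; I would take $\theta$ as the new time. Writing $u:=1/r$ as a function of $\theta$ and $'=d/d\theta$, a short computation from the radial and angular components of \eqref{eu1} shows that the friction terms cancel — one has $\dot r=-c\,u'$, and the summand $D\dot r$ of the radial equation is exactly cancelled by the summand that $\dot c=-D(x)c$ contributes to $\ddot r$ — leaving the Binet-type equation
$$u''+u=\frac{1}{c(\theta)^2}=:g(\theta)\,.$$
Since $dc/d\theta=\dot c/\dot\theta=-D(x)\,r^2$ has a fixed sign, $c(\theta)$, hence $g$, is monotone along $J$; and by the bound above $g$ takes values in a compact subinterval of $]0,+\infty[$, so $g$ is bounded with $\int_J|g'|<\infty$.

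It then remains to use the elementary fact that the forced oscillator $u''+u=g$ has uniformly bounded solutions whenever $g$ is bounded and of bounded variation. Variation of parameters gives
$$u(\theta)=u(\theta_0)\cos(\theta-\theta_0)+u'(\theta_0)\sin(\theta-\theta_0)+\int_{\theta_0}^{\theta}\sin(\theta-s)\,g(s)\,ds\,,\qquad\theta_0:=\theta(0)\,,$$
and expanding $\sin(\theta-s)=\sin\theta\cos s-\cos\theta\sin s$ and integrating by parts turns the Duhamel integral into boundary terms plus $\int g'(s)\sin s\,ds$, $\int g'(s)\cos s\,ds$, all bounded by $2\sup_J|g|+\int_J|g'|$, uniformly in $\theta\in J$. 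Hence $u$ is bounded on $J$, say $u\le M$, so $|x|=r=1/u\ge 1/M>0$ throughout $]\alpha,0]$, contradicting $\liminf_{t\downarrow\alpha}|x(t)|=0$; therefore $\alpha=-\infty$.

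The step I expect to be the genuine obstacle is precisely the last one when $\theta_\alpha=\pm\infty$, i.e.\ for an orbit winding around the origin infinitely often within the finite time span $]\alpha,0]$: then $J$ is unbounded and the crude estimate $|u|\lesssim 1+|\theta|$ is useless, and one really needs the monotonicity — hence bounded variation — of $g$ to keep the Duhamel integral bounded. The rest is routine verification of the friction cancellation that yields the clean Binet equation and of the polar-coordinate bookkeeping.
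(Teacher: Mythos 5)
Your argument is correct, but it takes a genuinely different route from the paper's. The two computations on which everything hinges both check out: the friction terms really do cancel in the Binet substitution (from $\dot r=-cu'$ and $\dot c=-D(x)c$ one gets $\ddot r=Dcu'-c^2u^2u''$, and the $Dcu'$ term exactly kills $D\dot r=-Dcu'$ in the radial equation, leaving $u''+u=1/c(\theta)^2$), and the forcing $g=1/c^2$ is monotone in $\theta$ (since $dc/d\theta=-D(x)r^2$ keeps a fixed sign) and pinched between $1/\bigl(c(0)^2e^{2D_*|\alpha|}\bigr)$ and $1/c(0)^2$ --- which is precisely where nonrectilinearity ($c(0)\neq0$) and the finiteness of $\alpha$ enter. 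The bounded-variation estimate for the Duhamel term then bounds $u=1/r$ uniformly on $J$ even when the orbit winds infinitely often, contradicting Lemma \ref{lem31}. The paper instead stays in the time variable: from $\frac{d}{dt}(p\dot r)\geq p\bigl(c(0)^2/r^3-1/r^2\bigr)\geq -M$ it deduces that $\dot r$ is bounded below, hence $r(t)\to 0$ as $t\downarrow\alpha$ and $\dot r>0$ near $\alpha$; it then introduces the modified energy $\hat h=\frac12\dot r^2+c(0)^2/(2r^2)-1/r$, which must blow up as $t\downarrow\alpha$ yet satisfies $\dot{\hat h}\geq-2D_*\hat h$, and Gronwall's lemma gives the contradiction. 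Your version buys an explicit quantitative lower bound for $r$ in terms of $c(0)$, $D_*$ and $|\alpha|$, and makes transparent why spiralling into the collision is impossible for nonzero angular momentum; the paper's version avoids the change of independent variable and the bounded-variation lemma for the forced oscillator, relying only on elementary differential inequalities in $t$. Both proofs use the same hypotheses, namely {\bf [D$_{1}$]} and $c(0)\neq0$.
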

Proposition \ref{prop11} was previously proved in \cite[Proposition 2.1]{MarOrtReb} assuming that $D$ is constant, and in  \cite[Proposition 2.1]{MarOrtReb2} when $D=D(|x|)$ depends only on the height of the particle. Notice that this result does not need assumptions  {\bf [D$_{1-2}$]}. Nonrectilinear solutions are also globally defined in the future; however, we shall  not need this fact in our analysis. 

\medbreak

A second ingredient which shall be needed in the next section concerns the existence of a priori bounds for the final speed of solutions joining two given heights in a given flight time. More precisely, assume that $r_A,r_B>0$ (in addition to $T>0$) are fixed and consider the boundary conditions

\begin{equation}\label{eu12}
	|x(-T)|=r_A,\qquad|x(0)|=r_B.	
\end{equation} 

\begin{proposition}\label{prop2}
	{There exists some $M>0$ (depending on $r_A$, $r_B$ and $T$ but not on $x$), such that whenever $x:[-T,0]\to\mathbb R^2\backslash\{0\}$ is a solution of \eqref{eu1}-\eqref{eu12} then $|\dot x(0)|<M$.
	}
\end{proposition}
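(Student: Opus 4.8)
The plan is to argue by contradiction. Assume the conclusion fails; then, for the given $r_A,r_B,T$, there is a sequence of solutions $x_n:[-T,0]\to\mathbb R^2\backslash\{0\}$ of \eqref{eu1}-\eqref{eu12} with $v_n:=|\dot x_n(0)|\to+\infty$, and the goal is to reach a contradiction. Write $r_n(t):=|x_n(t)|$, let $c_n(t):=\det(x_n(t),\dot x_n(t))$, and set $E_n(t):=\tfrac12|\dot x_n(t)|^2-1/r_n(t)$. The first step is to differentiate $E_n$ along the flow: using \eqref{eu1} one finds $\dot E_n=-D(x_n)|\dot x_n|^2\le 0$, so $E_n$ is nonincreasing and therefore
$$E_n(t)\ \ge\ E_n(0)\ =\ \tfrac12 v_n^2-\tfrac1{r_B}\ \xrightarrow[n\to\infty]{}\ +\infty\qquad(t\in[-T,0]);$$
in particular $E_n>0$ on all of $[-T,0]$ for $n$ large. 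From this I would extract two facts. (1) Since $|\dot x_n(t)|^2=2E_n(t)+2/r_n(t)\ge 2E_n(0)$, the arc length $L_n:=\int_{-T}^0|\dot x_n|\,dt$ satisfies $L_n\ge T\sqrt{2E_n(0)}\to+\infty$. (2) Writing $\rho_n:=r_n^2$, one has $\ddot\rho_n=2|\dot x_n|^2-D(x_n)\dot\rho_n-2/r_n$, so at any critical point of $r_n$ (where $\dot\rho_n=0$) one gets $\ddot\rho_n=2(|\dot x_n|^2-1/r_n)=|\dot x_n|^2+2E_n>0$; hence $r_n$ has no interior local maximum, and there is $m_n\in[-T,0]$ with $r_n$ nonincreasing on $[-T,m_n]$ and nondecreasing on $[m_n,0]$. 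In particular $r_n\le\max(r_A,r_B)=:R_0$ on $[-T,0]$, and $\int_{-T}^0|\dot r_n|\,dt=(r_A-d_n)+(r_B-d_n)\le r_A+r_B$, with $d_n:=r_n(m_n)$.

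Next I would pass to polar coordinates $x_n=r_n(\cos\theta_n,\sin\theta_n)$, so that $|\dot x_n|^2=\dot r_n^2+r_n^2\dot\theta_n^2$ and $r_n^2\dot\theta_n=c_n$. Since $\dot c_n=-D(x_n)c_n$, the angular momentum $c_n$ keeps a constant sign, hence $\theta_n$ is monotone and $\int_{-T}^0|\dot\theta_n|\,dt=|\theta_n(0)-\theta_n(-T)|=:\Delta_n$. Using $|\dot x_n|\le|\dot r_n|+r_n|\dot\theta_n|$ together with the bounds just obtained,
$$L_n\ \le\ \int_{-T}^0|\dot r_n|\,dt+\int_{-T}^0 r_n|\dot\theta_n|\,dt\ \le\ (r_A+r_B)+R_0\,\Delta_n\,.$$
Because $L_n\to+\infty$, it now suffices to prove that $\{\Delta_n\}$ is bounded — that is the contradiction.

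To bound $\Delta_n$, I would split it as $\Delta_n^-+\Delta_n^+$, the two pieces coming from the monotone legs $[-T,m_n]$ and $[m_n,0]$ (of the same sign, $\dot\theta_n$ being of constant sign). On the outgoing leg $r_n$ can be used as independent variable, and from $r_n^2\dot r_n^2=2E_nr_n^2+2r_n-c_n^2$ one gets
$$\Delta_n^+\ =\ \int_{d_n}^{r_B}\frac{|c_n|\,dr}{r\sqrt{\,2E_nr^2+2r-c_n^2\,}}\,,$$
$E_n,c_n$ being taken at the instant where $r_n=r$. The crucial point is that $E_n>0$ forces this to be an integral of the same type as $\int dw/\sqrt{1-w^2}$: freezing $E_n,c_n$ at their pericenter values and using the identity $c_n(m_n)^2=2E_n(m_n)d_n^2+2d_n$ (valid since $\dot r_n(m_n)=0$), the substitution $r\mapsto d_n/r$ turns $\Delta_n^+$ into an integral bounded by an absolute constant, no matter how small $d_n$ is. Friction enters only through the bounded factors $\exp(\pm\int D(x_n))$ governing the variation of $c_n$ — recall $|c_n(t)|\le|c_n(0)|e^{D_*T}\le r_B v_n e^{D_*T}$ — and through the monotone variation of $E_n$; tracking these one still arrives at $\Delta_n^+\le C$, and likewise $\Delta_n^-\le C$, for a constant $C=C(r_A,r_B,D_*,T)$. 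Thus $\Delta_n\le 2C$ is bounded, contradicting $L_n\to+\infty$.

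The hard part, I expect, is precisely this last estimate: one must exclude that the orbit winds around the origin arbitrarily many times during its single close pericenter passage, with only the positivity of the energy and the a priori control $|c_n(t)|\le|c_n(0)|e^{D_*T}$ of the angular momentum to rely on; the delicate work is to keep the variation of $E_n$ and $c_n$ along each leg under control so that the integral bound is uniform in $n$. An alternative would be to pass to Levi-Civita coordinates $x=z^2$ (with $dt=|z|^2\,ds$), which regularize the collision and turn \eqref{eu1} into $z''+D(z^2)|z|^2z'=\tfrac12 E\,z$; there the estimate is transparent, but one pays the price of rephrasing the flight time as the side constraint $\int|z|^2\,ds=T$.
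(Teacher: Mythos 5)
Your reduction of the problem to a uniform bound on the swept angle is correct and cleanly executed: the monotonicity $\dot E_n=-D(x_n)|\dot x_n|^2\le 0$, the fact that $E_n>0$ forces every critical point of $r_n$ to be a strict minimum (so $r_n\le\max(r_A,r_B)$ and $\int_{-T}^0|\dot r_n|\,dt\le r_A+r_B$), and the inequality $T\sqrt{2E_n(0)}\le L_n\le (r_A+r_B)+R_0\Delta_n$ are all right, and they even dispose of the rectilinear case ($\Delta_n=0$) immediately. The gap is the step you yourself flag as ``the hard part'': the uniform bound $\Delta_n^{\pm}\le C$. As written it is asserted, not proved, and the comparison you propose does not close by itself. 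Along the outgoing leg, as $r$ increases from $d_n$ to $r_B$, both $E_n$ and $|c_n|$ decrease; in the integrand $|c_n|/\bigl(r\sqrt{2E_nr^2+2r-c_n^2}\bigr)$ the decrease of $E_n$ makes the denominator \emph{smaller} while the decrease of $c_n^2$ makes it \emph{larger}, so freezing both at their pericenter values gives no one-sided inequality. In addition, the pericenter value $E_n(m_n)$ can exceed $E_n(0)$ by an amount controlled only through $\int D(x_n)|\dot x_n|^2\,dt$, whose estimate near a close pericenter passage requires a separate Gronwall-type argument (compare the trick with $\hat h$ in the proof of Proposition \ref{prop11}). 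None of this is fatal --- the frictionless intuition (a positive-energy orbit sweeps less than $2\pi$) strongly suggests the bound is true --- but the proof is genuinely incomplete at its central point.

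It is worth noting that the paper avoids this difficulty entirely. Instead of comparing arc length with winding, it splits $|\dot x(0)|^2=\dot r(0)^2+c(0)^2/r_B^2$ and bounds each piece by propagating radial information backwards in time: if $\dot r_n(0)\to-\infty$ then $\dot r_n\to-\infty$ uniformly and $r_n(-T)\to+\infty$ (Lemma \ref{lem8}); if $\dot r_n(0)\to+\infty$ the turnaround and the return to the height $r_B$ are shown to occur at times $o(1)$ near $t=0$ with $\dot r_n\to-\infty$ there, reducing to the previous case (Lemma \ref{lem9}); and if $|c_n(0)|\to\infty$ with $\dot r_n(0)$ bounded, the monotonicity of the effective potential $v=-1/r+c^2/(2r^2)$ along monotone radial legs (Lemma \ref{lem4}) forces $r_n(-T/2)\to+\infty$ (Lemma \ref{lem44}). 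No control of the swept angle is ever needed. If you want to complete your route, the Levi--Civita variables you mention at the end are indeed the natural place to obtain the uniform angular estimate; otherwise I would recommend replacing the $\Delta_n$ bound by the paper's radial scheme, which your energy observations already set up.
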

We do not know whether a similar property holds for the initial (in the place of final) speed, and this is the reason why we use time-backwards Poincar\'e maps instead of their more traditional time-forward cousins. Proposition \ref{prop2} ensures that no families of solutions of \eqref{eu1}-\eqref{eu12} blow up to infinity, thus  playing an important role in the application of continuation arguments of topological degree.

\medbreak

We go back now to Proposition \ref{prop11}. There is an alternative way to present this result by using the language of Poincar\'e maps, which, throughout this paper, will be referred to backward time. Thus, given $T>0$, the associated Poincar\'e map $\mathcal P=\mathcal P_T$ maps an initial condition $(x_0,\dot x_0)\in(\mathbb R^2\backslash\{0\})\times\mathbb R^2$  into the pair $(x(-T),\dot x(-T))\in\mathbb R^2\times\mathbb R^2\equiv\mathbb R^4$. Here $x=x(t)$ stands for the solution of \eqref{eu1} satisfying the  initial -or rather, {\em final\,}-  conditions
\begin{equation}\label{ic}
	x(0)=x_0,\qquad \dot x(0)=\dot x_0.\tag{{\em FC}}	
\end{equation}
The natural domain of $\mathcal P$ is the set $\Omega=\Omega_T$ of points $(x_0,\dot x_0)\in(\mathbb R^2\backslash\{0\})\times\mathbb R^2$ such that the solution of \eqref{eu1}-(\ref{ic}\,) can be extended to $t=-T$. The usual continuous dependence theorems state that $\Omega$ is open in $\mathbb R^4$ and $\mathcal P:\Omega\to\mathbb R^4$ is continuous. Proposition \ref{prop11} above can be reformulated by saying that 
\begin{equation}\label{Om}
	\Omega\supset\{(x_0,\dot x_0)\in\mathbb R^2\times\mathbb R^2:\ x_0,\dot x_0\text{ are linearly independent}\}. 	
\end{equation}

We denote by $\mathcal X:\Omega\to\mathbb R^2$ the first two (position) components of $\mathcal P$. With other words, $\mathcal X=\Pi\circ\mathcal P$, where $\Pi:\mathbb R^2\times\mathbb R^2\to\mathbb R^2$ stands for the projection $(x,\dot x)\mapsto x$. Our third  postulate will be the following:
\begin{proposition}\label{prop12}{$\mathcal X$ admits a continuous extension $\bar{\mathcal X}:(\mathbb R^2\backslash\{0\})\times\mathbb R^2\to\mathbb R^2$. Moreover, $\bar{\mathcal X}(x_0,\mathbb R\, x_0)\subset[0,+\infty[\,x_0$, for every $x_0\in\mathbb R^2\backslash\{0\}.$}
\end{proposition}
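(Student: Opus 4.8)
\textbf{Proof proposal for Proposition \ref{prop12}.}

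The plan is to desingularize the collision via the Levi--Civita map $z\mapsto z^2$, turning the damped Kepler equation into a regular second-order system near the origin, and then to read off the two assertions from the regularized dynamics. Concretely, if $x=x(t)$ solves \eqref{eu1} on a time interval ending at $t=0$, introduce a new (Sundman-type) time variable $\tau$ with $dt/d\tau=|x|$ and a complex coordinate $z$ with $z^2=x$; a standard computation (carried out in the frictionless case by Levi-Civita and by Sim\'o \cite{Sim}, and revisited by Albouy \cite{Alb,Alb2}) shows that in the variable $z$ the transformed equation, including the drag term $D(x)\dot x$, becomes a smooth second-order ODE in $(z,z')$ provided the friction can be regularly continued through the origin. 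Here is exactly where remark (i) after Theorem \ref{th1} is used: assumption {\bf [D$_2$]} guarantees that $z\mapsto D(z^2)$ extends to a $C^1$ function on all of $\mathbb C\equiv\mathbb R^2$, so the transformed vector field is $C^1$ up to and across $z=0$, and solutions depend continuously on initial data even for those passing through the origin.

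The key steps, in order, would be: (1) Write down the Levi--Civita change of variables precisely and verify that \eqref{eu1} becomes a regular system $z''=F(z,z')$ with $F$ continuous (indeed $C^1$) on a full neighbourhood of $z=0$ in $(\mathbb C\times\mathbb C)$; the energy-type relation $|z'|^2/|z|^2 \sim$ (Kepler energy) is what keeps $z'$ bounded as $z\to0$. (2) Observe that a final condition $x(0)=x_0\neq 0$, $\dot x(0)=\dot x_0$ lifts to exactly two final conditions $z(0)=\pm z_0$ (with $z_0^2=x_0$) and a corresponding $z'(0)$, and that the solution of \eqref{eu1} reaching time $-T$ in the original variable corresponds to the regularized solution reaching the appropriate value of the new time; because the regularized problem has no singularity, this regularized solution always exists on a time interval covering the relevant $\tau$-range when we start close to the collision set, and by continuous dependence $\mathcal X(x_0,\dot x_0)=z(-T)^2$ (in regularized time) extends continuously to all $(x_0,\dot x_0)\in(\mathbb R^2\setminus\{0\})\times\mathbb R^2$, including the points where the original solution would have hit $x=0$. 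This yields the continuous extension $\bar{\mathcal X}$. For (3), the collinearity statement: if $\dot x_0=\lambda x_0$ then the lifted data $(z(0),z'(0))=(\pm z_0,\pm\tfrac{\lambda}{2}z_0)$ are real multiples of $z_0$ (identifying the ray through $x_0$ with the ray through $z_0^2$), and since $F$ respects the real line through $z_0$ (the transformed equation restricted to a ray is exactly the one-dimensional equation \eqref{1d}, which by Section \ref{sec44} has solutions staying on the positive half-ray $r>0$), the regularized orbit remains on $\mathbb R\, z_0$; squaring sends $\mathbb R\, z_0$ onto $[0,+\infty[\,z_0^2=[0,+\infty[\,x_0$. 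Hence $\bar{\mathcal X}(x_0,\mathbb R\, x_0)\subset[0,+\infty[\,x_0$.

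The main obstacle I expect is step (1)--(2): making rigorous that the \emph{drag} term survives the Levi--Civita regularization as a $C^1$ perturbation and that the rescaled time stays controlled. One must check that $\dot x\,dt=2z\,z'\,d\tau/|z|$ combines with $D(x)=D(z^2)$ so that the friction contribution to $z''$ is $-2D(z^2)\langle z,z'\rangle z'/|z|^2$ or similar, and verify that the apparent $|z|^{-2}$ is cancelled using the Lagrange-type identity together with the boundedness {\bf [D$_1$]} and the vanishing rate in {\bf [D$_2$]} (this is the only place {\bf [D$_2$]} enters, consistent with remark (i)). A secondary technical point is matching the original time $T$ with the regularized time along a solution that passes through $z=0$: the Sundman relation $t=\int|x|\,d\tau$ must be inverted, and one needs that $dt/d\tau=|z|^2>0$ except at isolated instants, so that $t(\tau)$ is still a homeomorphism and the value of $\tau$ corresponding to $t=-T$ depends continuously on the data. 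Once these regularization estimates are in place, both conclusions of the Proposition follow by routine continuity and the symmetry of the squaring map, with the quantitative input supplied entirely by Section \ref{sec44}.
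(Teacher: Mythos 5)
Your overall route is the same as the paper's: lift the problem through the Levi--Civita map $z\mapsto z^2$ with Sundman time, use {\bf [D$_2$]} to make the regularized vector field $C^1$ across $z=0$, conclude by continuous dependence, and get the collinearity statement from the invariance of the real line through $z_0$ under the regularized flow. Points (1) and (3) of your plan are essentially what Section \ref{sec7} does, including the observation that on a ray the regularized planar system reduces to the one-dimensional system of Section \ref{sec6}.

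However, there is a genuine gap at the step you dismiss as ``a secondary technical point.'' Knowing that $t(\tau)=\int |z|^2\,d\tau$ is monotone and continuous is not enough: you must show that its \emph{range covers} $[-T,0]$, i.e.\ that the maximal backward regularized solution accumulates at least $T$ units (in fact infinitely many units) of physical time. A priori two things could go wrong: the regularized solution $(w,E)$ could cease to exist at a finite regularized time $\hat A>-\infty$ without the physical time having reached $-T$; or it could exist for all $s<0$ while $\int_{-\infty}^0|w(s)|^2\,ds<T$ (for instance if $|w|$ decayed too fast). Ruling this out is the content of Lemma \ref{lem63} ($\int_{\hat A}^0 u(s)^2\,ds=+\infty$ for maximal solutions of (\ref{LC}) on $\mathcal M$), whose proof is the real work of Sections \ref{sec8}--\ref{sec6}: it needs the energy bound at collision (Lemma \ref{xlem51}{\em (ii)}, which itself requires a Gronwall-type argument because the drag $-\delta(r)\dot r$ is unbounded near $r=0$), the Sundman-type asymptotics $r(t)\sim ((9/2)(t-\alpha)^2)^{1/3}$ (Lemma \ref{xlem51}{\em (iii)} and Corollary \ref{xcor58}) to show each collision--ejection arc consumes only finitely much regularized time, the lower bound $\omega-\alpha>-1/(2h(\omega))$ on the physical length of each bounce (Lemma \ref{lem52}), and the monotonicity of $E$ to prevent infinitely many bounces from fitting into finite physical time. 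None of this follows from ``routine continuity'' or from Section \ref{sec44} alone. Until you supply an argument of this kind, the point $S_*$ at which the regularized solution realizes physical time $-T$ --- and hence the value $\bar{\mathcal X}(x_0^*,\lambda x_0^*)=w_*(S_*)^2$ --- is not known to exist, and the extension is not defined. (A minor slip: the lifted velocity is $z'(0)=|x_0|\lambda z_0/2$, not $\lambda z_0/2$; this does not affect your collinearity argument.)
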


Thus, collinear solutions may collide  with the singularity, but admitting that they bounce back at the collision one obtains a flow that is continuous in its position components. This result will be obtained with the help of (Levi-Civita's) regularization theory. 

\medbreak

Before closing this section we point out a consequence of the (still unproved) Propositions \ref{prop11} and \ref{prop12} which will play an important role in the proof of Theorem \ref{th1}{\em (b)}. We go back to the situation described in Lemma \ref{lem322} and suppose that $x_n:[-T,0]\to\mathbb R^2\backslash\{0\}$ is a sequence of solutions of \eqref{eu1}. Therefore $(x_n(0),\dot x_n(0))\in\Omega_T$ for every $n$, and we further assume that $(x_n(0),\dot x_n(0))\to (x_0,\dot x_0)\in\partial\Omega_T$. For each $n\in\mathbb N$ we denote by $\Delta\theta_n$ the angle swept by $x_n$ on the time interval $[-T,0]$. With formulas,
$$\Delta\theta_n:=\theta_n(0)-\theta_n(-T),$$
where $\theta_n(t):=\arg(x_n(t))$ is a continuous choice of the argument on $x_n$. 

\begin{corollary}\label{lem32}
	{If there are constants $r_A,r_B>0$ such that $|x_n(-T)|=r_A$ and $|x_n(0)|=r_B$ for every $n\in\mathbb N$ then $\lim\inf_{n\to+\infty}|\Delta\theta_n|\geq 2\pi$.}
	
	\begin{proof}By assumption, $|x_n(0)|=r_B\ \forall n\in\mathbb N$, and we see that $|x_0|=r_B$. The combination of Proposition \ref{prop11} -in the form given in \eqref{Om}- and Proposition \ref{prop12}, implies that 
		$$\dot x_0\in\mathbb R x_0\,,\qquad x_n(-T)\to\frac{r_A}{r_B}x_0\text{ as }n\to+\infty.$$ 
		
		Consequently, either $\lim\inf_{n\to+\infty}|\Delta\theta_n|\geq 2\pi$ as claimed, or otherwise $\lim_{n\to+\infty}\Delta\theta_n= 0$. In the latter situation the functions $\xi_n(t):=\langle x_n(t),x_0\rangle/r_B$ are positive for $n$ big enough, and they satisfy 
		$$\xi_n(-T)\to r_A,\qquad\qquad \xi_n(0)\to r_B,\qquad \min_{-T\leq t\leq 0}\xi_n(t)\to 0,\qquad\qquad\qquad n\to+\infty,$$
		the last fact following from {\em (i)}. On the other hand, \eqref{eu1} implies that
		$$\ddot \xi_n+D(x_n)\dot \xi_n=-\frac{\xi_n}{|x_n(t)|^3}<0,$$
		which, when evaluated at the point where $\xi_n$ attains its minimum leads to contradiction. It concludes the reasoning.
	\end{proof}
\end{corollary}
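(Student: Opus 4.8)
The plan is to argue by contradiction in two stages, exactly as the statement's structure suggests. Suppose the conclusion fails; then, passing to a subsequence, $\Delta\theta_n$ is bounded, and since $|\theta_n(0)-\theta_n(-T)|<2\pi$ for all arcs, we may assume $\Delta\theta_n\to\Delta\theta_*$ for some $\Delta\theta_*$ with $|\Delta\theta_*|<2\pi$. First I would pin down the geometry at the limit. Since $|x_n(0)|=r_B$ for all $n$ and $(x_n(0),\dot x_n(0))\to(x_0,\dot x_0)$, we get $|x_0|=r_B$, so in particular $x_0\neq 0$ and the pair $(x_0,\dot x_0)$ genuinely lies in $(\mathbb R^2\setminus\{0\})\times\mathbb R^2$. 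Now invoke Proposition \ref{prop11} in the form \eqref{Om}: if $x_0$ and $\dot x_0$ were linearly independent, then $(x_0,\dot x_0)\in\Omega_T$, which is open, contradicting $(x_0,\dot x_0)\in\partial\Omega_T$. Hence $\dot x_0\in\mathbb R\,x_0$. Then Proposition \ref{prop12} applies: $\bar{\mathcal X}$ is continuous on $(\mathbb R^2\setminus\{0\})\times\mathbb R^2$ and extends $\mathcal X$, so $x_n(-T)=\mathcal X(x_n(0),\dot x_n(0))\to\bar{\mathcal X}(x_0,\dot x_0)$; and since $\dot x_0\in\mathbb R\,x_0$, the second assertion of Proposition \ref{prop12} forces $\bar{\mathcal X}(x_0,\dot x_0)\in[0,+\infty[\,x_0$. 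Because $|x_n(-T)|=r_A>0$ for all $n$, the limit has norm $r_A$, so $\bar{\mathcal X}(x_0,\dot x_0)=(r_A/r_B)\,x_0$; in particular $x_n(-T)\to(r_A/r_B)\,x_0$, which is a positive multiple of $x_0$.

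Second, I would rule out the case $\Delta\theta_*=0$ (any other value with $|\Delta\theta_*|<2\pi$ and $\Delta\theta_*\neq 0$ will be excluded separately — see the next paragraph). Assume $\Delta\theta_n\to 0$. The idea is to project onto the $x_0$-direction and exploit strict concavity. Set $\xi_n(t):=\langle x_n(t),x_0\rangle/r_B$. Both endpoints are controlled: $\xi_n(-T)=\langle x_n(-T),x_0\rangle/r_B\to (r_A/r_B)\langle x_0,x_0\rangle/r_B=r_A$ and $\xi_n(0)=\langle x_n(0),x_0\rangle/r_B\to\langle x_0,x_0\rangle/r_B=r_B$. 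Moreover, since $|x_n(-T)|=r_A$, $|x_n(0)|=r_B$ and $\Delta\theta_n\to 0$, the curve $x_n$ stays (for large $n$) in a narrow cone around the ray $\mathbb R_{>0}x_0$, so $\xi_n>0$ on $[-T,0]$ and $\xi_n(t)$ is, up to a uniformly small error, $|x_n(t)|$; in particular $\min_{[-T,0]}\xi_n\to 0$ because, by Lemma \ref{lem322} (whose hypothesis is met: the limit solution cannot be extended to $[-T,0]$, as $(x_0,\dot x_0)\notin\Omega_T$), $\min_{[-T,0]}|x_n|\to 0$. Now the crucial inequality: taking the inner product of \eqref{eu1} with the fixed vector $x_0$ gives
\[
\ddot\xi_n(t)+D(x_n(t))\,\dot\xi_n(t)=-\frac{\langle x_n(t),x_0\rangle}{|x_n(t)|^3\,r_B}=-\frac{\xi_n(t)}{|x_n(t)|^3}<0,
\]
valid wherever $\xi_n>0$, hence on all of $[-T,0]$ for large $n$. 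Evaluate this at an interior point $t_n\in]-T,0[$ where $\xi_n$ attains its minimum over $[-T,0]$: there $\dot\xi_n(t_n)=0$ and $\ddot\xi_n(t_n)\geq 0$, contradicting the displayed strict inequality — unless the minimum is attained only at an endpoint, which is impossible because $\xi_n(-T)\to r_A>0$ and $\xi_n(0)\to r_B>0$ while $\min\xi_n\to 0$, forcing the minimizer into the interior for $n$ large.

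Finally, to close the gap — the statement claims $\liminf|\Delta\theta_n|\geq 2\pi$, so I must also exclude every limit value $\Delta\theta_*\in(-2\pi,2\pi)\setminus\{0\}$, not merely $\Delta\theta_*=0$. Here I would argue that $\dot x_0\in\mathbb R\,x_0$ together with $x_n(-T)\to(r_A/r_B)x_0$ already pins down the limiting curve's endpoints to the \emph{same} ray through the origin; combined with the bound $|\Delta\theta_n|<2\pi$, the only way a sequence of arcs with collinear endpoints can have a nonzero limiting swept angle strictly below $2\pi$ is the degenerate bouncing scenario, in which $x_n$ passes arbitrarily close to the origin and $\Delta\theta_n\to 0$ or $\Delta\theta_n\to\pm 2\pi$ (a reflection at collision reverses the radial motion without turning). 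More concretely: since $\dot x_0$ is radial, the limiting generalized solution starts radially at $x_0$; if it never reached the collision it would be a genuine rectilinear solution defined on $[-T,0]$, forcing $(x_0,\dot x_0)\in\Omega_T$, a contradiction; so it does reach collision, and by Proposition \ref{prop12} it bounces back along the same ray, whence $\Delta\theta_n$ accumulates only at $0$ or at $\pm 2\pi$, and the case $0$ was just excluded. \textbf{The main obstacle} I anticipate is making this last reduction rigorous — controlling the argument $\theta_n(t)$ uniformly and showing that, near a collision, the nearly-radial incoming and outgoing branches cannot contribute a net rotation strictly between $0$ and $2\pi$; the concavity trick of the second paragraph handles the $\Delta\theta_*=0$ branch cleanly, but the full dichotomy ``$\liminf|\Delta\theta_n|\geq 2\pi$ or $\Delta\theta_n\to 0$'' needs the geometry of the regularized bounce from Proposition \ref{prop12} stated carefully enough to say the outgoing ray coincides with the incoming one.
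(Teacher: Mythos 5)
Your first two paragraphs reproduce the paper's own argument: the same use of \eqref{Om} together with the openness of $\Omega_T$ to conclude $\dot x_0\in\mathbb R\,x_0$, the same application of Proposition \ref{prop12} to get $x_n(-T)\to(r_A/r_B)x_0$, and the same projection $\xi_n=\langle x_n,x_0\rangle/r_B$ with the contradiction at an interior minimum (your reading of the paper's dangling reference ``(i)'' as Lemma \ref{lem322} is the right one, and your justification that $\xi_n>0$ and $\min\xi_n\to0$ is correct). Those steps are fine.

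The third paragraph, however, manufactures an obstacle that is not there and then leaves it as an admitted gap. No analysis of bouncing solutions or of the regularized flow is needed to reduce to the case $\Delta\theta_n\to0$: the quantity $\Delta\theta_n=\theta_n(0)-\theta_n(-T)$ is determined modulo $2\pi$ by the two endpoints alone, and you have already shown that $x_n(0)\to x_0$ and $x_n(-T)\to(r_A/r_B)x_0$, both on the ray $]0,+\infty[\,x_0$. Hence $\arg x_n(0)$ and $\arg x_n(-T)$ both converge to $\arg x_0$ modulo $2\pi$, so $\dist(\Delta\theta_n,2\pi\mathbb Z)\to0$, and every accumulation point of a bounded subsequence of $\{\Delta\theta_n\}$ lies in $2\pi\mathbb Z$. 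If $\liminf_n|\Delta\theta_n|<2\pi$, some subsequence satisfies $|\Delta\theta_n|\le2\pi-\epsilon$ and therefore must converge to $0$; a limiting value in $(-2\pi,2\pi)\setminus\{0\}$ is impossible outright, not merely ``only in the bouncing scenario.'' (Note also that the corollary's $x_n$ are arbitrary solutions, so you should not invoke $|\Delta\theta_n|<2\pi$ as a standing hypothesis; the bound $2\pi-\epsilon$ comes from negating the conclusion.) This one-line observation is exactly why the paper dispatches the dichotomy in a single sentence; with it inserted in place of your third paragraph, your proof coincides with the paper's.
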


With the aim of speeding up our main arguments we put off the proofs of Propositions \ref{prop11}, \ref{prop2} and \ref{prop12} to Sections \ref{sec5}-\ref{sec7}. Nevertheless we shall rely on them on Section \ref{sec555} to prove Theorem \ref{th1}{\em (b)}.

\section{Brouwer degree and continuation from the rectilinear problem}\label{sec555}

In this section we shall prove Theorem \ref{th1} by relying on Propositions \ref{prop11}, \ref{prop12} and \ref{prop2} and using some basic techniques from Brouwer degree theory.  We refer the reader to the classical textbook \cite{Llo} or the recent treatise \cite{DinMaw} for comprehensive introductions to Brouwer degree.

\medskip

An important property of the Brouwer degree is its invariance by homotopies. If two problems are connected by a homotopy in such a way that solutions do not escape through the boundary, then they have the same degree. In particular, if the first problem has nonzero degree, then the second has a solution.
\medskip

In this section we shall need a particular consequence of this property that we describe next. Let $\mathcal U\subset\mathbb R^N$ be open (but not necessarily bounded), and let $$\Phi:[0,1]\times\mathcal U\to\mathbb R^N,\qquad (\lambda,u)\mapsto\Phi(\lambda,u),$$ be continuous and admit a continuously-defined derivative with respect to the $u$ variables, denoted  $\Phi'_u:[0,1]\times\mathcal U\to\mathbb R^{N\times N}$. We set $\Sigma:=\{(\lambda,u)\in[0,1]\times\mathcal U:\Phi(\lambda,u)=0\}$ and assume that 
\begin{enumerate}
	\item[{\em (a)}] There exists some $u_0\in\mathcal U$ such that $\Sigma\cap(\{0\}\times\mathbb R^N)=\{(0,u_0)\}$ and $\det(\Phi_u'(0,u_0))\not=0$.
	\item[{\em (b)}] There exists some constant $M>0$ such that $|u|<M$ for every $(\lambda,u)\in\Sigma$.
	\item[({\em c})] $\overline\Sigma\cap([0,1]\times(\partial{\mathcal U}))=\emptyset$.
\end{enumerate} 
\begin{lemma}\label{lem51}{Under the above, for each $\lambda\in[0,1]$ there exists some $u_\lambda\in\mathcal U$ such that $(\lambda,u_\lambda)\in\Sigma$. }
	\begin{proof}
(See Fig. 1(a)). Assumptions {\em (b)-(c)} guarantee that there exists some $\rho>0$ such that $u+\bar B(\rho)\subset\mathcal U$ for any $(\lambda,u)\in\Sigma$. Here, $\bar B(\rho)$ denotes the closed ball of radius $\rho$ in $\mathbb R^N$. Set
		$${\mathcal V}:=\{u\in\mathbb R^N\text{ such that }|u|<M\text{ and }u+\bar B(\rho)\subset\mathcal U\},$$
		which is open and bounded. Moreover, $\bar{\mathcal V}\subset\mathcal U$ and $\Sigma\subset[0,1]\times\mathcal V$. Therefore, the Brouwer degree  $\deg_B(\Phi(\lambda,\cdot),\mathcal V)$ does not depend on $\lambda\in[0,1]$. For $\lambda=0$, $\deg_B(\Phi(0,\cdot),\mathcal V)=\sign(\det\Phi_u'(0,u_0))=\pm 1$, and thus, 
		\begin{equation*}
		\deg_B(\Phi(\lambda,\cdot),\mathcal V)\not=0\text{ for every }\lambda\in[0,1].
		\end{equation*}
		 The result follows. 	\end{proof}
	
\end{lemma}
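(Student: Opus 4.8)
The statement to prove is Lemma~\ref{lem51}, which is essentially a packaged version of the standard homotopy-invariance argument for Brouwer degree, adapted to an \emph{unbounded} domain $\mathcal U$. The plan is as follows.

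\medbreak

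\textbf{Step 1: Reduce to a bounded subdomain.} The obstacle to applying Brouwer degree directly is that $\mathcal U$ need not be bounded, so $\deg_B(\Phi(\lambda,\cdot),\mathcal U)$ is not defined. I would exploit assumptions \emph{(b)} and \emph{(c)} to carve out a bounded open set $\mathcal V$ with $\bar{\mathcal V}\subset\mathcal U$ that still captures the whole zero set $\Sigma$. Concretely: since $\overline\Sigma$ is disjoint from the closed set $[0,1]\times\partial\mathcal U$ and (by \emph{(b)}) contained in the bounded region $[0,1]\times\{|u|<M\}$, a compactness argument gives a uniform $\rho>0$ such that $u+\bar B(\rho)\subset\mathcal U$ for every $(\lambda,u)\in\Sigma$. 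One then defines $\mathcal V:=\{u\in\mathbb R^N:|u|<M\text{ and }u+\bar B(\rho)\subset\mathcal U\}$, which is open, bounded, has $\bar{\mathcal V}\subset\mathcal U$, and contains the $u$-slice of $\Sigma$.

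\medbreak

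\textbf{Step 2: Invoke homotopy invariance.} With $\mathcal V$ in hand, the map $\Phi:[0,1]\times\mathcal V\to\mathbb R^N$ is an admissible homotopy: it is continuous, and because $\Sigma\subset[0,1]\times\mathcal V$ we have $\Phi(\lambda,u)\neq 0$ for all $\lambda\in[0,1]$ and all $u\in\partial\mathcal V$. Homotopy invariance of the Brouwer degree then yields that $\deg_B(\Phi(\lambda,\cdot),\mathcal V)$ is independent of $\lambda\in[0,1]$.

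\medbreak

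\textbf{Step 3: Compute the degree at $\lambda=0$ and conclude.} By assumption \emph{(a)}, the only zero of $\Phi(0,\cdot)$ in $\mathcal U$ (hence in $\mathcal V$) is $u_0$, and it is nondegenerate since $\det\Phi_u'(0,u_0)\neq 0$. The derivative formula for the degree at a regular value gives $\deg_B(\Phi(0,\cdot),\mathcal V)=\sign\det\Phi_u'(0,u_0)=\pm1\neq0$. Combined with Step~2, $\deg_B(\Phi(\lambda,\cdot),\mathcal V)\neq0$ for every $\lambda\in[0,1]$, so the solvability property of the degree furnishes, for each $\lambda$, some $u_\lambda\in\mathcal V\subset\mathcal U$ with $\Phi(\lambda,u_\lambda)=0$, i.e.\ $(\lambda,u_\lambda)\in\Sigma$.

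\medbreak

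The only delicate point is Step~1---extracting the \emph{uniform} tube radius $\rho$. If $\Sigma$ (equivalently $\overline\Sigma$) were compact this would be immediate from the distance function $(\lambda,u)\mapsto\dist(u,\partial\mathcal U)$ being continuous and positive there. A priori $\Sigma$ need only be closed, but assumption \emph{(b)} bounds its $u$-coordinates and assumption \emph{(c)} says $\overline\Sigma\subset[0,1]\times\mathcal U$; together these force $\overline\Sigma$ to be a closed subset of the bounded set $[0,1]\times\bar B(M)$ staying away from $\partial\mathcal U$, which is enough to run the compactness argument. Everything else is the textbook homotopy-invariance plus regular-value computation, for which I would simply cite \cite{Llo} or \cite{DinMaw}.
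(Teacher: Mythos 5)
Your proposal is correct and follows essentially the same route as the paper: extract a uniform tube radius $\rho$ from assumptions \emph{(b)}--\emph{(c)}, define the bounded open set $\mathcal V=\{u:|u|<M,\ u+\bar B(\rho)\subset\mathcal U\}$, apply homotopy invariance of the Brouwer degree on $\mathcal V$, and compute the degree at $\lambda=0$ via the regular-value formula. The only difference is that you spell out the compactness argument behind the existence of $\rho$, which the paper leaves implicit.
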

\begin{remark}
	{Well-known arguments going back to Leray-Schauder (\cite[Théorème Fondamental, p. 63]{LerSch}) show, under the assumptions above, the existence of a connected set of solutions sweeping all values of $\lambda$. This fact will not be used in our argument.}
\end{remark}

\begin{figure}[h!]
	\begin{center}
		\includegraphics[scale=1,trim={0.5cm 0cm 0cm 0cm},clip]{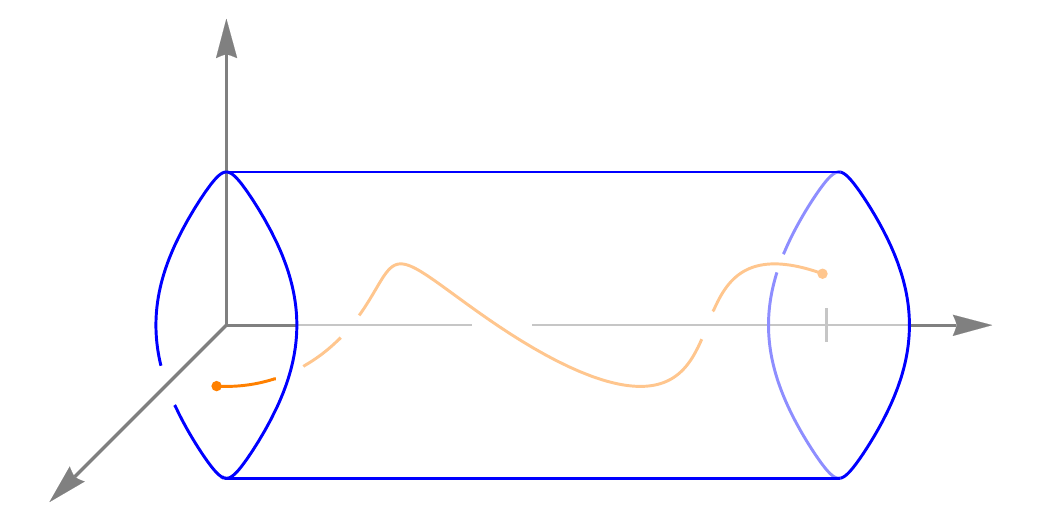}\put(-20,40){\footnotesize $\lambda\rightarrow$}\put(-64,40){\footnotesize $1$}\put(-180,80){\small $\Sigma$}\put(-140,-20){\small (a)}
		\hspace{1.6cm}	\includegraphics[scale=1]{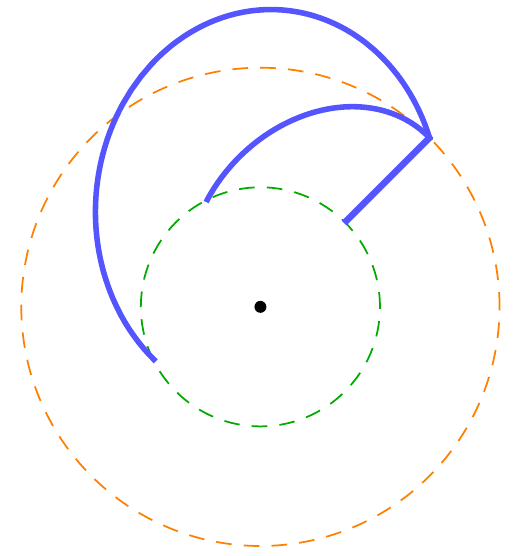}\put(-80,-20){\small (b)}\put(-15,120){\footnotesize $B$}\put(-63,86){\footnotesize $A_0$}\put(-97,91){\footnotesize $A_\lambda$}\put(-102,53){\footnotesize $A$}\put(-97,91){\footnotesize $A_\lambda$}
	\end{center}
\caption{(a): The projection of $\Sigma$ on the $\lambda$-variable is the full interval $[0,1]$. (b): Rotating the starting point on a circumference centered at the origin we obtain a homotopy from a rectilinear problem to the original one.}
\end{figure}

\begin{proof}[Proof of Theorem \ref{th1}(b)]
Write $A=r_A(\cos\theta_A,\sin\theta_A)$ and $B=r_B(\cos\theta_B,\sin\theta_B)=r_B w_0$, where $r_A,r_B>0$, $|\theta_B-\theta_A|<2\pi$, and $w_0:=(\cos\theta_B,\sin\theta_B)$. We consider the map $\Psi$ sending each final speed $v_0\in\mathbb R^2$ into the polar coordinates $(r(-T),\theta(-T))$ of the position $x(-T)=\mathcal X(B,v_0)$, of the solution $x$ of \eqref{eu1} satisfying $x(0)=B$, $\dot x(0)=v_0$. The lifting of the angle $\theta=\theta(t)$ is chosen so that $\theta(0)=\theta_B$.  In view of Proposition \ref{prop11} and Lemma \ref{lem41}{\em (i)}, the map $\Psi$ is naturally defined on the open set
$$\mathcal U:=\mathbb R^2\backslash([\beta,+\infty[w_0),$$ for some $\beta\in\mathbb R$. We also set 
\begin{equation*}
\Phi:[0,1]\times\mathcal U\to\mathbb R^2,\qquad\Phi(\lambda,v_0):=\Psi(v_0)-\big(
r_A,\vartheta(\lambda)\big),
\end{equation*}
where $\vartheta(\lambda):=(1-\lambda)\theta_B+\lambda\theta_A$. Denoting $A_\lambda:=r_A(\cos\vartheta(\lambda),\sin\vartheta(\lambda))$ we see that, for $\lambda=0$, $A_0=(r_A/r_B)B$ lies on the same ray as $B$, and thus $\Phi(0,v_0)=0$ if and only if $v_0$ is the final velocity of a rectilinear arc from $A_0$ to $B$ in the flight time $T$. In the case $\lambda=1$ it is clear that $\Phi(1,v_0)=0$ if and only if $v_0$ is the final velocity of an arc from $A_1=A$ to $B$ in the flight time $T$. With an additional remark: the direction of rotation must be set according to the sign of $\theta_B-\theta_A$, clockwise for $\theta_B-\theta_A<0$ and counterclockwise for $\theta_B-\theta_A>0$. Finally, for $0<\lambda<1$ one checks that $\Phi(\lambda,v_0)=0$ if and only if $v_0$ is the final velocity of an arc from $A_\lambda$ to $B$ in the flight time $T$ (the previous comments on the direction of the rotation still apply). See Fig. 1(b).

\medbreak

Theorem \ref{th1}{\em (b)} follows by applying Lemma \ref{lem51} to this function.  In order to check assumption {\em (a)} we observe that if $v_0\in\mathcal U$ satisfies that $\Phi(0,v_0)=0$ then $v_0=\mu w_0$ for some $\mu\in\mathbb R$ and our solution is rectilinear. Corollary \ref{cor42} then implies that there exists a unique solution of this type, which is nondegenerate by Lemma \ref{lem43}.  Assumption {\em (b)} is ensured by Proposition \ref{prop2}, while {\em (c)} is actually a consequence of  Corollary \ref{lem32}. The proof is complete.
\end{proof}

\section{Nonrectilinear solutions are defined for all (past) time}\label{sec5}
In this section we assume {\bf [D$_{1}$]}. Let us go to polar coordinates and replace the cartesian dependent variables $x\in\mathbb R^2\backslash\{0\}$ by $r:=|x|>0$ and $\theta:=\arg(x)\in\mathbb R/2\pi\mathbb Z$. The new equations are simpler if one introduces the angular momentum $c:=\det(x,\dot x)=r^2\dot\theta$, which is a function of time. System \eqref{eu1} yields
\begin{equation*}
\ddot r+D(x)\dot r+\frac{1}{r^2}-\frac{c^2}{r^3}=0,\qquad\dot c+D(x)c=0,
\end{equation*}
or, equivalently (assuming that $x=x(t)$ is defined on the time interval $]\alpha,0]$ for some $\alpha<0$),
\begin{equation}\label{eq101}
\frac{d}{dt}(p\dot r)=p\left(\frac{c^2}{r^3}-\frac{1}{r^2}\right),\qquad c(t)=\frac{c(0)}{p(t)},
\end{equation}
the function $p=p(t)>0$ being given by \eqref{pp}.
We shall also introduce the {\em potential} and the {\em total} energies 
\begin{equation}\label{euu}
v:=-\frac{1}{r}+\frac{c^2}{2r^2},\qquad h:=\frac{\dot r^2}{2}+v,	
\end{equation}
which are again functions of $t$. A direct computation shows that  $\dot h=-D(x)(\dot r^2+c^2/r^2)\leq 0$, and thus, $h$ is decreasing. Through Sections \ref{sec5}-\ref{sec3}, {\em we shall think of the six functions $r,\theta,c,p,v,h$ as associated to the solution $x$.} 

\medbreak

\begin{proof}[Proof of Proposition \ref{prop11}] We follow along the lines of \cite[Proposition 2.1]{MarOrtReb}, which deals with the special case $D\equiv{\rm const}$. See also \cite[Proposition 2.1]{MarOrtReb2}, where solutions are shown to be globally-defined in the future under the assumption that $D=D(|x|)$ depends only on the height of the particle.

		\medbreak
	
		We shall argue by contradiction and assume that $\alpha>-\infty$. Lemma \ref{lem31} then states that $\liminf_{t\downarrow\alpha}r(t)=0$. In view of \eqref{eq101} one has the inequality
			\begin{equation}\label{eu11}
			\frac{d}{dt}\Big(p(t)\dot r(t)\Big)\geq\\ p(t)\left(\frac{c(0)^2}{r(t)^3}-\frac{1}{r(t)^2}\right),\qquad t\in]\alpha,0].		
			\end{equation}
		The function $x\mapsto c(0)^2/x^3-1/x^2$ is bounded from below on $]0,+\infty[$. Together with \eqref{sti} we see that there exists some constant $M>1$ such that
			\begin{equation}\label{eu9}
			\frac{d}{dt}\Big(p(t)\dot r(t)\Big)\geq -M,\qquad t\in]\alpha,0].
			\end{equation}
			
			On the other hand, $r$ is positive and $\liminf_{t\downarrow\alpha}r(t)=0$, and we deduce that 
			\begin{equation}\label{eq11}
			\limsup_{t\downarrow\alpha}\big(p(t)\dot r(t)\big)\geq 0.
			\end{equation}
			Therefore, \eqref{eu9} it implies  that
			$$p(t)\dot r(t)\geq -M(t-\alpha),\qquad t\in]\alpha,0],$$
			and consequently (by \eqref{sti}),
			$$\dot r(t)\geq -\frac{M(t-\alpha)}{p(t)}\geq M\alpha e^{-D_*\alpha},\qquad t\in]\alpha,0].$$
			We have shown that $\dot r$ is bounded from below on $]\alpha,0]$. It implies that the limit $\lim_{t\downarrow\alpha}r(t)$ exists, and since  $\liminf_{t\downarrow\alpha}r(t)=0$ it follows that $\lim_{t\downarrow\alpha}r(t)=0$. Therefore, \eqref{eu11} gives
			$$\lim_{t\downarrow\alpha}\frac{d}{dt}\big(p(t)\dot r(t)\big)=+\infty,$$
			which, in combination with \eqref{eq11} implies that $\dot r(t)>0$ for $t$ sufficiently close to $\alpha$. Let now $\hat v,\hat h:]\alpha,0]\to\mathbb R$ be defined by
			$$\hat v(t):=\frac{c(0)^2}{2r(t)^2}-\frac{1}{r(t)},\qquad\hat h(t):=\frac{1}{2}\dot r(t)^2+\hat v(t),$$ and observe that
			\begin{equation*}
			\lim_{t\downarrow\alpha}\hat v(t)=\lim_{t\downarrow\alpha}\hat h(t)= +\infty.
			\end{equation*}
			In particular, $\hat v(t)>0$ for $t$ close to $\alpha$, and one has
			$$\frac{d\hat h}{dt}=\dot r(t)\left[-D(x(t))\dot r(t)+\frac{c(t)^2-c(1)^2}{r(t)^3}\right]\geq -D(x(t))\dot r(t)^2\geq -2D_*\hat h(t),$$
			which, in combination with Gronwall's Lemma contradicts the previous observation that $\hat h(t)\to+\infty$ as $t\downarrow\alpha$. This contradiction concludes the proof.                    
		\end{proof}

\section{Bounds for the arrival velocity when connecting two given heights in a given time}\label{sec3}
The goal of this section is to prove Proposition \ref{prop2}, which  ensures the boundedness of the set of arrival velocities of solutions travelling between two given heights in a given time lapse. Throughout this section we assume {\bf [D$_{1}$]}.  

\medbreak

As a preliminary remark we point out that in the case of rectilinear solutions this statement was already proved in Corollary \ref{cor42}. Therefore, it only remains to check Proposition \ref{prop2} in the nonrectilinear situation. Remembering Proposition \ref{prop11}, nonrectilinear solutions are globally defined in the past.

\medbreak

We may divide this task into smaller steps using polar coordinates $x=re^{i\theta}$. Since $$|\dot x(0)|=\sqrt{\dot r(0)^2+r(0)^2\dot\theta(0)^2}=\sqrt{\dot r(0)^2+c(0)^2/r_B^2}\,,$$ we see that the nonrectilinear case of Proposition \ref{prop2} can be equivalently reformulated as follows: 

\medskip

\noindent
{\bf Proposition \ref{prop2}$^*$.}\ {\em For any $r_A,r_B,T>0$ there exists some $M>0$ such that whenever $x=x(t)$ is a nonrectilinear solution of \eqref{eu1}-\eqref{eu12} then
	$${(i)}\quad\dot r(0)\geq-M,\qquad{(ii)}\quad\dot r(0)\leq M,\qquad{(iii)}\quad |c(0)|\leq M.$$}

The goal of the remaining of this section is to prove the three assertions of Proposition \ref{prop2}$^*$. As shown in Proposition \ref{prop11}, nonrectilinear solutions are globally defined on the past. We begin with a result that clearly implies Proposition \ref{prop2}$^*${\em (i)}. 
\begin{lemma}\label{lem8}
	{Let $r_B,T>0$ be given, and let $x_n=x_n(t)$, $t\in[-T,0]$, be a sequence of solutions of \eqref{eu1} with $$r_n(0)=r_B,\qquad  \dot r_n(0)\to-\infty.$$ Then, $\dot r_n(t)\to-\infty\ \text{as }n\to+\infty,\text{uniformly with respect to } t\in[-T,0]$.
	}
	\begin{proof}

	After possibly discarding a finite number of terms there is no loss of generality in assuming that $\dot r_n(0)<0$ for every $n\in\mathbb N$. We set
		\begin{equation*}
		\tau_n:=\min\big\{t\in[-T,0]:\dot r_n(s)\leq 0\ \forall s\in[t,0]\big\}
		\end{equation*}
		and consider the functions $p_n:[\tau_n,0]\to\mathbb R$ defined as in \eqref{pp} for $x=x_n$. Recalling \eqref{eq101}, for each $n\in\mathbb N$ one has 
		\begin{equation*}
		\frac{d}{dt}\big(p_n(t)\dot r_n\big)\geq-\frac{p_n(t)}{r_n^2}\geq-\frac{1}{r_B^2},\qquad\tau_n\leq t\leq0,
		\end{equation*}
		and integration gives
		$$\dot r_n(t)\leq p_n(t)\dot r_n(t)\leq \dot r_n(0)+\frac{T}{r_B^2},\qquad\tau_n\leq t\leq 0.$$
		In particular, for $n$ big enough $\dot r_n(\tau_n)<0$ and we deduce that $\tau_n=-T$. Thus, $\dot r_n(t)\to-\infty$, uniformly with respect to $t\in[-T,0]$. It concludes the proof.
	\end{proof}
\end{lemma}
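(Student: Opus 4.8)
The plan is to exploit the Sturm--Liouville rewriting \eqref{eq101} together with the elementary monotonicity remark that, as long as $\dot r_n\le 0$, the radius $r_n$ cannot drop below its value $r_B$ at $t=0$. Concretely, I would introduce the auxiliary functions $p_n:[-T,0]\to\,]0,1]$ defined as in \eqref{pp} for $x=x_n$, so that by \eqref{eq101} one has $\frac{d}{dt}(p_n\dot r_n)=p_n\big(c_n^2/r_n^3-1/r_n^2\big)\ge -p_n/r_n^2$ regardless of whether the solution is rectilinear; the point is then to isolate the largest subinterval of $[-T,0]$ ending at $0$ on which $\dot r_n$ keeps a constant (nonpositive) sign, and to integrate there.

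First I would set $\tau_n:=\min\{t\in[-T,0]:\dot r_n(s)\le 0\ \forall s\in[t,0]\}$, which is well defined for $n$ large since $\dot r_n(0)<0$ eventually. On $[\tau_n,0]$ the function $r_n$ is non-increasing, hence $r_n(s)\ge r_B$ there, so by \eqref{sti} (which uses only {\bf [D$_1$]}) we get $\frac{d}{dt}(p_n\dot r_n)\ge -1/r_B^2$ on $[\tau_n,0]$. Integrating from $t$ to $0$, using $p_n(0)=1$, $0<p_n\le 1$, and $\dot r_n\le 0$ on this interval, yields
$$\dot r_n(t)\le p_n(t)\dot r_n(t)\le\dot r_n(0)+\frac{T}{r_B^2},\qquad t\in[\tau_n,0].$$
Then I would argue that $\tau_n=-T$ for $n$ large: otherwise, by continuity and the minimality defining $\tau_n$ one would have $\dot r_n(\tau_n)=0$, contradicting the displayed inequality evaluated at $t=\tau_n$ as soon as $n$ is so large that $\dot r_n(0)+T/r_B^2<0$. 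Hence the bound holds on all of $[-T,0]$, and since its right-hand side tends to $-\infty$ independently of $t$, the conclusion follows.

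The only genuinely delicate point is the step $\tau_n=-T$: one must rule out that $\dot r_n$ vanishes somewhere in $\,]-T,0[\,$, which would destroy the monotonicity estimate $r_n\ge r_B$ underlying the whole argument. This is exactly where the hypothesis $\dot r_n(0)\to-\infty$ enters essentially (not just cosmetically): it makes the already-established bound $\dot r_n(\tau_n)\le\dot r_n(0)+T/r_B^2$ strictly negative for large $n$, which is incompatible with $\dot r_n(\tau_n)=0$. Everything else is a one-line integration, and no use of {\bf [D$_2$]} is needed.
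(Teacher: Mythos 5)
Your proposal is correct and follows essentially the same route as the paper's own proof: the same definition of $\tau_n$, the same Sturm--Liouville estimate $\frac{d}{dt}(p_n\dot r_n)\ge -1/r_B^2$ on $[\tau_n,0]$ using $r_n\ge r_B$ there, the same integration yielding $\dot r_n(t)\le p_n(t)\dot r_n(t)\le\dot r_n(0)+T/r_B^2$, and the same argument that $\dot r_n(\tau_n)<0$ for large $n$ forces $\tau_n=-T$. Nothing to add.
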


Our next task will consist in establishing the upper bounds announced in assertion {\em (ii)} of Proposition \ref{prop2}$^*$. This fact will arise from Lemma \ref{lem8} and statement {\em (iii)} of the following 
\begin{lemma}\label{lem9}
	{Let $x_n=x_n(t)$ be a sequence of nonrectilinear solutions of \eqref{eu1} satisfying, for some $r_*>0$,
$$r_n(0)=r_*,\qquad\dot r_n(0)\to+\infty.$$
Then, for $n$ big enough the following hold:
\begin{enumerate}
			\item[(i)] There exists some $\mathfrak t_n<0$ such that $\dot r_n(\mathfrak t_n)=0$. Moreover, $\lim_{n\to+\infty}\mathfrak t_n=0$. 
		\item[(ii)] $\dot r_n(t)>0\text{ for all }t\in]\mathfrak t_n,0[$, and $\dot r_n(t)<0\text{ for all }t<\mathfrak t_n\,.$
				
\item[(iii)] There exists some $\tau_n<\mathfrak t_n$ such that $r_n(\tau_n)=r_*$. Moreover, $\lim_{n\to+\infty}\tau_n=0$ and $\lim_{n\to+\infty}\dot r_n(\tau_n)=-\infty$.	
\end{enumerate}
}\end{lemma}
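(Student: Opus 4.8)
The plan is to analyze the behavior of the sequence $\{x_n\}$ by tracking the radial velocity $\dot r_n$ and using the monotone-behavior properties of solutions of \eqref{1d+}-type equations together with the energy estimates already developed. For item \emph{(i)}-\emph{(ii)}, I first observe that since $\dot r_n(0)\to+\infty$, the radial function is increasing at $t=0$ for $n$ large; going backward in time, I want to locate the last instant before $0$ at which $\dot r_n$ vanishes. Using the rewriting \eqref{eq101}, $\frac{d}{dt}(p_n\dot r_n)=p_n(c_n^2/r_n^3-1/r_n^2)$, and noting that on any interval where $r_n$ stays bounded away from $0$ the right-hand side is bounded, one gets that $p_n(t)\dot r_n(t)$ cannot drop too fast; combined with the fact that $r_n$ is bounded (since $r_n(0)=r_*$ and, on an interval where $\dot r_n>0$ moving backward, $r_n$ decreases backward so stays $\le r_*$), I expect that $\dot r_n$ must hit zero at some $\mathfrak t_n<0$. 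To see $\mathfrak t_n\to0$: if $\mathfrak t_n$ stayed bounded away from $0$ along a subsequence, then on $[\mathfrak t_n,0]$ we would have $\dot r_n>0$ throughout, and integrating $\frac{d}{dt}(p_n\dot r_n)\le$ (bounded constant, since $r_n\ge r_n(\mathfrak t_n)$ and we can bound $c_n^2/r_n^3-1/r_n^2$ from above — here a lower bound on $r_n(\mathfrak t_n)$ is needed, which follows because $r_n\le r_*$ and $\dot r_n$ is huge at $0$ forces, via energy, $r_n(\mathfrak t_n)$ not too small) would contradict $\dot r_n(0)\to+\infty$. The sign statement \emph{(ii)} then follows from Lemma \ref{lem400}-style reasoning: solutions of the radial equation have no interior local minimum (the equation forces $\ddot r<0$ wherever $\dot r=0$ and $c^2/r^3<1/r^2$, or more carefully one uses that $\frac{d}{dt}(p_n\dot r_n)$ has a definite sign structure), so once $\dot r_n$ vanishes at $\mathfrak t_n$ it stays negative for $t<\mathfrak t_n$ and positive on $]\mathfrak t_n,0[$.

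For item \emph{(iii)}, the key point is that at $\mathfrak t_n$ the solution has $\dot r_n(\mathfrak t_n)=0$ and $r_n(\mathfrak t_n)<r_*$ (since $r_n$ strictly decreased going backward from $0$ to $\mathfrak t_n$). Now restart the clock at $\mathfrak t_n$: for $t<\mathfrak t_n$ we have $\dot r_n<0$, so $r_n$ is increasing backward, hence by the intermediate value theorem there is a first (largest) $\tau_n<\mathfrak t_n$ with $r_n(\tau_n)=r_*$. That $\tau_n\to0$ follows since $\mathfrak t_n\to0$ and $\tau_n\in[-T,\mathfrak t_n]$ combined with an estimate that $r_n$ cannot travel from $r_n(\mathfrak t_n)$ back up to $r_*$ too slowly — more precisely, one shows $\dot r_n(\tau_n)\to-\infty$, which forces the transit time $\mathfrak t_n-\tau_n\to0$, hence $\tau_n\to0$. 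The divergence $\dot r_n(\tau_n)\to-\infty$ is where the energy comes in: the total energy $h_n$ is decreasing, and $h_n(0)=\frac12\dot r_n(0)^2+v_n(0)\to+\infty$ because $\dot r_n(0)\to+\infty$ while $v_n(0)=-1/r_*+c_n(0)^2/(2r_*^2)$ — but wait, here I must be careful that $c_n(0)$ could also blow up. Let me instead argue directly on the radial equation between $\tau_n$ and $\mathfrak t_n$: integrating $\frac{d}{dt}(p_n\dot r_n)=p_n(c_n^2/r_n^3-1/r_n^2)$ from $\tau_n$ to $\mathfrak t_n$ with $\dot r_n(\mathfrak t_n)=0$, and using that on this interval $r_n$ ranges in $[r_n(\mathfrak t_n),r_*]$, I can bound $p_n(\tau_n)\dot r_n(\tau_n)$ from above by a negative quantity; but to get $\to-\infty$ I need $r_n(\mathfrak t_n)\to0$. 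This last fact should follow by contradiction with item \emph{(i)}'s proof: if $r_n(\mathfrak t_n)$ were bounded below, then on $[\mathfrak t_n,0]$ the quantity $\frac{d}{dt}(p_n\dot r_n)$ is bounded, so $\dot r_n(0)$ would be bounded, contradicting $\dot r_n(0)\to+\infty$. Hence $r_n(\mathfrak t_n)\to0$, and then the integrated radial equation on $[\tau_n,\mathfrak t_n]$ (where the $-1/r_n^2$ term near $r_n(\mathfrak t_n)$ dominates and has the right sign to push $\dot r_n(\tau_n)$ very negative) gives $\dot r_n(\tau_n)\to-\infty$; the transit-time estimate then gives $\tau_n\to0$.

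I expect the main obstacle to be the bookkeeping around $r_n(\mathfrak t_n)$: establishing \emph{both} that $r_n(\mathfrak t_n)<r_*$ (the easy direction, from monotonicity of $r_n$ backward from $0$) \emph{and} that $r_n(\mathfrak t_n)\to0$ (needed for \emph{(iii)}), while simultaneously keeping control of the sign of $c_n^2/r_n^3-1/r_n^2$ — which changes sign at $r_n=c_n^2$ — without a priori knowing whether $c_n(0)$ is bounded. The cleanest route is probably: first prove \emph{(i)}-\emph{(ii)} using only that $r_n\le r_*$ on $[\mathfrak t_n,0]$ and the lower bound $\frac{d}{dt}(p_n\dot r_n)\ge -p_n/r_n^2\ge -1/r_n(\mathfrak t_n)^2$ is \emph{not} uniformly useful, so instead use the \emph{upper} bound via $c_n^2/r_n^3\le c_n^2/r_n(\mathfrak t_n)^3$ only after one has a lower bound on $r_n(\mathfrak t_n)$ — which is circular — so the right move is to argue by contradiction globally: assume $r_n(\mathfrak t_n)\ge\delta>0$ along a subsequence, derive that $\dot r_n(0)$ is then bounded on $[\mathfrak t_n,0]$ (using $r_n\in[\delta,r_*]$ there so the RHS of \eqref{eq101} is bounded once $c_n$ is controlled — and $c_n=c_n(0)/p_n$ with $p_n\in[e^{-D_*T},1]$, and $c_n(0)^2=r_*^2\dot\theta_n(0)^2r_*^2$... here nonrectilinearity only says $c_n(0)\ne0$, not that it's bounded). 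So I may actually need to treat the possibility $|c_n(0)|\to\infty$ separately, or invoke that this lemma will be applied in a context where Proposition \ref{prop2}$^*$\emph{(iii)} is being bootstrapped — but since the lemma as stated has no hypothesis on $c_n(0)$, the honest proof must handle large angular momentum, presumably by noting that large $c_n$ makes the centrifugal term $c_n^2/r_n^3$ dominate and \emph{prevents} collision, so $r_n$ stays large, which is actually compatible with and even forces the desired conclusions after adjusting signs. Sorting out this dichotomy on $c_n(0)$ cleanly is the crux.
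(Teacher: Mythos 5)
Your proposal has the right overall shape (locate the backward critical time $\mathfrak t_n$, establish the sign structure of $\dot r_n$ around it, then compare the inward and outward branches), but it leaves the central difficulty unresolved and contains two concrete errors. The difficulty you yourself call ``the crux'' --- that $c_n(0)$ may be unbounded, so that neither $c_n^2/r_n^3-1/r_n^2$ nor $r_n(\mathfrak t_n)$ can be controlled a priori --- is precisely what the paper's proof is organized around, and your text ends by observing the circularity rather than breaking it. The paper breaks it by working with the potential energy $v_n=-1/r_n+c_n^2/(2r_n^2)$, which absorbs the angular momentum, together with Lemma \ref{lem4} ($v$ is nonincreasing on an interval where $\dot r\geq 0$ and $v\geq 0$ at the right endpoint). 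On a putative interval $[-\epsilon,0]$ where $\dot r_n>0$ this yields a trichotomy: if $v_n$ is large at some time it stays large going backward, and then $\frac{d}{dt}(p_n\dot r_n)\geq p_nv_n/r_n\to+\infty$ forces $r_n(0)\to+\infty$; if instead $v_n$ stays bounded, then $\tfrac12\dot r_n^2=h_n-v_n\geq\tfrac12\dot r_n(0)^2-v_n\to+\infty$ forces the same; the mixed case is handled by an intermediate time $s_n$. Either way $r_n(0)=r_*$ is contradicted. Without an argument of this kind, your step ``the right-hand side is bounded'' has no justification, and the lemma has no hypothesis bounding $c_n(0)$ to fall back on.

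Two further points. First, your justification of \emph{(ii)} is backwards: you invoke ``solutions of the radial equation have no interior local minimum,'' but $\mathfrak t_n$ \emph{is} an interior local minimum of $r_n$ (that is the whole point: $\dot r_n<0$ before it and $\dot r_n>0$ after it). The no-local-minimum property holds only in the rectilinear case $c=0$; here one needs $\ddot r_n(\mathfrak t_n)>0$, which the paper derives from $v_n(\mathfrak t_n)=h_n(\mathfrak t_n)\geq h_n(0)\geq\tfrac12\dot r_n(0)^2-1/r_*\to+\infty$ (the total energy is decreasing), and a second critical point below $\mathfrak t_n$ is then excluded by the same potential-energy monotonicity. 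Second, your route to \emph{(iii)} hinges on $r_n(\mathfrak t_n)\to 0$, which is neither proved by your sketch nor true in general: already for $D\equiv 0$, choosing $c_n^2=\mu r_*^2\dot r_n(0)^2$ with $\mu>0$ fixed gives a perihelion distance tending to $r_*\sqrt{\mu/(1+\mu)}>0$. The paper instead parametrizes the descending branch ($\tau_n<t<\mathfrak t_n$) and the ascending branch ($\mathfrak t_n<t<0$) by $r$, writing $\dot r_n=-\varphi_n(r_n)$ and $\dot r_n=\psi_n(r_n)$ respectively; the decrease of $h_n$ gives $0<\psi_n\leq\varphi_n$, whence $\mathfrak t_n-\tau_n\leq-\mathfrak t_n$ (so $\tau_n\geq 2\mathfrak t_n\to 0$) and $-\dot r_n(\tau_n)=\varphi_n(r_*)\geq\psi_n(r_*)=\dot r_n(0)\to+\infty$. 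Note also that your proposed implication ``$|\dot r_n(\tau_n)|$ large forces the transit time $\mathfrak t_n-\tau_n$ to be short'' is not valid on its own.
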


In the proof of Lemma \ref{lem9} we shall need the following result. Here, the potential and total energies $v=v(t)$, $h=h(t)$ are defined as in \eqref{euu}.
\begin{lemma}\label{lem4}
	{Let $x:[a,b]\to\mathbb R²\backslash\{0\}$ be a nonrectilinear solution of \eqref{eu1} satisfying
		$$\dot r(t)\geq 0\ \forall t\in[a,b],\qquad v(b)\geq 0\,.$$
		Then $\dot v\leq 0$ on $[a,b]$.
	}
	\begin{proof}A direct computation gives
		\begin{multline*}
		\dot v(t)=\dot r(t)\left(\frac{1}{r(t)^2}-\frac{c(t)^2}{r(t)^3}\right)+\frac{c(t)\dot c(t)}{r(t)^2}\leq\dot r(t)\left(\frac{1}{r(t)^2}-\frac{c(t)^2}{r(t)^3}\right)\leq\frac{\dot r(t)}{r(t)}\left(\frac{1}{r(t)}-\frac{c(b)^2}{r(t)^2}\right)\leq\\
		\leq-\frac{\dot r(t)}{r(t)}\left(\frac{c(b)^2}{2r(t)^2}-\frac{1}{r(t)}\right)
		\end{multline*}
		The function $x>0\mapsto c(b)^2/(2x^2)-1/x$ vanishes only at $x=c(b)^{-2}/2$, is positive for smaller $x$ and negative for bigger $x$. By assumption it is nonnegative at $x=r(b)$ and hence 
		$$0<r(t)\leq r(b)\leq\frac{ 1}{2c(b)^2},\qquad \frac{c(b)^2}{2´r(t)^2}-\frac{1}{r(t)}\geq 0,\qquad \text{ for any } t\in]a,b[,$$   	
		thus concluding  the proof.
	\end{proof}
\end{lemma}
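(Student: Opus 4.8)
The plan is to compute $\dot v$ directly and bound it by a nonpositive quantity using the two hypotheses $\dot r\geq 0$ and $v(b)\geq 0$. First I would differentiate $v=-1/r+c^2/(2r^2)$ along the solution, getting $\dot v=\dot r(r^{-2}-c^2 r^{-3})+c\dot c\,r^{-2}$. Since $\dot c=-D(x)c$ and $D\geq 0$, the last term $c\dot c/r^2=-D(x)c^2/r^2$ is $\leq 0$, so it can be dropped. This leaves the bound $\dot v\leq \dot r(r^{-2}-c^2 r^{-3})=(\dot r/r)(r^{-1}-c^2 r^{-2})$.

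Next I would use that $t\mapsto c(t)$ can only grow in magnitude as $t$ decreases on $[a,b]$ — more precisely, from $c(t)=c(0)/p(t)$ (or directly from $\dot c=-Dc$ with $D\geq 0$) one has $|c(t)|\geq |c(b)|$ for $t\leq b$, hence $c(t)^2\geq c(b)^2$ throughout $[a,b]$. Combined with $\dot r\geq 0$ and $r>0$, this gives $\dot v\leq (\dot r/r)(r^{-1}-c(b)^2 r^{-2})=-(\dot r/r)\big(c(b)^2/(2r^2)-1/r\big)-(\dot r/r)\big(c(b)^2/(2r^2)\big)$; it is cleaner simply to keep $\dot v\leq (\dot r/r)(r^{-1}-c(b)^2 r^{-2})\leq -(\dot r/r)\big(c(b)^2/(2r^2)-1/r\big)$, where the last inequality uses $r^{-1}-c(b)^2 r^{-2}\leq -(c(b)^2/(2r^2)-1/r)$, equivalently $2/r\leq c(b)^2/r^2 + \text{nothing}$... wait, I should just follow the chain already displayed: $(1/r-c(b)^2/r^2)\leq -(c(b)^2/(2r^2)-1/r)$ iff $1/r - c(b)^2/r^2 \leq -c(b)^2/(2r^2)+1/r$ iff $-c(b)^2/r^2\leq -c(b)^2/(2r^2)$, which holds. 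So $\dot v\leq -(\dot r/r)\big(c(b)^2/(2r^2)-1/r\big)$.

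It remains to show the bracket $c(b)^2/(2r(t)^2)-1/r(t)$ is $\geq 0$ on $[a,b]$. The function $g(s):=c(b)^2/(2s^2)-1/s$ for $s>0$ vanishes exactly at $s=c(b)^2/2$, is positive for $s<c(b)^2/2$ and negative for $s>c(b)^2/2$. The hypothesis $v(b)\geq 0$ reads $c(b)^2/(2r(b)^2)-1/r(b)\geq 0$, i.e. $g(r(b))\geq 0$, which forces $r(b)\leq c(b)^2/2$. Since $\dot r\geq 0$ on $[a,b]$, we have $0<r(t)\leq r(b)\leq c(b)^2/2$ for every $t\in[a,b]$, hence $g(r(t))\geq 0$ there. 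Therefore $\dot v(t)\leq -(\dot r(t)/r(t))\,g(r(t))\leq 0$ on $[a,b]$, as claimed.

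I do not anticipate a genuine obstacle here: the lemma is a one-line computation followed by elementary monotonicity of a single-variable function. The only point requiring a moment's care is the inequality $c(t)^2\geq c(b)^2$ for $t\leq b$, which is needed to replace $c(t)$ by $c(b)$ in the estimate; this is immediate from the angular-momentum equation $\dot c=-D(x)c$ together with $D\geq 0$, already recorded in the discussion preceding the statement. Everything else is bookkeeping.
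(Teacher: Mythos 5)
Your proof is correct and follows essentially the same chain of estimates as the paper: drop the nonpositive term $c\dot c/r^2$, use $|c(t)|\geq|c(b)|$ from $\dot c=-Dc$, rewrite the bound via the auxiliary function $c(b)^2/(2s^2)-1/s$, and use $v(b)\geq0$ together with $\dot r\geq0$ to conclude that this function is nonnegative along the trajectory. You even correctly locate its zero at $s=c(b)^2/2$, where the paper's text contains a typo ($c(b)^{-2}/2$, later written as $1/(2c(b)^2)$).
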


\begin{proof}[Proof of Lemma \ref{lem9}] 
{\em (i)}: We use a contradiction argument: should this statement fail to hold, after possibly passing to a subsequence it would be possible to find some $\epsilon>0$ such that $\dot r_n(t)>0$ for every $t\in[-\epsilon,0]$. Recalling Lemma \ref{lem4} we distinguish three possibilities: 
\begin{enumerate}
	\item[{\em (a)}] $v_n(0)\to+\infty$. By Lemma \ref{lem4}, $v_n(t)\to+\infty$ as $n\to+\infty$, uniformly with respect to $t\in[-\epsilon,0]$, and in particular, $v_n(t)>0$ for all $t\in[-\epsilon,0]$ and $n$ big enough. We define $p_n:[-\epsilon,0]\to\mathbb R$ as in \eqref{pp} for $x=x_n$ and observe that
	\begin{equation}\label{eu31}
	\frac{d}{dt}\Big(p_n(t)\dot r_n(t)\Big)=p_n(t)\Big[\frac{c_n(t)^2}{r_n(t)^3}-\frac{1}{r_n(t)^2}\Big]\geq\frac{p_n(t)}{r_n(t)}v_n(t)\geq\frac{e^{-D_*\epsilon}}{{r_*}}v_n(t)\to+\infty\text{ as }n\to+\infty,
	\end{equation}
	uniformly with respect to $t\in[-\epsilon,0]$. Since $p_n(-\epsilon)\dot r_n(-\epsilon)>0$ we see that $p_n(t)\dot r_n(t)\to+\infty$ as $n\to+\infty$, uniformly with respect to $t\in[-\epsilon/2,0]$, contradicting the fact that  $r_*=r_n(0)$ for every $n$.
		\item[{\em (b)}] $\{v_n(-\epsilon)\}$ bounded from above. By Lemma \ref{lem4}, the sequence $\{v_n(t)\}$ is bounded uniformly with respect to $t\in[-\epsilon,0]$. On the other hand, for each $n\in\mathbb N$ we have
		$$\frac{1}{2}\dot r_n(t)^2=h_n(t)-v_n(t)\geq h_n(0)-v_n(t)\geq\frac{1}{2}\dot r_n(0)^2-v_n(t)\,,\qquad t\in[-\epsilon,0],$$
		and we deduce that $\dot r_n(t)\to+\infty$ as $n\to+\infty$, uniformly with respect to $t\in[-\epsilon,0]$. As before, it implies that $r_*=r_n(0)\to+\infty$ as $n\to+\infty$, a contradiction.
		\item[{\em (c)}] $v_n(-\epsilon)\to+\infty$ but $\{v_n(0)\}$ is bounded from above. Then, for $n$ big enough $$v_n(0)<\frac{1}{2}\min\big(v_n(-\epsilon),h_n(0)\big)<v_n(-\epsilon),$$ and there exists some $s_n\in]-\epsilon,0[$ such that $v_n(s_n)=\min\big(v_n(-\epsilon),h_n(0)\big)/2$. Thus, $v_n(s_n)\to+\infty$ as $n\to+\infty$ and $v_n(s_n)>0$ for $n$ big enough. Lemma \ref{lem4} implies that 
		$$v_n(t)\geq v_n(s_n)\text{ if }-\epsilon\leq t\leq s_n;\qquad v_n(t)\leq v_n(s_n)\text{ if }s_n\leq t\leq 0.$$ 
		
		Now, arguing as in case {\em (a)},
		for $t\in[-\epsilon,s_n]$ we have
		$$\frac{d}{dt}\Big(p_n(t)\dot r_n(t)\Big)\geq\frac{p_n(t)}{r_n(t)}v_n(t)\geq\frac{e^{-D_*\epsilon}}{{r_*}}v_n(s_n)\to+\infty\text{ as }n\to+\infty,$$
\end{enumerate}
from where it follows that $s_n\to-\epsilon$ as $n\to+\infty$. On the other hand, repeating the argument of case {\em (b)}, for $t\in[s_n,0]$ one has
	$$\frac{1}{2}\dot r_n(t)^2=h_n(t)-v_n(t)\geq h_n(0)-v_n(s_n)\geq\frac{1}{2}h_n(0)\to+\infty\text{ as }n\to+\infty,$$
	and we deduce that $s_n\to 0$, a contradiction again. It proves {\em (i)}.

\medbreak

{\em (ii)}: Let the sequence $\{\mathfrak t_n\}$ be as given by {\em (i).} After possibly replacing $\mathfrak t_n$ with $$\mathfrak t_n^*:=\max\{t<0:\dot r_n(t)=0\},$$ there is no loss if generality in assuming that $\dot r_n(t)>0$ for all $t\in]t_n,0[$. Since each $h_n=h_n(t)$ is a decreasing function of $t$, we see that
$$v_n(\mathfrak t_n)=h_n(\mathfrak t_n)\geq h_n(0)\geq\frac{1}{2}\dot r_n(0)^2-\frac{1}{r_*}\to+\infty\qquad\text{ as }n\to+\infty.$$
Thus, $v_n(\mathfrak t_n)>0$ for $n$ big enough, and we see that
$$\ddot r_n(\mathfrak t_n)=-D(x_n(\mathfrak t_n))\dot r_n(\mathfrak t_n)+\frac{c_n(\mathfrak t_n)^2}{r_n(\mathfrak t_n)^3}-\frac{1}{r_n(\mathfrak t_n)^2}\geq\frac{v_n(\mathfrak t_n)}{r_n(\mathfrak t_n)}>0,$$
and we deduce that $\dot r_n(t)<0$ for $0<\mathfrak t_n-t$ small. If there were some $\mathfrak s_n<\mathfrak t_n$ such that
\begin{equation*}
\dot r_n(\mathfrak s_n)=0,\qquad \dot r_n(t)<0\text{ for every }t\in]\mathfrak s_n,\mathfrak t_n[,
\end{equation*}
then Lemma \ref{lem4} would imply that $v_n(\mathfrak s_n)\geq v_n(\mathfrak t_n)>0$, and arguing as above, $\ddot r_n(\mathfrak s_n)>0$. This is a contradiction and concludes the proof.

\medbreak

{\em (iii)} Notice that $r_n(\mathfrak t_n)<r_*$. We set
$$\tau_n:=\inf\{t\in]-\infty,\mathfrak t_n[:r_n(t)<r_*\}\,,\qquad\mathfrak r_n:=\lim_{t\downarrow\tau_n}r_n(t)\leq r_*\,,$$
and write $$\dot r_n(t):=\begin{cases}
-\varphi_n(r_n(t))&\text{if }\tau_n<t<\mathfrak t_n\,,\\
\psi_n(r_n(t))&\text{if }\mathfrak t_n<t<0\,.
\end{cases}$$ The functions $\varphi_n:]r_n(\mathfrak t_n),\mathfrak r_n[\to\mathbb R$ and $\psi_n:]r_n(\mathfrak t_n),r_*[\to\mathbb R$ are continuous, and since all energy functions $h_n=h_n(t)$ are decreasing we see that $0<\psi_n(r)\leq\varphi_n(r)$ for every $r\in]r_n(\mathfrak t_n),\mathfrak r_n[$. Thus,
$$\mathfrak t_n-\tau_n=\int_{\tau_n}^{\mathfrak t_n}dt=-\int_{\tau_n}^{\mathfrak t_n}\frac{\dot r_n(t)}{\varphi_n(r_n(t))}dt=\int_{r_n(\mathfrak t_n)}^{\mathfrak r_n}\frac{1}{\varphi_n(r)}dr\leq\int_{r_n(\mathfrak t_n)}^{r_*}\frac{1}{\psi_n(r)}dr=-\mathfrak t_n,$$
i.e., $2\mathfrak t_n\leq\tau_n<0$. In particular, $\tau_n>-\infty$ and we see that $\mathfrak r_n=r_n(\mathfrak t_n)=r_*$. The result follows.
\end{proof}

We conclude this section by checking assertion {\em (iii)} of Proposition  \ref{prop2}$^*$. Having already shown the previous statements {\em (i)-(ii)}, the remaining work is collected in the following

\begin{lemma}\label{lem44}{Let $T,r_A,r_B>0$ be given, and let $\{x_n\}$ be a sequence of solutions of \eqref{eu1}-\eqref{eu12} such that $\{\dot r_n(0)\}$ is bounded. Then $\{c_n(0)\}$ is bounded.}
		\begin{proof}We distinguish two cases:

			\medbreak
			
			Assume firstly that $\max_{-T/2\leq t\leq 0}r_n(t)\to+\infty$. Since $r_n(0)=r_B$ for every $n$, there exists a sequence $\{t_n\}_n\subset[-T/2,0]$ with $r_n(t_n)\geq 1$ for every $n$ and $\dot r_n(t_n)\to-\infty$. Thus, Lemma \ref{lem8} states that $\max_{t\in[-T,t_n]}\dot r_n(t)\to-\infty$, implying that $r_A=r_n(-T)\to+\infty$, a contradiction.
			
			\medbreak
			
			The other possibility is that, after possibly passing to a subsequence, $\{r_n\}$ is uniformly bounded on $[-T/2,0]$. Using a contradiction argument we assume that  $|c_n(0)|\to+\infty$. Then $v_n(0)\to+\infty$ and Lemma \ref{lem4} states that $v_n(t)\to+\infty$, uniformly with respect to $t\in[-T/2,0]$. Arguing as in \eqref{eu31} we see that $(d/dt)(p_n(t)\dot r_n(t))\to+\infty$ uniformly with respect to $t\in[-T/2,0]$. With other words,
			$$\gamma_n:=\min_{t\in[-T/2,0]}\left[\frac{d}{dt}\big(p_n(t)\dot r_n(t)\big)\right]\to+\infty\text{ as }n\to+\infty.$$
			On the other hand, integration gives 
			$$p_n(t)\dot r_n(t)\leq\dot r_n(0)+\gamma_n t\leq |\dot r_n(0)|+\gamma_n t\,,\qquad t\in\left[-\frac{T}{2},0\right],$$
			and thus (by \eqref{sti}), $\dot r_n(t)\leq|\dot r_n(0)|e^{D_*T/2}+\gamma_nt$ for every $t\in[-T/2,0]$. Integrating again we find that $r_n(-T/2)\to+\infty$, contradicting our assumption that $\{r_n\}$ was uniformly bounded on $[-T/2,0]$. It proves the result.
		\end{proof}	 
\end{lemma}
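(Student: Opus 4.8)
The proof will run by contradiction, exploiting the principle that an orbit with large angular momentum is flung violently away from the origin, which clashes with the prescribed endpoint heights $r_n(-T)=r_A$, $r_n(0)=r_B$. Assume then that $\{c_n(0)\}$ is unbounded; passing to a subsequence, $|c_n(0)|\to+\infty$. By \eqref{eq101} one has $c_n(t)=c_n(0)/p_n(t)$, and since $0<p_n\le1$ by \eqref{sti}, it follows that $|c_n(t)|\ge|c_n(0)|\to+\infty$, uniformly on $[-T,0]$.

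\emph{Step 1: the orbits stay bounded near the origin.} I would first show that, along a further subsequence, $\{r_n\}$ is uniformly bounded on $[-T/2,0]$ — a fact that does not use the angular momentum at all. Suppose instead that $M_n:=\max_{[-T/2,0]}r_n\to+\infty$. As $r_n(0)=r_B$ is fixed, $r_n$ has to drop from a height of order $M_n$ down to $r_B$ within a time interval of length at most $T/2$; a mean value argument performed on the superlevel set $\{t\in[-T/2,0]:r_n(t)>1\}$ (this set need not be an interval, since the radial equation in \eqref{eq101} does admit local minima of $r_n$, but the contributions of its several components to $\int\dot r_n$ telescope to roughly $1-M_n$) then yields times $t_n\in[-T/2,0]$ with $r_n(t_n)>1$ and $\dot r_n(t_n)\to-\infty$. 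Applying Lemma \ref{lem8} to the time-translated solutions $x_n(\cdot+t_n)$ — its proof is insensitive to the length of the time interval and to the exact value of the base height, as long as the latter stays bounded away from $0$ — one gets $\dot r_n\to-\infty$ uniformly on $[-T,t_n]$, so that $r_n(-T)=r_n(t_n)-\int_{-T}^{t_n}\dot r_n\to+\infty$, contradicting $r_n(-T)=r_A$. Hence, after a further subsequence, $r_n\le R$ on $[-T/2,0]$ for some $R>0$.

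\emph{Step 2: bounded radius rules out unbounded angular momentum.} With $r_n\le R$ on $[-T/2,0]$ and $c_n(t)^2\ge c_n(0)^2\to+\infty$, the bracketed term $c_n^2/r_n^3-1/r_n^2$ on the right of \eqref{eq101} — which equals $r_n^{-2}(c_n^2/r_n-1)$ — tends to $+\infty$ uniformly on $[-T/2,0]$; since also $p_n\ge e^{-D_*T/2}$ there, we get $\gamma_n:=\min_{[-T/2,0]}\frac{d}{dt}(p_n\dot r_n)\to+\infty$. Integrating backward from $t=0$, and using that $\{\dot r_n(0)\}$ is bounded together with $p_n\le1$, one finds $\dot r_n(t)\le C+\gamma_n t$ for $t\in[-T/2,0]$ and $n$ large; a second integration forces $r_n(-T/2)\to+\infty$, contradicting $r_n\le R$ on $[-T/2,0]$. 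This final contradiction proves the lemma.

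I expect Step 1 to be the crux. The issue is to upgrade the soft fact ``$M_n\to+\infty$ while $r_n(0)=r_B$'' into a \emph{single} instant $t_n$ at which $r_n$ is bounded below \emph{and} $\dot r_n$ is arbitrarily negative, and then to feed this into Lemma \ref{lem8} from the moving base time $t_n$ rather than from $t=0$. The possible non-monotonicity of $r_n$ (a real phenomenon precisely when $|c_n|$ is large) is what makes the mean value argument non-trivial, and one must verify that the proof of Lemma \ref{lem8} needs only a positive lower bound on the base height.
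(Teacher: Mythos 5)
Your proof is correct and follows essentially the same two-case strategy as the paper's: first ruling out $\max_{[-T/2,0]}r_n\to+\infty$ by producing times $t_n$ with $r_n(t_n)\geq 1$ and $\dot r_n(t_n)\to-\infty$ and feeding them into Lemma \ref{lem8} from the moving base time (the paper is exactly as cavalier as you are about the fact that Lemma \ref{lem8} only needs a positive lower bound on the base height), and then deriving $r_n(-T/2)\to+\infty$ by twice integrating $\frac{d}{dt}(p_n\dot r_n)\to+\infty$ when $r_n$ is bounded and $|c_n(0)|\to+\infty$. The only deviation, which is harmless and arguably cleaner, is in your Step 2, where you get the uniform divergence of $c_n^2/r_n^3-1/r_n^2$ directly from $c_n(t)^2\geq c_n(0)^2$ and $r_n\leq R$ instead of routing through the potential energy $v_n$ and Lemma \ref{lem4} as the paper does.
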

\section{Collisions}\label{sec8}
In this section we continue the study, started in Section \ref{sec44}, of the rectilinear solutions of  the Kepler problem. Such solutions are governed by equation \eqref{1d}. An special emphasis will be put in studying the behaviour of solutions in connection with the singularity. As in Section \ref{sec44}, throughout this section we shall assume  that $\delta:]0,+\infty[\to\mathbb R$ is {\em nonnegative, bounded and continuously differentiable}.

\medbreak

Throughout this section let $r:]\alpha,\omega[\to\mathbb R$ be a solution of \eqref{1d}, assumed now maximal {\em both to the left and to the right}. Its associated energy is the function $h:]\alpha,\omega[\to\mathbb R$ defined by
	\begin{equation*}
	h(t):=\frac{1}{2}\dot r(t)^2-\frac{1}{r(t)}\,.
\end{equation*}
Notice that $h$ is decreasing. In fact, a direct computation gives
\begin{equation}\label{dhdt}
\dot h(t)=-\delta(r(t))\dot r(t)^2\leq 0.	
\end{equation}

\begin{lemma}\label{xlem51}
	{If $\alpha>-\infty$, then the following hold:
		\begin{enumerate}
			\item[(i)] $\displaystyle{\lim_{t\downarrow\alpha}r(t)=0}$.
			\item[(ii)] $\displaystyle{h(\alpha):=\lim_{t\downarrow\alpha}h(t)<+\infty}$.
			\item[(iii)] $\displaystyle{\lim_{t\downarrow\alpha}\frac{r(t)}{(t-\alpha)^{2/3}}=\sqrt[3]{\frac{9}{2}}}$\ . 
				\end{enumerate}}
\begin{proof}
{\em (i):} It follows from equation \eqref{1d} that at a critical point $t_0$, $\ddot r(t_0)<0$ and so $r$ attains a strict local maximum. In combination with Lemma \ref{lem31} it implies that $\lim_{t\downarrow\alpha}=0$, as claimed.

\medbreak

{\em (ii)} The previous argument actually gives some further information: there exists some $\alpha<t_0<\omega$ such that $\dot r(t)>0$ for all $t\in]\alpha,t_0[$. Using a contradiction argument we assume that $\lim_{t\downarrow\alpha}h(t)=+\infty$; then, after possibly replacing $t_0$ by a smaller number there is no loss of generality in further assuming that $h(t)>1$ for all $t\in]\alpha,t_0[$. Setting $r_0:=r(t_0)>0$ we see that there exists a $C^1$ function $\varphi:]0,r_0[\to\mathbb R$, $\varphi=\varphi(r)$, such that $h(t)=\varphi(r(t))$ for any $t\in]\alpha,t_0[$. Since
\begin{equation}\label{ue34}
\dot r(t)=\sqrt{2}\sqrt{\varphi(r(t))+\frac{1}{r(t)}},\qquad\alpha<t<t_0,
\end{equation}
differentiation and comparison with \eqref{dhdt} yields
$$0\geq\varphi'(r(t))=-\delta(r(t))\dot r(t)\geq-D_*\dot r(t)=-\sqrt{2}D_*\sqrt{\varphi(r(t))+\frac{1}{r(t)}},\qquad \alpha<t<t_0,$$
where $D_*\geq0$ is such that $\delta(r)\leq D_*$ for any $r>0$. Therefore,
$$\varphi'(r)\geq-\sqrt{2}D_*\sqrt{\varphi(r)+\frac{1}{r}}\geq-\sqrt{2}D_*\left(\sqrt{\varphi(r)}+\sqrt{\frac{1}{r}}\right)\geq-\sqrt{2}D_*\left(\varphi(r)+\frac{1}{\sqrt{r}}\right)\,, $$
for any $0<r<r_0$. Thus,
$$\frac{d}{dr}\left(e^{\sqrt{2}D_*r}\varphi(r)\right)\geq-\frac{\sqrt{2}D_*\,e^{\sqrt{2}D_*r}}{\sqrt{r}},\qquad 0<r<r_0,$$
which is not possible since $\lim_{r\downarrow 0}e^{\sqrt{2}D_*r}\varphi(r)=+\infty$ but the right hand side of the inequality is integrable on $]0,r_0[$. This contradiction concludes the proof.

\medbreak

{\em (iii)} By combining \eqref{ue34} and {\em (ii)} we see that
$$\lim_{t\downarrow\alpha}\sqrt{r(t)}\dot r(t)=\sqrt{2},$$
so that, by L'Hopital rule,
$$\lim_{t\downarrow\alpha}\frac{r(t)^{3/2}}{t-\alpha}=\frac{3}{\sqrt{2}},$$
implying the statement.
\end{proof}
\end{lemma}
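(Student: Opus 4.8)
The plan is to prove all three statements of Lemma \ref{xlem51} together, using the fact (already observed in the text and re-derived from \eqref{1d}) that $\ddot r<0$ at any critical point, so a maximal-to-the-left solution with $\alpha>-\infty$ must be strictly increasing on some initial interval $]\alpha,t_0[$, and hence $\lim_{t\downarrow\alpha}r(t)=\ell$ exists. Invoking Lemma \ref{lem31}, $\ell=0$, which is part \emph{(i)}. For part \emph{(ii)}, the idea is to translate the dynamics near $\alpha$ into a first-order ODE in the phase plane: on $]\alpha,t_0[$ monotonicity of $r$ lets us write $h(t)=\varphi(r(t))$ for a $C^1$ function $\varphi$ of $r$, and $\dot r=\sqrt{2}\sqrt{\varphi(r)+1/r}$; differentiating and comparing with $\dot h=-\delta(r)\dot r^2$ gives a differential inequality $\varphi'(r)\ge -\sqrt2 D_*\sqrt{\varphi(r)+1/r}$. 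Assuming by contradiction that $\varphi(r)\to+\infty$ as $r\downarrow0$, one bounds $\sqrt{\varphi+1/r}\le\sqrt\varphi+1/\sqrt r\le\varphi+1/\sqrt r$ (valid once $\varphi>1$), producing a linear differential inequality for $e^{\sqrt2 D_* r}\varphi(r)$ whose right-hand side is integrable near $0$; integrating would force $e^{\sqrt2 D_* r}\varphi(r)$ to stay bounded, contradicting $\varphi(r)\to+\infty$. Hence $h(\alpha)$ is finite.

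For part \emph{(iii)}, with $h(\alpha)$ finite the relation $\dot r=\sqrt2\sqrt{h+1/r}$ together with $r\to0$ gives $\sqrt{r}\,\dot r\to\sqrt2$ as $t\downarrow\alpha$, i.e. $\frac{d}{dt}\big(\tfrac23 r^{3/2}\big)\to\sqrt2$. Since $r^{3/2}\to0$ and $t-\alpha\to0$, L'Hopital's rule yields $\lim_{t\downarrow\alpha} r(t)^{3/2}/(t-\alpha)=\tfrac{3}{\sqrt2}$, which rearranges to the stated asymptotic $r(t)/(t-\alpha)^{2/3}\to(9/2)^{1/3}$.

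I would organize the write-up exactly in the order (i), (ii), (iii), since each step feeds the next: (i) provides $r\to0$ and the initial monotonicity interval, (ii) provides finiteness of the limiting energy, and (iii) is a short L'Hopital computation once (ii) is in hand. The main obstacle is part \emph{(ii)}: one must carefully justify that $\varphi$ is well-defined and $C^1$ on a left-neighbourhood of $0$ (using strict monotonicity of $r$ and the inverse function theorem, noting $\dot r>0$ there so $t\mapsto r(t)$ is a diffeomorphism onto its image), and then chain together the elementary inequalities $\sqrt{a+b}\le\sqrt a+\sqrt b$ and $\sqrt a\le a$ for $a\ge1$ to linearize the differential inequality in a form amenable to the integrating factor $e^{\sqrt2 D_* r}$; the contradiction then hinges on the integrability of $r^{-1/2}$ near $0$ versus the blow-up of $\varphi$. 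Everything else is routine.
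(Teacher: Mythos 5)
Your proposal is correct and follows essentially the same route as the paper's proof: monotonicity near $\alpha$ plus Lemma \ref{lem31} for \emph{(i)}, the phase-plane substitution $h=\varphi(r)$ with the linearized differential inequality and integrating factor $e^{\sqrt{2}D_*r}$ against the integrability of $r^{-1/2}$ for \emph{(ii)}, and the L'Hopital computation for \emph{(iii)}. No gaps to report.
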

Asymptotics of type {\em (iii)} were already obtained by Sperling \cite{Spe} (see also \cite[pp. 152-153]{Ort}) for the forced Kepler problem; however Sperling's results do not apply here directly since the damping force $-\delta(r)\dot r$ may not be bounded near the collision. We also remark that the corresponding version of Lemma \ref{xlem51}  when $t\uparrow\omega$ still holds if $\omega<+\infty$. In fact, statements {\em (i)} and {\em (iii)} can be readily translated to this situation with the same proofs. The situation in case {\em (ii)} is different: the result is still true but it needs a new proof. We shall skip the details since this is not needed in this paper; nevertheless, we emphasize the following consequence of the proofs of statements {\em (i)} and {\em (iii)} of Lemma \ref{xlem51}:

\begin{corollary}\label{xcor58}
	{If $\omega<+\infty$ and $\displaystyle{h(\omega):=\lim_{t\uparrow\omega}h(t)>-\infty}$, then $\displaystyle{\lim_{t\uparrow\omega}\frac{r(t)}{(\omega-t)^{2/3}}=\sqrt[3]{\frac{9}{2}}}$\ .}
\end{corollary}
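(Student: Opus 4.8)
The plan is to reuse, almost verbatim, the proofs of parts \emph{(i)} and \emph{(iii)} of Lemma~\ref{xlem51}, but run in forward time toward $t\uparrow\omega$ rather than backward toward $t\downarrow\alpha$. The only subtlety is that the hypothesis has changed: instead of assuming $\alpha>-\infty$ (which \emph{forced} a collision and forced $h(\alpha)<+\infty$ via part \emph{(ii)}), we now assume directly the two facts that those earlier arguments produced, namely that $\omega<+\infty$ and that $h(\omega):=\lim_{t\uparrow\omega}h(t)>-\infty$. So parts \emph{(i)} and \emph{(ii)} of Lemma~\ref{xlem51} are replaced by hypotheses, and we only have to redo part \emph{(iii)}.

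First I would establish the analogue of \emph{(i)}: $\lim_{t\uparrow\omega}r(t)=0$. Since $\omega<+\infty$ and $r$ is maximal to the right, Lemma~\ref{lem31} (its obvious forward-time mirror, which follows the same contradiction argument using the bounds \eqref{sti}--\eqref{eu25}) gives $\liminf_{t\uparrow\omega}r(t)=0$. But equation~\eqref{1d} shows $\ddot r<0$ at any critical point, so $r$ has only strict local maxima and no local minima in the open interval; combined with $\liminf_{t\uparrow\omega}r=0$ this forces $\dot r(t)<0$ on some interval $]t_1,\omega[$ and hence $\lim_{t\uparrow\omega}r(t)=0$. In particular there is $t_1<\omega$ with $\dot r(t)<0$ throughout $]t_1,\omega[$.

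Next, the quantitative step. On $]t_1,\omega[$ we can write $h(t)=\varphi(r(t))$ for a $C^1$ function $\varphi$ defined on $]0,r(t_1)[$, exactly as in the proof of \emph{(iii)}, because $r$ is strictly monotone there. Then $\tfrac12\dot r(t)^2=\varphi(r(t))+1/r(t)$, and since $h(\omega)>-\infty$ we have $\varphi(r)\to h(\omega)$ as $r\downarrow0$; multiplying by $r$ and letting $t\uparrow\omega$ (so $r\downarrow0$) gives
\[
\lim_{t\uparrow\omega}\sqrt{r(t)}\,|\dot r(t)|=\sqrt{2}.
\]
Finally, applying L'Hopital's rule to $r(t)^{3/2}/(\omega-t)$ — whose derivative ratio is $-\tfrac32\sqrt{r(t)}\,\dot r(t)\big/(-1)\to \tfrac{3}{\sqrt2}$ — yields $\lim_{t\uparrow\omega} r(t)^{3/2}/(\omega-t)=3/\sqrt2$, which is the claimed asymptotics $\lim_{t\uparrow\omega} r(t)/(\omega-t)^{2/3}=\sqrt[3]{9/2}$.

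The main (and only) obstacle is purely bookkeeping: one must make sure the forward-time version of Lemma~\ref{lem31} is actually available — it is, with the same proof, since nothing in that argument privileges the direction of time — and one must be slightly careful that the branch $\varphi$ is well-defined, which is guaranteed by the strict monotonicity of $r$ on $]t_1,\omega[$ obtained in the first step. No growth hypothesis on $\delta'$ near the origin is needed, consistent with the standing assumptions of the section.
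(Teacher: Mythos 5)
Your proposal is correct and is essentially the paper's own argument: the paper proves the corollary by observing that parts \emph{(i)} and \emph{(iii)} of Lemma \ref{xlem51} carry over to $t\uparrow\omega$ with the same proofs, with the finiteness of $h(\omega)$ (the analogue of part \emph{(ii)}, which would otherwise need a new proof) taken as a hypothesis — exactly your reduction. Your forward-time mirror of Lemma \ref{lem31}, the monotonicity argument giving $r(t)\to 0$, and the L'Hôpital computation from $\sqrt{r}\,|\dot r|\to\sqrt{2}$ all match the intended details.
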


We close this section with a result that estimates the length of the maximal definition interval of a solution from the final value (assumed finite) of its energy:

\begin{lemma}\label{lem52}{Assume that $-\infty<\alpha<\omega<+·\infty$ and $h(\omega)>-\infty$.
 Then, $h(\omega)<0$ and $\displaystyle{\omega-\alpha>-\frac{1}{2h(\omega)}}$.}
\begin{proof}
Corollary \ref{xcor58} implies in particular that $\lim_{t\uparrow\omega}r(t)=0$. Together with Lemma \ref{xlem51}{\em (i)} we see that there exists some point $t_0\in]\alpha,\omega[$ such that $\dot r(t_0)=0$. Since $h$ is decreasing, $h(t_0)=-1/r(t_0)\geq h(\omega)$; thus, $h(\omega)<0$ and $r_0:=r(t_0)\geq-1/h(\omega)$.

\medskip

The proof of Lemma \ref{xlem51}{\em (i)} implies that $\dot r(t)>0$ for every $t\in]\alpha,t_0[$. It follows that there exists an unique point $t_1\in]\alpha,t_0[$ such that $r(t_1)=r_0/2$,   and we see that 
$$\ddot r(t)\geq-\frac{1}{r(t)^2}>-\frac{4}{r_0^2},\qquad t_1<t<t_0,$$
 and integration gives
$$\frac{r_0}{2}=r(t_1)=r_0-\int_{t_1}^{t_0}\dot r(s)ds=r_0+\int_{t_1}^{t_0}(s-t_1)\ddot r(s)ds>r_0-\frac{4}{r_0^2}\int_{t_1}^{t_0}(s-t_1)ds=r_0-\frac{2}{r_0}(t_0-t_1)^2,$$
implying that 
$$\omega-\alpha>t_0-t_1>\frac{r_0}{2}\geq-\frac{1}{2h(\omega)},$$
and thus concluding the proof. 
\end{proof}
\end{lemma}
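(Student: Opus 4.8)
The plan is to reduce everything to the subinterval $[t_1,t_0]$ identified in the proof, where $r$ climbs from $r_0/2$ to $r_0$, and to compare the damped motion against the free-fall motion. First I would invoke Corollary \ref{xcor58} to conclude $\lim_{t\uparrow\omega}r(t)=0$; combined with the structural fact from the proof of Lemma \ref{xlem51}\emph{(i)} that any interior critical point of $r$ is a strict local maximum, this produces a (necessarily unique, since it is a maximum over a function that rises then falls) critical point $t_0\in]\alpha,\omega[$ with $\dot r(t_0)=0$. Monotonicity of $h$ then gives $h(\omega)\le h(t_0)=-1/r(t_0)<0$, so $h(\omega)<0$ and $r_0:=r(t_0)\ge -1/h(\omega)$. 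This handles the sign statement and reduces the quantitative claim to showing $\omega-\alpha>r_0/2$.

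Next I would locate $t_1\in]\alpha,t_0[$ with $r(t_1)=r_0/2$; such a point exists and is unique because $\dot r>0$ throughout $]\alpha,t_0[$ (again from the proof of Lemma \ref{xlem51}\emph{(i)}, which shows $r$ is increasing up to its first critical point). On $[t_1,t_0]$ one has $r(t)\le r_0$, hence the simple bound $\ddot r(t)=-\delta(r)\dot r-1/r^2\ge -1/r^2>-4/r_0^2$ using $\delta\ge0$ and $\dot r\ge0$. The key step is then a double integration: writing $r_0-r(t_1)=\int_{t_1}^{t_0}\dot r\,ds$ and integrating by parts (or using Taylor with integral remainder) to get $r_0/2 = r_0 - \int_{t_1}^{t_0}(s-t_1)(-\ddot r(s))\,ds$, the lower bound on $\ddot r$ forces $r_0/2 > r_0 - \frac{2}{r_0}(t_0-t_1)^2$, i.e.\ $(t_0-t_1)^2>r_0^2/4$, so $\omega-\alpha\ge t_0-t_1>r_0/2\ge -1/(2h(\omega))$.

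The only genuinely delicate point is making sure the ingredients about the shape of $r$ near $\alpha$ — that interior critical points are maxima and that $r$ is strictly increasing on $]\alpha,t_0[$ up to its first critical point — are legitimately available; these come verbatim from the proof of Lemma \ref{xlem51}\emph{(i)}, and I would simply cite that proof rather than repeat the argument. Everything else is elementary: the comparison $\ddot r>-4/r_0^2$ and the two integrations are routine, and one should be slightly careful that the integration-by-parts identity $\int_{t_1}^{t_0}\dot r\,ds = (t_0-t_1)\dot r(t_0) - \int_{t_1}^{t_0}(s-t_1)\ddot r(s)\,ds$ uses $\dot r(t_0)=0$, which is exactly why $t_0$ was chosen to be the critical point.
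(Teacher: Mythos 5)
Your proposal follows the paper's proof step for step (same $t_0$ and $t_1$, same comparison $\ddot r>-4/r_0^2$, same integration by parts using $\dot r(t_0)=0$), so strategically there is nothing to distinguish. But there is a genuine quantitative gap in the final computation, and it is the very same slip that appears in the paper's own proof: you assert that the lower bound on $\ddot r$ forces $r_0/2>r_0-\frac{2}{r_0}(t_0-t_1)^2$, whereas
$$\frac{4}{r_0^2}\int_{t_1}^{t_0}(s-t_1)\,ds=\frac{4}{r_0^2}\cdot\frac{(t_0-t_1)^2}{2}=\frac{2}{r_0^2}\,(t_0-t_1)^2,$$
with $r_0^2$, not $r_0$, in the denominator. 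With the correct constant the inequality reads $r_0/2>r_0-\frac{2}{r_0^2}(t_0-t_1)^2$, which gives $(t_0-t_1)^2>r_0^3/4$, i.e.\ $t_0-t_1>r_0^{3/2}/2$; this implies the desired $t_0-t_1>r_0/2$ only when $r_0\geq1$. The discrepancy is not cosmetic. In Kepler scaling one has $[t]=[r]^{3/2}$, so a scale-free bound of the form $\omega-\alpha>-1/(2h(\omega))$ cannot hold in general; and indeed in the frictionless case $\delta\equiv0$ the collision-to-collision time is exactly $\omega-\alpha=2\pi a^{3/2}$ with $a=-1/(2h(\omega))$, which is \emph{smaller} than $-1/(2h(\omega))$ as soon as $h(\omega)<-2\pi^2$. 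So both your step and the paper's step fail, and the conclusion as stated is false for sufficiently negative energies. The correct output of the argument is $\omega-\alpha>\frac{1}{2}\left(-1/h(\omega)\right)^{3/2}$, and that is what the lemma should claim; fortunately this weaker, dimensionally consistent bound is all that is used later (in case \emph{(ii)} of the proof of Lemma \ref{lem63} one only needs $h(t_{i})\to-\infty$ to follow from $t_{i-1}-t_{i}\to0$, which the corrected bound still delivers).

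Two smaller remarks. To justify $-1/r(t)^2>-4/r_0^2$ on $]t_1,t_0[$ you need $r(t)>r_0/2$ there — which holds because $r$ increases from $r_0/2$ at $t_1$ to $r_0$ at $t_0$ — not $r(t)\leq r_0$ as you wrote; the latter gives an inequality in the wrong direction. The rest (existence and uniqueness of $t_0$ and $t_1$ via the structural facts from the proof of Lemma \ref{xlem51}\emph{(i)}, the monotonicity of $h$, and the integration by parts anchored at $\dot r(t_0)=0$) is correct and matches the paper exactly.
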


\section{The Levi-Civita regularization for the rectilinear Kepler problem}\label{sec6}
In this section we still assume  that $\delta:]0,+\infty[\to\mathbb R$ is nonnegative, bounded and continuously differentiable. In addition, mimicking  {\bf [D$_{2}$]} we introduce the hypothesis that $\lim_{r\downarrow 0}\sqrt{r}\,\delta'(r)=0$.
Let $r:]\alpha,0]\to]0,+\infty[$ be a solution of \eqref{1d} and set $$s(t):=\int_0^t\frac{d\tau}{r(\tau)},\quad\alpha<t\leq 0,\qquad\qquad A:=\lim_{t\downarrow\alpha}s(t)<0.$$ Then, the pair of functions $(u,E):]A,0]\to]0,+\infty[\times\mathbb R$ defined by
$$u(s(t)):=\sqrt{r(t)},\quad E(s(t)):=h(t),\qquad\qquad\alpha<t\leq 0,$$
solves the system
\begin{equation}\label{LC}
\begin{cases}
u''+\delta(u^2)u^2u'=\frac{Eu}{2},\\
E'=-4\delta(u^2)(u')^2,
\end{cases}\tag{{\em LC}}
\end{equation}
on the invariant manifold
$$\mathcal M:=\Big\{(u,u',E)\in]0,+\infty[\times\mathbb R\times\mathbb R:2(u')^2-Eu^2=1\Big\}.$$
(We denote by primes the derivatives with respect to the independent variable $s$; derivatives with respect to $t$ are denoted by dots).

\medbreak

Conversely, given a solution $(u,E):]A,0]\to]0,+\infty[\times\mathbb R$ of (\ref{LC}\,) on $\mathcal M$ we set
$$t(s):=\int_0^s u(\sigma)^2d\sigma,\quad A<s\leq 0,\qquad\qquad\alpha:=\lim_{s\downarrow A}t(s)<0,$$
and we see that the function $r=r(t)$ defined on the time interval $]\alpha,0]$ by $$r(t(s)):=u(s)^2,\qquad A<s\leq 0,$$ is a solution of \eqref{1d} with energy $h:]\alpha,0]\to\mathbb R$ given by $h(t(s))=E(s)$ for any $s\in]A,0]$.

\medbreak

In this situation we shall say that $(u,E)$ is the Goursat transform of $r$, and $r$ is the Levi-Civita transform of $(u,E)$. The Goursat and Levi-Civita transforms define mutually-inverse, bijective correspondences from the set of solutions $r=r(t)>0$ of \eqref{1d} defined on $]\alpha,0]$ for some $-\infty\leq\alpha<0$, into the set of solutions $(u,E)=(u(s),E(s))\in]0,+\infty[\times\mathbb R$ of (\ref{LC}\,) on $\mathcal M$ defined on $]A,0]$ for some $-\infty\leq A<0$. 

\medbreak

Notice now that (\ref{LC}\,) is naturally defined for $(u,E)\in\mathbb R^2$ and does not require that $u$ be positive. In fact, the map $u\in\mathbb R\mapsto\delta(u^2)$ is continuously differentiable thanks to our requirement that $\delta'$ is bounded near the origin.   This is the motivation behind the following statement. Here $r:]\alpha,0]\to]0,+\infty[$ is an arbitrary solution of \eqref{1d} and $(u,E):]A,0]\to\mathbb R^2$ stands for its Goursat transform. 

\medbreak

{\em If $(u,E)$ is maximal to the left as a solution of {\em (\ref{LC}\,)} then $r$ is also maximal to the left as a solution of \eqref{1d}. The converse is not true in general. }
	
	\medbreak
	
	In order to support the last part of this assertion we shall prove the following:
\begin{lemma}\label{lem61}{If $\dot r(t)>0$ for every $t\in]\alpha,0]$, then, $A>-\infty$ and $(u,E)$ is not maximal to the left as a solution of ({\em \ref{LC}}).}
\begin{proof}
If the interval $]\alpha,0]$ is not maximal to the left	for the solution $r$ of \eqref{1d} then the result is clear. Thus, we may henceforth assume that $]\alpha,0]$ is maximal to the left. Notice that
	$$\ddot r(t)=-\delta(r(t))\dot r(t)-\frac{1}{r(t)^2}\leq 0,\qquad t\in]\alpha,0],$$
so that $r$ is concave on $]\alpha,0]$. Consequently, its graph stays below that of its tangent line at $t=0$, i.e.,
$$r(t)\leq r(0)+\dot r(0)t,\qquad t\in]\alpha,0],$$
 and since $r$ is positive on $]\alpha,0]$ we see that $\alpha>-\infty$. Lemma \ref{xlem51} then applies and states that $A>-\infty$, $\lim_{s\downarrow A}E(s)<+\infty$, and  $\lim_{s\downarrow A}u(s)=0$. Since $(u,E)$ stays in $\mathcal M$ we see that $\lim_{s\downarrow A}u'(s)=1/\sqrt{2}$, and thus, the solution $(u,E)$ can be extended to the left of $A$. It proves the result.
\end{proof}	
\end{lemma}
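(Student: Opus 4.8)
The plan is to prove Lemma~\ref{lem61} by exploiting the concavity of $r$ in the regime where $\dot r>0$. The key observation is that if $\dot r(t)>0$ throughout $]\alpha,0]$, then equation~\eqref{1d} forces $\ddot r=-\delta(r)\dot r-1/r^2<0$, so $r$ is strictly concave. A concave positive function defined on an interval unbounded from below would eventually have to become negative (its graph lies below any tangent line, and a tangent line with nonzero slope goes to $-\infty$); hence $\alpha>-\infty$. Once we know $\alpha>-\infty$ and $r$ is maximal to the left, Lemma~\ref{xlem51} applies directly: it gives $\lim_{t\downarrow\alpha}r(t)=0$ together with the crucial fact $h(\alpha)=\lim_{t\downarrow\alpha}h(t)<+\infty$ and the asymptotics $r(t)\sim(9/2)^{1/3}(t-\alpha)^{2/3}$.

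From there I would translate the information back through the Goursat change of variables. Since $s(t)=\int_0^t d\tau/r(\tau)$ and $r(\tau)$ behaves like a constant times $(\tau-\alpha)^{2/3}$ near $\alpha$, the integrand behaves like $(\tau-\alpha)^{-2/3}$, which is integrable; therefore $A=\lim_{t\downarrow\alpha}s(t)>-\infty$. Next, $u(s(t))=\sqrt{r(t)}\to 0$ as $t\downarrow\alpha$, i.e. $\lim_{s\downarrow A}u(s)=0$, and $E(s(t))=h(t)\to h(\alpha)<+\infty$, so $E$ stays bounded near $A$. Finally, because the solution lives on the manifold $\mathcal M=\{2(u')^2-Eu^2=1\}$, we get $2(u'(s))^2=1+E(s)u(s)^2\to 1$ as $s\downarrow A$; combined with $\dot r>0$ (which forces $u'>0$ near the endpoint) this yields $\lim_{s\downarrow A}u'(s)=1/\sqrt{2}$. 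So $(u,u',E)$ has a well-defined finite limit as $s\downarrow A$, lying in the open set where the right-hand side of \eqref{LC} is defined, hence by the standard continuation theory for ODEs the solution $(u,E)$ extends past $A$: it is not maximal to the left.

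The main obstacle, and the one requiring the most care, is making sure that Lemma~\ref{xlem51} is genuinely applicable in the form needed. That lemma is stated for a solution $r:\,]\alpha,\omega[\to\mathbb R$ that is maximal \emph{both} to the left and to the right with $\alpha>-\infty$; here our solution is a priori only given on $]\alpha,0]$, so I would first reduce to the case where $]\alpha,0]$ is itself the maximal-to-the-left interval (if it is not, the conclusion $A>-\infty$ and nonmaximality of $(u,E)$ is immediate, since extending $r$ past $\alpha$ extends $(u,E)$ past $A$ as well via the correspondence). Once maximality to the left is assumed, the concavity argument delivers $\alpha>-\infty$, and the remaining hypotheses of Lemma~\ref{xlem51} are met. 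The only other point needing a line of justification is the integrability of $1/r(\tau)$ near $\alpha$, but this is an immediate consequence of the $(t-\alpha)^{2/3}$ asymptotics in Lemma~\ref{xlem51}(iii). Everything else is a routine unwinding of the definitions of the Goursat transform and of the constraint defining $\mathcal M$.
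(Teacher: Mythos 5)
Your proposal is correct and follows essentially the same route as the paper: reduce to the case where $]\alpha,0]$ is maximal to the left, use concavity of $r$ to get $\alpha>-\infty$, invoke Lemma \ref{xlem51}, and then read off the finite limit of $(u,u',E)$ at $s=A$ via the constraint defining $\mathcal M$ to conclude by standard continuation. Your explicit justification of $A>-\infty$ from the integrability of $1/r\sim C(t-\alpha)^{-2/3}$ is a detail the paper leaves implicit, but the argument is the same.
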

\begin{lemma}\label{lem62}{Let $(u,E):]A,0]\to\mathbb R^2$ be a solution of ({\em \ref{LC}}) on $\mathcal M$, maximal to the left. If $A>-\infty$ then there exists some $s_1\in]A,0]$ such that $u'(s)u(s)<0$ for all $s\in]A,s_1]$.}
\begin{proof} We distinguish two cases depending on the sign of $E$.
	
	\medskip
	
	{\em Case I: There exists some $s_0\in]A,0]$ such that $E(s_0)\geq 0$.} The second equation of (\ref{LC}\,) implies that $E$ is decreasing, and we deduce that  $E(s)\geq 0$  for all $s\in]A,s_0]$. Since our solution stays on $\mathcal M$ we see that $u'(s)\not=0$ for all $s\in]A,s_0]$, and we deduce that there exists some $s_1\in]A,s_0]$ such that $u(s)\not=0$ for all $s\in]A,s_1]$.
	
	\medskip
	
	If $u'(s)u(s)<0$ for all $s\in]A,s_1]$ we are done; thus, let us assume that $u'(s)u(s)>0$ for all $s\in]A,s_1]$. By introducing a translation in the time variable $s$ there is no loss of generality in assuming that $s_1=0$, and after possibly replacing $u$ by $-u$ we may assume that $u(s),u'(s)>0$ for all $s\in]A,0]$. The Levi-Civita transform $r:]\alpha,0]\to]0,+\infty[$ of $(u,E)$ then satisfies that $\dot r(t)>0$ for all $t\in]\alpha,0]$, and Lemma \ref{lem61} implies that $(u,E)$ is not maximal, a contradiction. It proves the result in this case.
	
	\medskip
	
	{\em Case II: $E(s)<0$ for all $s\in]A,0]$.} Since $E$ is decreasing on $]A,0]$ it implies that $E$ is bounded on $]A,0]$. Since our solution stays in $\mathcal M$ we see that $2u'(s)^2\leq 1$ for all $s\in]A,0]$, and thus, $u'$ is also bounded on $]A,0]$. Finally, integration implies that $u$ is again bounded on $]A,0]$. The usual prolongation theory for ordinary differential equations implies that $(u,E)$ is not maximal, a contradiction. It concludes the proof.	\end{proof}
\end{lemma}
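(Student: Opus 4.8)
The plan is to show that whenever the Levi--Civita flow reaches the singular time $A>-\infty$ in finite $s$-time, the product $u'u$ must be strictly negative on a left neighbourhood of $A$; equivalently, the regularized solution approaches $u=0$ ``coming from the right'' with $u$ and $u'$ of opposite sign, as one expects of a bounce. I would split the argument according to whether the energy $E$ ever becomes nonnegative on $]A,0]$, using the fact (immediate from the second equation of (\ref{LC}\,)) that $E$ is monotone decreasing in $s$.

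First I would treat the case in which $E(s_0)\ge 0$ for some $s_0\in]A,0]$; then $E(s)\ge 0$ for all $s\in]A,s_0]$ by monotonicity. Since $(u,E)$ lies on $\mathcal M$, i.e. $2(u')^2-Eu^2=1$, we get $2(u')^2=1+Eu^2\ge 1$, so $u'$ never vanishes on $]A,s_0]$, hence $u'$ has constant sign there and in particular $u$ is strictly monotone; consequently $u$ vanishes at most once on $]A,s_0]$, so there is some $s_1\in]A,s_0]$ with $u(s)\ne 0$ throughout $]A,s_1]$. On this subinterval $u'u$ has constant sign. If that sign is negative we are done. If it were positive, then after a harmless translation making $s_1=0$ and (if needed) replacing $u$ by $-u$ we would have $u,u'>0$ on all of $]A,0]$; its Levi--Civita transform $r=r(t)$ on $]\alpha,0]$ would then satisfy $\dot r>0$ throughout (recall $\dot r$ has the same sign as $u'u$, since $r=u^2$ and $\dot r = 2uu'/r \cdot (\cdots)$ — more precisely $r(t(s))=u(s)^2$ and $dr/ds = 2uu'$, while $dt/ds=u^2>0$). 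But then Lemma \ref{lem61} would force $(u,E)$ to be non-maximal to the left, contradicting the hypothesis. This disposes of Case I.

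In the remaining case $E(s)<0$ for every $s\in]A,0]$. Monotonicity of $E$ gives a two-sided bound on $E$ (it is bounded above by $E(0)$ and below by $0$, actually by $E$ being negative and decreasing it is bounded on the bounded-above-by-$0$ side and stays below $E(0^-)$... the point is simply that $E$ is bounded on $]A,0]$). From $2(u')^2=1+Eu^2\le 1$ we get $|u'|\le 1/\sqrt2$ on $]A,0]$, and then integrating from $0$ backwards over the finite interval $]A,0]$ shows $u$ is bounded too; with $u,u',E$ all bounded near the finite left endpoint $A$, the standard continuation theorem for ODEs extends $(u,E)$ past $A$, contradicting maximality. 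Hence Case II cannot occur at all, and the lemma holds. The only delicate point is keeping the bookkeeping of signs straight under the normalizations (translation in $s$, sign flip $u\mapsto -u$) in Case I and making sure the reduction to Lemma \ref{lem61} is legitimate; the rest is the energy relation on $\mathcal M$ plus elementary prolongation theory.
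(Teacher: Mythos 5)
Your proof is correct and follows the paper's argument essentially step for step: the same case split on the sign of $E$, the same use of the manifold relation $2(u')^2-Eu^2=1$ to show $u'$ never vanishes when $E\ge 0$ (Case I) and to bound $u'$ when $E<0$ (Case II), the same reduction to Lemma \ref{lem61} after translating $s_1$ to $0$ and flipping the sign of $u$, and the same boundedness-plus-prolongation contradiction in Case II. The only blemish is the momentarily garbled description of the bound on $E$ in Case II (the correct statement is $E(0)\le E(s)<0$ since $E$ is nonincreasing in $s$), which you correct yourself before using it.
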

\begin{lemma}\label{lem63}
{Let $(u,E):]A,0]\to\mathbb R^2$ be a solution of (LC) on the invariant manifold $\mathcal M$, maximal to the left. Then $\int_{A}^0u(s)^2ds=+\infty$.}
\begin{proof} Let the strictly increasing function $t:]A,0]\to\mathbb R$ be defined by $t(s):=\int_0^s u(\sigma)^2d\sigma$, assume, by a contradiction argument, that $\alpha:=\lim_{s\downarrow A}t(s)>-\infty$, and let the function $r:]\alpha,0[\to\mathbb R$ be defined by $r(t(s)):=u(s)^2$. We shall distinguish three cases and find a contradiction in each of them.
	
	\medskip

{\em (i)} 	$A>-\infty$. Then, Lemma \ref{lem62} states the existence of some $s_1\in]A,0]$ such that $u'(s)u(s)<0$ for every $s\in]A,s_1]$. After a translation in the independent variable $s$ and possibly replacing $u$ by $-u$ we may assume that $s_1=0$ and $u'(s)<0<u(s)$ for all $s\in]A,0]$. Thus, $r:]\alpha,0]\to]0,+\infty[$ is the Levi-Civita transform of $(u,E)$, and in particular, it is a solution of \eqref{1d}, maximal to the left. It satisfies $\dot r(t)<0$ for every $t\in]\alpha,0]$, and Lemma \ref{xlem51}{\em (i)} implies that $\alpha=-\infty$. It concludes the proof in this case.

\medskip

{\em (ii)} $A=-\infty$ and $u$ has infinitely many zeroes. The definition of the manifold $\mathcal M$ implies that these zeroes are nondegenerate, and in particular isolated; thus, they make up an ordered sequence $\ldots<s_2<s_1<s_0\leq 0$. The sequence $t_i:=t(s_i)$ is strictly decreasing and convergent, and  for each $i\geq 1$ the restriction of $r$ to $]t_{i},t_{i-1}[$ is a maximal solution of \eqref{1d} satisfying $h(t_{i-1})>-\infty$. Then, Lemma \ref{lem52} states that the sequence of energies $h(t_{i}):=\lim_{t\downarrow t_{i}}h(t)$ satisfies $h(t_{i})\to-\infty$. On the other hand $h(t(s))=E(s)$, and the second equation of system $(LC)$ implies that $\{h(t_{i})\}$ is increasing, a contradiction.

\medskip

{\em (iii)}  $A=-\infty$ and the set $Z$ of zeroes of $u$ in $]-\infty,0]$ is finite. Write $$\{-T,0\}\cup t(Z)=\{-T=t_p<t_{p-1}<\ldots<t_0=0\},$$ and observe that the restriction of $r$ to each interval $]t_i,t_{i-1}[$ is a solution of \eqref{1d} satisfying $h(t_{i-1})>-\infty$. The combination of Lemma \ref{xlem51}{\em (iii)} and Corollary \ref{xcor58} then implies that $\int_{t_{i}}^{t_{i-1}}\frac{1}{r(t)}dt<+\infty$ for each $i$, and therefore $\int_{-T}^0\frac{1}{r(t)}dt<+\infty$, a contradiction. The proof is complete.
\end{proof}
\end{lemma}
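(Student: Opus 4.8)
The plan is to argue by contradiction. Suppose $\int_{A}^{0}u(s)^2\,ds<+\infty$; since this integral equals $-\alpha$, where $\alpha:=\lim_{s\downarrow A}t(s)$ and $t(s):=\int_0^s u(\sigma)^2\,d\sigma$, the assumption is precisely that $\alpha>-\infty$. Define $r$ by $r(t(s)):=u(s)^2$; because the relation $2(u')^2-Eu^2=1$ on $\mathcal M$ forces every zero of $u$ to be simple, these zeroes are isolated, and on any subinterval of $]\alpha,0]$ avoiding them $r$ is a genuine positive solution of \eqref{1d}. The contradiction I am aiming for is a clash between two evaluations of $\int_{\alpha}^{0}\frac{dt}{r(t)}$: the substitution $t=t(s)$ identifies it with $\int_{A}^{0}ds$, which is $+\infty$ once $A=-\infty$, whereas a local analysis of $r$ near each collision time shows it to be finite. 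Since $u$ may vanish, and a priori infinitely often, the way the collisions are organized is the delicate point; accordingly I would split into the three cases $A>-\infty$, then $A=-\infty$ with $u$ having infinitely many zeroes, then $A=-\infty$ with $u$ having finitely many zeroes.

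\emph{Case $A>-\infty$.} Here I would apply Lemma \ref{lem62} to obtain some $s_1\in]A,0]$ with $u'(s)u(s)<0$ on $]A,s_1]$; after translating in $s$ and possibly replacing $u$ by $-u$ (both \eqref{LC} and $\mathcal M$ are invariant under this), I may take $s_1=0$ and $u>0>u'$ throughout $]A,0]$. Then $r:]\alpha,0]\to\,]0,+\infty[$ is literally the Levi-Civita transform of $(u,E)$, hence a solution of \eqref{1d} maximal to the left, and $\dot r=2u'/u<0$. A decreasing $r$ with $r(0)>0$ cannot have $\liminf_{t\downarrow\alpha}r(t)=0$, so Lemma \ref{lem31} (equivalently Lemma \ref{xlem51}\emph{(i)}) is violated. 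Hence this case does not arise.

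\emph{Case $A=-\infty$, $u$ with infinitely many zeroes.} These zeroes, being simple, are isolated, so they list as $\cdots<s_2<s_1<s_0\le 0$ with $s_i\to-\infty$; put $t_i:=t(s_i)$, a decreasing sequence with limit $\alpha>-\infty$. On each $]t_i,t_{i-1}[$ the function $r$ is a maximal solution of \eqref{1d} running into collision at both ends, with finite one-sided energy limit $h(t_{i-1})=E(s_{i-1})$. Lemma \ref{lem52} then yields $h(t_{i-1})<0$ and $t_{i-1}-t_i>-1/(2h(t_{i-1}))$; summing over $i$ and using $\sum_i(t_{i-1}-t_i)=t_0-\alpha<+\infty$ forces $h(t_{i-1})\to-\infty$. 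But the second equation of \eqref{LC} makes $E$ non-increasing in $s$ (as $\delta\ge 0$), so $h(t_i)=E(s_i)$ is an \emph{increasing} sequence, which cannot tend to $-\infty$ — a contradiction.

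\emph{Case $A=-\infty$, $u$ with finitely many zeroes.} The finitely many collision instants $t(Z)$, with $Z$ the zero set of $u$, cut $]\alpha,0]$ into finitely many pieces on each of which $r>0$ solves \eqref{1d}; at every endpoint that is a collision — including the left endpoint $\alpha$, for which one first notes that $r$ restricted to the leftmost piece is maximal to the left and hence, by Lemma \ref{lem31} together with $\alpha>-\infty$, satisfies $\liminf_{t\downarrow\alpha}r=0$ — the energy has a finite one-sided limit. Lemma \ref{xlem51}\emph{(i),(iii)} and Corollary \ref{xcor58} then supply the collision asymptotics $r(t)/|t-t_\ast|^{2/3}\to\sqrt[3]{9/2}$, so $1/r$ is integrable near each such endpoint and $\int_{\alpha}^{0}\frac{dt}{r(t)}<+\infty$; as this integral also equals $\int_{-\infty}^{0}ds=+\infty$, we reach the final contradiction and the lemma follows. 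The main obstacle, I expect, is precisely this last case — in particular verifying that the leftmost arc of $r$ terminates at $\alpha$ in a bona fide $|t-\alpha|^{2/3}$ collision rather than oscillating, which is exactly what Lemmas \ref{lem31} and \ref{xlem51} are invoked to exclude.
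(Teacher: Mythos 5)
Your proof is correct and follows essentially the same route as the paper's: the same reduction to the contradiction hypothesis $\alpha>-\infty$, the same three-case split ($A>-\infty$ handled via Lemma \ref{lem62} and Lemma \ref{xlem51}\emph{(i)}; $A=-\infty$ with infinitely many zeroes via Lemma \ref{lem52} and the monotonicity of $E$; $A=-\infty$ with finitely many zeroes via the collision asymptotics of Lemma \ref{xlem51}\emph{(iii)} and Corollary \ref{xcor58}), and the same final contradictions. The extra care you devote in the last case to the left-maximality of the leftmost arc of $r$ is a point the paper passes over silently, but it does not alter the argument.
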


\section{The Levi-Civita regularization for the planar Kepler problem}\label{sec7}
In this section we prove Proposition \ref{prop12}, which was key in our proof of Theorem \ref{th1}. Henceforth we assume both {\bf [D$_{1-2}$]}.

\medbreak 

The Levi-Civita regularization applies not only to the 1-dimensional Kepler problem \eqref{1d} but also to the more general planar problem \eqref{eu1}. The well-known procedure goes as follows. Let $x:]\alpha,0]\to\mathbb R^2\backslash\{0\}\equiv\mathbb C\backslash\{0\}$ be a given solution of \eqref{eu1}, and let $w_0\in\mathbb C$ be such that $w_0^2=x_0$. The transformation from $\mathbb C\backslash\{0\}$ to itself given by $z\mapsto z^2$ is a covering map, and thus, there exists a (unique) continuous lifting $z:]\alpha,0]\to\mathbb C\backslash\{0\}$ with $z(0)=w_0$ and $z(t)^2=x(t)$ for every $\alpha<t\leq 0$. Let $h,s:]\alpha,0]\to\mathbb R$ (energy and new time) be defined by
$$h(t):=\frac{1}{2}\,|\dot x(t)|^2-\frac{1}{|x(t)|},\qquad s(t):=\int_0^t\frac{1}{|x(\tau)|}\,d\tau\,.$$ Then, setting $A:=\lim_{t\downarrow\alpha}s(t)$ we see that the pair of functions $(w,E):]A,0]\to(\mathbb C\backslash\{0\})\times\mathbb R$ defined by
$$w(s(t)):=z(t),\qquad E(s(t)):=h(t),$$
solves the system
\begin{equation*}
	(\widehat{LC})\begin{cases}
		\displaystyle{w''+D(w^2)|w|^2w'=\frac{Ew}{2}}\\
		E'=-4D(w^2)|w'|^2
	\end{cases},\qquad w\in\mathbb C,\ E\in\mathbb R,
\end{equation*}
on the invariant manifold
$$\widehat{\mathcal M}:=\Big\{(w,w',E)\in(\mathbb C\backslash\{0\})\times\mathbb C\times\mathbb R:2|w'|^2-E|w|^2=1\Big\}.$$
In addition, $w(0)=w_0$. Notice that picking $-w_0$ instead of $w_0$ leads to the pair $(-w,E)$ in the place of $(w,E)$.

\medbreak

Conversely, given a solution $(w,E):]A,0]\to(\mathbb C\backslash\{0\})\times\mathbb R$ of $(\widehat{LC})$ on $\widehat{\mathcal M}$, letting $t:]A,0]\to\mathbb R$ be defined by 
$$t(s):=\int_0^s|w(\sigma)|^2d\sigma,\qquad A<s\leq 0,$$ 
and setting $\alpha:=\lim_{s\downarrow A}t(s)$, we see that the function $x=x(t)$ defined on $]\alpha,0]$ by $x(t(s)):=w(s)^2$ is a solution of \eqref{eu1} with energy $$h:]\alpha,0]\to\mathbb R,\qquad h(t(s))=E(s)\text{ for any }A<s\leq 0,$$
and moreover, $x(0)=w_0^2$ for $w_0:=w(0)$.

\medbreak

Under these circumstances we shall say that $(w,E)$ is the Goursat transform of $(x,w_0)$ and $(x,w_0)$ is the Levi-Civita transform of $(w,E)$. With this in mind, the Goursat and Levi-Civita transforms define mutually-inverse, bijective correspondences between the set of couples $(x,w_0)$, where $x=x(t)$ is a solution of \eqref{eu1} defined on $]\alpha,0]$ for some $-\infty\leq\alpha<0$ and $w_0$ is a choice of the square-root of $x(0)$, and the set of pairs $(\pm w,E)$, where $(w,E)=(w(s),E(s))\in(\mathbb C\backslash\{0\})\times\mathbb R$ is a solution  of (\ref{LC}\,) on $\widehat{\mathcal M}$ defined on $]A,0]$ for some $-\infty\leq A<0$. 

\medbreak

Observe also that the initial conditions of $w,E$ and $x$ at $t=0$ are related as follows: $w(0)$ is a square root of $x(0)$,  ${\displaystyle\dot w(0)=\frac{|x(0)|\,\dot x(0)}{2w(0)}}$, and $\displaystyle{E(0)=\frac{|\dot x(0)|^2}{2}-\frac{1}{|x(0)|}}$. This fact will be used later.

\medbreak

At this moment one might ask about the connections between the Goursat or Levi-Civita transforms (as defined in the previous section in the one dimensional situation), and the newly-defined notions of Goursat/Levi-Civita transforms for the planar problem. With this goal let
$x(t)=r(t)v_0,\ t\in]\alpha,0]$, be a rectilinear solution of \eqref{eu1}. We assume that $|v_0|=1$ and $r$ is positive, so that $r$ must be a solution of \eqref{1d} for $\delta(r):=D(rv_0)$. Then, letting $(u,E)$ be the Goursat transform of $r$, the Goursat transform $(\pm w,E)$ of $x$ is given by $w(s)=u(s)v_1$, where $v_1\in\mathbb C$ is one of the square roots of $v_0$. Similarly, if $(w,E):]-A,0]\to(\mathbb C\backslash\{0\})\times\mathbb R$ is a solution of $(\widehat{LC})$ on $\widehat{\mathcal M}$, and this solution is rectilinear in the sense that $w(s)=u(s)v_1$ for some function $u:]A,0]\to]0,+\infty[$ and some $v_1\in\mathbb C$ with $|v_1|=1$, then $(u,E)$ must be a solution of (\ref{LC}\,) in $\mathcal M$ for $\delta(u):=D(uv_1^2)$, and the Levi-Civita  transform $x$ of $(\pm w,E)$ is given by $x(t)=r(t)v_1^2$, where $r=r(t)$ is the Levi-Civita transform of $(u,E)$.

\medbreak

On the other hand, the initial conditions in the rectilinear case are related as follows: if $x(0)=x_0$ and $\dot x(0)=\lambda x_0$ for some $x_0\in\mathbb C\backslash\{0\}$ and some $\lambda\in\mathbb R$, then $u(0)=|x_0|$, $u'(0)=|x_0|^{3/2}\lambda/2$, and $E(0)=\lambda^2|x_0|^2/2-1/|x_0|$.

\medbreak

We finally notice that, as a consequence of assumption {\bf [D$_2$]}, the map $\mathbb C\to\mathbb R$, $w\mapsto D(w^2)$ is continuously differentiable in the real sense (i.e., regarded as a map from $\mathbb R^2$ to $\mathbb R$). Thus, we can see $(\widehat{LC})$ as a $C^1$, autonomous system on $\mathbb C\times\mathbb R\equiv\mathbb R^2\times\mathbb R$.  

\begin{proof}[Proof of Proposition \ref{prop12}] It suffices to check that given a converging sequence $$\{(x_0^{(n)},\dot x_0^{(n)})\}_n\to(x_0^{*},\dot x_0^{*}),$$ with $(x_0^{(n)},\dot x_0^{(n)})\in\Omega$ for every $n\in\mathbb N$, $x_0^*\not=0$, $\dot x_0^*=\lambda x_0^*$ for some $\lambda\in\mathbb R$, and $(x_0^{*},\dot x_0^{*})\in\partial\Omega$, then $\big\{\mathcal X(x_0^{(n)},\dot x_0^{(n)})\big\}$ is convergent. 
	
	\medbreak

	With this goal, for each natural index $n$ we denote by $x_n:]\alpha_n,0]\to\mathbb R^2\backslash\{0\}\equiv\mathbb C\backslash\{0\}$ the solution of \eqref{eu1} with $x_n(0)=x_0^{(n)}$ and $\dot x^{(n)}(0)=\dot x_0^{(n)}$. The definition interval $]\alpha_{n},0]$ is chosen maximal to the left. The points  $(x_0^{(n)},\dot x_0^{(n)})$ being in $\Omega$ we see that $\alpha_n<-T$ for every $n\in\mathbb N$.
	
	\medbreak

	Similarly, we call $x_*:]\alpha_*,0]\to\mathbb C\backslash\{0\}$ the  solution (maximal to the left) of \eqref{eu1} with $x_*(0)=x_0^{*}$ and $\dot x_n(0)=\dot x_0^*=\lambda x_0^{*}$. The fact that $(x_0^{*},\lambda x_0^{*})\not\in\Omega$ can be equivalently rewritten as $\alpha_*\geq-T$.
	
	\medbreak

	For each $n\in\mathbb N$ we choose some $w_0^{(n)}\in\mathbb C$ with $(w_0^{(n)})^2=x_0^{(n)}$, and pick some $w_0^*\in\mathbb C$ such that $(w_0^*)^2=x_0^*$. These choices are to be made in such a way that $$w_0^{(n)}\to w_0^*\text{ as }n\to+\infty.$$ 
	
	For each natural index $n\in\mathbb N$ we also denote by $(w_n,E_n):]A_n,0]\to\mathbb C\times\mathbb R$ the Goursat transform of $(x_n,w_0^{(n)})$. It is a solution of $(\widehat{LC})$, and therefore, it can be extended to a possibly bigger interval $]\hat A_n,0]$, maximal to the left. Similarly, the Goursat transform $(w_*,E_*):]A_*,0]\to\mathbb C\times\mathbb R$ of $(x_*,w_0^*)$ will be extended to some greater interval $]\hat A_*,0]$,  maximal to the left. Notice that
	$$w_*(s)=u_*(s)\frac{w_0^*}{|w_0^*|},\qquad s\in]\hat A_*,0],$$
	where $(u_*,E_*):]\hat A_*,0]\to\mathbb R^2$ is a maximal solution of  (\ref{LC}\,) for $\delta(u):=D(ux_0^*/|x_0^*|)$. Observe also that it stays on the invariant manifold $\mathcal M$.

	\medbreak
	
	Lemma \ref{lem63} states that $\int_{\hat A_*}^0|w_*(s)|^2ds=\int_{\hat A_*}^0u_*(s)^2ds=+\infty$. Therefore, there exists  a unique point $S_*\in]A_*,0[$ such that $\int_{S_*}^0u_*(s)^2ds=T$. The remaining of this proof is devoted to show that
	\begin{equation*}
	\mathcal X(x_0^{(n)},\dot x_0^{(n)})\to u_*(S_*)^2\frac{x_0^*}{|x_0^*|} \qquad\text{ as }n\to+\infty.
	\end{equation*}
	With this goal we observe that 
	\begin{equation*}
	w_n(0)=w_0^{(n)}\to w_0^*=w_*(0),\quad \dot w_n(0)=\frac{\left|x_0^{(n)}\right|\dot x_0^{(n)}}{2w_0^{(n)}}\to \frac{\left|x_0^{*}\right|\dot x_0^{*}}{2w_0^{*}},\qquad\text{as }n\to+\infty,
	\end{equation*}
	and similarly, $E_n(0)\to E_*(0)$. Therefore, continuous dependence implies that $w_n(s)\to w_*(s)$ uniformly with respect to $s\in[s_*-1,0]$. In particular, for $n$ big enough we see that
	$\int_{S_*-1}^0|w_n(s)|^2ds>T$ and there exists some $S_n\in]S_*-1,0[$ such that $\int_{S_n}^0|w_n(s)|^2ds=T$. After possibly passing to a subsequence we may assume that the sequence $\{S_n\}\to S_{**}\in[S_*-1,0]$, and since $w_n\to w_*$ uniformly on $[S_*-1,0]$ we see that $T=\int_{S_n}^0|w_n(s)|^2ds\to\int_{S_{**}}^0|w_*(s)|^2ds$. Therefore, $\int_{S_{**}}^0|w_*(s)|^2ds=T$ and we deduce that $S_{**}=S_*$. Consequently, $\mathcal X(x_0^{(n)},\dot x_0^{(n)})=w_n(S_n)^2\to w_*(S_*)^2$, thus concluding the proof.\end{proof}

{\em Acknowledgements:} 
I thank A. Albouy for introducing me to the Lambert problem. I owe him many classical references, including \cite{Bop, Eli, Eul, Gau, Sim}.

\medbreak

I am indebted to R. Ortega for fruitful discussions leading to the present form of the paper and for pointing out several misprints in a previous version of the manuscript.

\end{document}